\DeclareMathOperator*{\esssup}{ess\ sup}
\begin{document}

 \baselineskip 16.6pt
\hfuzz=6pt

\widowpenalty=10000

\newtheorem{cl}{Claim}
\newtheorem{theorem}{Theorem}[section]
\newtheorem{proposition}[theorem]{Proposition}
\newtheorem{coro}[theorem]{Corollary}
\newtheorem{lemma}[theorem]{Lemma}
\newtheorem{definition}[theorem]{Definition}
\newtheorem{assum}{Assumption}[section]
\newtheorem{example}[theorem]{Example}
\newtheorem{remark}[theorem]{Remark}
\renewcommand{\theequation}
{\thesection.\arabic{equation}}

\def\SL{\sqrt H}

\newcommand{\mar}[1]{{\marginpar{\sffamily{\scriptsize
        #1}}}}

\newcommand{\as}[1]{{\mar{AS:#1}}}

\newcommand\R{\mathbb{R}}
\newcommand\RR{\mathbb{R}}
\newcommand\CC{\mathbb{C}}
\newcommand\NN{\mathbb{N}}
\newcommand\ZZ{\mathbb{Z}}
\newcommand\HH{\mathbb{H}}
\def\RN {\mathbb{R}^n}
\renewcommand\Re{\operatorname{Re}}
\renewcommand\Im{\operatorname{Im}}

\newcommand{\mc}{\mathcal}
\newcommand\D{\mathcal{D}}
\def\hs{\hspace{0.33cm}}
\newcommand{\la}{\alpha}
\def \l {\alpha}
\newcommand{\eps}{\tau}
\newcommand{\pl}{\partial}
\newcommand{\supp}{{\rm supp}{\hspace{.05cm}}}
\newcommand{\x}{\times}
\newcommand{\lag}{\langle}
\newcommand{\rag}{\rangle}

\newcommand\wrt{\,{\rm d}}

\title[]{$L^{p}$ estimates and weighted estimates of fractional maximal rough singular integrals on homogeneous groups}

\author{Yanping Chen}
\author{Zhijie Fan$^*$}\thanks{$*$ Corresponding Author}
\author{Ji Li}

\address{Yanping Chen, School of Mathematics and Physics, University of Science and Technology Beijing, Beijing 100083, China}
\email{yanpingch@ustb.edu.cn}

\address{Zhijie Fan, School of Mathematics and Statistics, Wuhan University, Wuhan 430072, China}
\email{ZhijieFan@whu.edu.cn}

\address{Ji Li, Department of Mathematics, Macquarie University, NSW, 2109, Australia}
\email{ji.li@mq.edu.au}

  \date{\today}

 \subjclass[2010]{42B20, 42B25,  43A85}
\keywords{Quantitative weighted bounds, singular integral operators, maximal operators, rough kernel, homogeneous groups}

\begin{abstract}
In this paper, we study the $L^{p}$ boundedness and $L^{p}(w)$ boundedness ($1<p<\infty$ and $w$ a Muckenhoupt $A_{p}$ weight) of fractional maximal singular integral operators $T_{\Omega,\alpha}^{\#}$ with homogeneous convolution kernel $\Omega(x)$ on an arbitrary homogeneous group $\mathbb H$ of dimension $\mathbb{Q}$.  We show that if $0<\alpha<\mathbb{Q}$, $\Omega\in L^{1}(\Sigma)$ and satisfies the cancellation condition of order $[\alpha]$, then for any $1<p<\infty$,
\begin{align*}
\|T_{\Omega,\alpha}^{\#}f\|_{L^{p}(\mathbb{H})}\lesssim\|\Omega\|_{L^{1}(\Sigma)}\|f\|_{L_{\alpha}^{p}(\mathbb{H})}.
\end{align*}%
where for the case $\alpha=0$, the $L^p$ boundedness of rough singular integral operator and its maximal operator were studied by Tao (\cite{Tao}) and Sato (\cite{sato}), respectively.

We also obtain a quantitative weighted bound for these operators. To be specific, if $0\leq\alpha<\mathbb{Q}$ and $\Omega$ satisfies the same cancellation condition but a stronger condition that $\Omega\in L^{q}(\Sigma)$ for some $q>\mathbb{Q}/\alpha$, then for any $1<p<\infty$ and $w\in A_{p}$,
\begin{align*}
\|T_{\Omega,\alpha}^{\#}f\|_{L^{p}(w)}\lesssim\|\Omega\|_{L^{q}(\Sigma)}\{w\}_{A_p}(w)_{A_p}\|f\|_{L_{\alpha}^{p}(w)},\ \ 1<p<\infty.
\end{align*}
\end{abstract}

\maketitle

  %\tableofcontents

\section{Introduction}
\subsection{Background}
Throughout this paper, we regard $\mathbb{H}=\mathbb{R} ^{n}$ $(n\geq 2)$ as a homogeneous group, which is a nilpotent Liegroup. It has multiplication, inverse, dilation, and norm structures
\begin{align*}
(x,y)\mapsto xy, \ \ x\mapsto x^{-1},\ \ (\lambda,x)\mapsto \lambda\circ x,\ \ x\mapsto \rho(x)
\end{align*}
for $x,y\in\HH$, $\lambda>0$. The multiplication and inverse operations are polynomials and form a group with identity 0, the dilation structure preserves the group operations and is given in coordinates by
\begin{align*}
\lambda\circ (x_{1},\ldots,x_{n}):=(\lambda^{a_{1}}x_{1},\ldots,\lambda^{a_{n}}x_{n})
\end{align*}
for some positive numbers $0<a_{1}\leq a_{2}\leq\ldots\leq a_{n}$. Without loss of generality, we shall assume that $a_{1}=1$ (see \cite{FoSt}). Moreover, $\rho(x):=\max\limits_{1\leq j\leq n}\{|x_{j}|^{1/a_{j}}\}$ is a norm associated to the dilation structure. We call $n$ the Euclidean dimension of $\HH$, and the quantity $\mathbb{Q}:=\sum_{j=1}^{n}a_{j}$ the homogeneous dimension of $\HH$, respectively.

Let $\Sigma:=\{x\in \HH:\rho(x)=1\}$ be the unit sphere on $\HH$ and $\sigma$ be the Radon measure on $\Sigma$ (see for example \cite[Proposition 1.15]{FoSt}) such that for any $f\in\mathbb{H}$,
\begin{align}\label{polar}
\int_{\HH}f(x)dx=\int_{0}^{\infty}\int_{\Sigma}f(r\circ \theta)r^{\mathbb{Q}-1}d\sigma(\theta)dr.
\end{align}
\newpage

Let $\Omega$ be a locally integrable function on $\HH \backslash \{0\}$ and it is homogeneous of degree 0 with respect to group dilation, that is, $\Omega(\lambda\circ x)=\Omega(x)$ for $x\neq 0$ and $\lambda>0$. We say that $\Omega$ satisfies the cancellation condition of order $N$ if for any  polynomial $P_{m}$ on $\HH$ of homogeneous degree $m\leq N$, we have
$$\int_{\Sigma}\Omega(\theta)P_{m}(\theta)d\sigma(\theta)=0.$$

In this paper, we will study the singular integral operators $T_{\Omega,\alpha}$ ($\alpha\geq 0$), and the maximal singular integral operators $T_{\Omega,\alpha}^{\#}$, which are formally defined by
\begin{align*}
T_{\Omega,\alpha}f(x):=\lim_{\varepsilon\rightarrow 0}T_{\Omega,\alpha,\varepsilon}f(x):=\lim_{\varepsilon\rightarrow 0}\int_{\rho(y^{-1}x)>\varepsilon}\frac{\Omega(y^{-1}x)}{\rho(y^{-1}x)^{\mathbb{Q}+\alpha}}f(y)dy,
\end{align*}
\begin{align*}
T_{\Omega,\alpha}^{\#}f(x):=\sup\limits_{\varepsilon>0}|T_{\Omega,\alpha,\varepsilon}f(x)|,
\end{align*}
where $\Omega$ satisfies the cancellation condition of order $[\alpha]$. Here we used the notation $[\alpha]$ to denote the integer part of $\alpha$.

%The study of rough singular integral operators dates back to Calder\'{o}n and Zygmund's work \cite{CZ0,CZ}.
It is well-known that for the case $\alpha=0$ and $\HH$ is an isotropic Euclidean space, Calder\'{o}n and Zygmund \cite{CZ} used the method of rotations to show that if $\Omega\in L{\rm log}^{+}L(\mathbb{S}^{n-1})$, then $T_{\Omega,0}$ is bounded on $L^{p}(\mathbb{R}^{n})$ for any $1<p<\infty$. Later, Ricci and Weiss \cite{RW} relaxed the condition to $\Omega\in H^{1}(\mathbb{S}^{n-1})$, under which this result was extended to the maximal singular integral operators $T_{\Omega,0}^{\#}$ by Fan and Pan \cite{FP}. Next, the authors in \cite{CFY,CZ2} considered the case of $\alpha\geq0$, which includes a larger class of singular integrals which are of interest in harmonic analysis and partial differential equation, such as the composition of partial derivative and Riesz transform. They established the $(L_{\alpha}^{p}(\mathbb{R}^{n}), L^{p}(\mathbb{R}^{n}))$ boundedness when $\Omega\in H^{q}(\mathbb{S}^{n-1})$ with $q=\frac{n-1}{n-1+\alpha}$. There are also many other significant progress on rough singular integral operators in the setting of Euclidean space (see for example \cite{CZ0,CD,Christ1,Christ2,DL,DR,FP, GS,Hof,LMW,Se}). However, due to the lack of fundamental tools, %many technical limitations,
the parallel results on homogeneous groups are extremely limited. Among these results, we would like to highlight that by studying the left-invariant differentiation structures of homogeneous groups and applying the iterated $TT^{*}$ method, Tao \cite{Tao} has created a pioneering work which illustrates that if $\Omega\in L\log^{+} L(\Sigma)$, then $T_{\Omega,0}$ is bounded on $L^{p}$ for any $1<p<\infty$. Inspired by his work, Sato \cite{sato} obtained the $L^{p}$ boundedness of $T_{\Omega,0}^{\#}$ under the same condition.
%
%Parallel to the results in the setting of Euclidean space (\cite{CFY,CZ}),
Thus, it is natural to ask whether one can obtain $(L_{\alpha}^{p}(\HH), L^{p}(\HH))$ boundedness of the %a wide class of
operators $T_{\Omega,\alpha}$ and $T^{\#}_{\Omega,\alpha}$ for $\alpha>0$, where $L_{\alpha}^{p}(\HH)$ is the Sobolev space on $\HH$ defined in \eqref{sobolevde} with $w=1$.

Another motivation of this research comes from the investigation of fractional-order partial differential equation. Caffarelli and Silvestre \cite{CSpde} constructed a fractional differentiation from an extension problem to the upper half space for a specific elliptic partial differential equation. As a continuation of their work, Chamorro and  Jarr\'{i}n \cite{CJpde} further investigated this problem on general nilpotent Lie group. In order to investigate fractional-order partial differential equations more deeply on homogeneous groups, the first aim of our paper is to establish the $(L_{\alpha}^{p}(\HH), L^{p}(\HH))$-boundedness of the operators $T^{\#}_{\Omega,\alpha}$ since it connects closely to the fractional Laplacian operator and the composition of fractional derivative with Riesz transform.
%
%Noting that our targeted operators connect closely to the fractional Laplacian operator and the composition of fractional derivative with Riesz transform, we expect that our result can provide some potential applications to the investigation of fractional-order partial differential equations on homogeneous groups (see for example ???).
%
%The first purpose of this article is to consider this question. \textcolor[rgb]{1.00,0.00,0.00}{Let $L_{\alpha}^{p}(\HH)$ be the Sobolev space defined in \eqref{sobolevde} with $w=1$}, then our result can be stated as follows.
%The first aim of our paper is to establish the $(L_{\alpha}^{p}(\HH), L^{p}(\HH))$-boundedness of the operators $T^{\#}_{\Omega,\alpha}$.

\begin{theorem}\label{main1}
Let $0<\alpha<\mathbb{Q}$. Suppose that $\Omega\in L^{1}(\Sigma)$ and satisfies the cancellation condition of order $[\alpha]$, then for any $1<p<\infty$,
\begin{align*}
\|T_{\Omega,\alpha}^{\#}f\|_{L^{p}(\HH)}\leq C_{\mathbb{Q},\alpha,p}\|\Omega\|_{L^{1}(\Sigma)}\|f\|_{L_{\alpha}^{p}(\HH)}
\end{align*}
for some constant $C_{\mathbb{Q},\alpha,p}$ independent of $\Omega$.
\end{theorem}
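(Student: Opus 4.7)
The plan is to reduce the maximal operator $T^{\#}_{\Omega,\alpha}$ to the non-maximal $T_{\Omega,\alpha}$ by a Cotlar-type pointwise inequality (in the spirit of Sato's \cite{sato} passage from Tao's result to the maximal operator at $\alpha=0$), and then to establish $T_{\Omega,\alpha}:L^{p}_{\alpha}(\HH)\to L^{p}(\HH)$ via a dyadic decomposition of the kernel that exploits the cancellation of order $[\alpha]$. Pick a smooth radial cutoff $\phi$ on $\HH$ supported in $\{1/2\leq\rho\leq 2\}$ with $\sum_{k\in\ZZ}\phi(2^{-k}\circ x)=1$ on $\HH\setminus\{0\}$, and set $K_{k}(x):=\phi(2^{-k}\circ x)\,\Omega(x)\rho(x)^{-\mathbb{Q}-\alpha}$, so that $T_{\Omega,\alpha}f=\sum_{k}f\ast K_{k}$. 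Each piece satisfies $\supp K_{k}\subset\{\rho\sim 2^{k}\}$ and $\|K_{k}\|_{L^{1}(\HH)}\lesssim 2^{-k\alpha}\|\Omega\|_{L^{1}(\Sigma)}$; moreover, by the polar formula \eqref{polar} together with the homogeneity identity $(r\circ\theta)^{\beta}=r^{|\beta|_{\la}}\theta^{\beta}$ for $|\beta|_{\la}:=\sum_{j}\beta_{j}\alpha_{j}$, the cancellation hypothesis yields $\int_{\HH}K_{k}(z)\,z^{\beta}\,dz=0$ whenever $|\beta|_{\la}\leq[\alpha]$.

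I would handle each dyadic piece by two complementary estimates. Young's inequality gives the trivial bound $\|f\ast K_{k}\|_{L^{p}}\lesssim 2^{-k\alpha}\|\Omega\|_{L^{1}(\Sigma)}\|f\|_{L^{p}}$, which is summable for $k\geq 0$. For $k<0$ I would subtract the group Taylor polynomial $P^{x}_{[\alpha]}(z)$ of $z\mapsto f(xz^{-1})$ at $z=0$ built from the left-invariant derivatives $X^{\beta}f(x)$ with $|\beta|_{\la}\leq[\alpha]$, and use the vanishing of moments above to write
\begin{align*}
(f\ast K_{k})(x)=\int_{\HH}K_{k}(z)\bigl[f(xz^{-1})-P^{x}_{[\alpha]}(z)\bigr]\,dz.
\end{align*}
A fractional Taylor-type remainder bound on $\HH$ controls the bracket pointwise by $\rho(z)^{\alpha+\epsilon}$ times a maximal function of $(-\mc{L})^{(\alpha+\epsilon)/2}f$ for some small $\epsilon>0$, producing the decay $\|f\ast K_{k}\|_{L^{p}}\lesssim 2^{k\epsilon}\|\Omega\|_{L^{1}(\Sigma)}\|f\|_{L^{p}_{\alpha+\epsilon}}$. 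To bring the right-hand side back to the target Sobolev exponent $\alpha$, I would insert a Littlewood--Paley decomposition $f=\sum_{l}\Delta_{l}f$ and balance the two estimates according to the sign of $k+l$, summing via an almost-orthogonality argument in the spirit of Tao's \cite{Tao} iterated $TT^{\ast}$ method. The maximal version then follows from the Cotlar inequality, which dominates $T^{\#}_{\Omega,\alpha}f$ by $M(T_{\Omega,\alpha}f)$ plus a fractional rough Hardy--Littlewood-type maximal of $f$.

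The main obstacle is the small-scale ($k<0$) cancellation step: one has to produce a usable fractional Taylor-type remainder on a non-abelian homogeneous group when $f$ lies only in the Sobolev space $L^{p}_{\alpha}(\HH)$, and then carry out the Littlewood--Paley/almost-orthogonality balancing to absorb the auxiliary derivative gain $\epsilon$ without losing a power. Since Fourier analysis is unavailable on $\HH$, the argument must proceed by real-variable tools: polar decomposition \eqref{polar}, the correct homogeneous degree $|\beta|_{\la}$ of monomials, a left-invariant Taylor expansion of order $[\alpha]$, maximal bounds on fractional differences of $f$, and $TT^{\ast}$-type almost-orthogonality between the pieces $\{K_{k}\}$. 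The non-integer case $\alpha\notin\ZZ$, where the cancellation order $[\alpha]$ is strictly less than $\alpha$, is the most delicate because the summation in $k$ must be arranged so as to exactly reproduce the Sobolev exponent $\alpha$ in the final estimate.
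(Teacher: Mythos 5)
The central reduction in your plan --- a Cotlar-type pointwise inequality dominating $T^{\#}_{\Omega,\alpha}f$ by $M(T_{\Omega,\alpha}f)$ plus a rough fractional maximal function of $f$ --- is not available under the hypothesis $\Omega\in L^{1}(\Sigma)$. The Cotlar inequality needs pointwise size and smoothness (H\"ormander/Dini) control of the kernel, which $\Omega(x)\rho(x)^{-\mathbb{Q}-\alpha}$ with $\Omega$ merely integrable on $\Sigma$ does not have; this is exactly why, already at $\alpha=0$, Sato's bound for $T^{\#}_{\Omega,0}$ is not a corollary of Tao's bound for $T_{\Omega,0}$ but required a separate argument. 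Moreover, even granting such a domination, the ``rough fractional maximal'' error term is itself one of the main difficulties: with $\Omega\in L^{1}(\Sigma)$ only, $\sup_{k,t}|f\ast R^{\alpha}\ast B_{k,t}^{\alpha}\Omega|$ cannot be majorized pointwise by $Mf$ (that pointwise bound is what the paper uses in the weighted part, and it needs $\Omega\in L^{q}$ with $q>\mathbb{Q}/\alpha$); its $L^{p}$ boundedness occupies all of Section \ref{nondyadicsection}, via the square-function trick in the truncation parameter (the $\int_{1}^{2}|\frac{d}{ds}\cdot|^{2}ds$ device of Jones--Seeger--Wright), Khinchin's inequality, the $L^{2}$ decay of Proposition \ref{keypro} and Lemma \ref{ckt}, a uniform H\"ormander condition, and interpolation. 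The paper instead splits $T^{\#}_{\Omega,\alpha}\leq M_{\Omega,\alpha}+\sup_{k}|T_{\Omega,\alpha}^{k}|$ as in \eqref{maximalcontrol} and treats each maximal piece directly with that machinery; only the single term ${\rm I_{1}}$ is controlled by $M(T_{\Omega,\alpha}(-\Delta_{\HH})^{-\alpha/2}f)$, while ${\rm I_{2}}$ and ${\rm I_{3}}$ need separate cancellation and almost-orthogonality arguments.

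There is a second gap in the non-maximal part. Your two estimates for $f\ast K_{k}$, together with ``almost orthogonality in the spirit of the iterated $TT^{*}$ method,'' only yield $L^{2}$ information; since no Fourier transform is available on $\HH$, the passage from $L^{2}$ decay of the dyadic pieces to $L^{p}$ decay, $p\neq2$, is the real crux. The paper achieves it by grouping the pieces into $\tilde{T}_{j}^{\alpha}$, proving the $2^{-\tau j}$ bound on $L^{2}$ by Cotlar--Knapp--Stein and a weak $(1,1)$ bound of size $O(1+j)$ by verifying the H\"ormander condition for the composite kernels $\sum_{k}\Delta[2^{k-j}]\phi\ast R^{\alpha}\ast A_{k}^{\alpha}K_{\alpha}^{0}$ (Lemma \ref{hormander}), and then interpolating; your proposal contains no substitute for this weak-type ingredient. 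In addition, expanding $f$ itself to order $[\alpha]$ with a remainder of size $\rho(z)^{\alpha+\epsilon}$ uses more regularity than $f\in L_{\alpha}^{p}(\HH)$ provides, and the promised Littlewood--Paley ``balancing'' to remove the $\epsilon$-loss is precisely the place where the missing quantitative $L^{p}$ (not just $L^{2}$) decay would be needed, so the argument as sketched does not close.
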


%-----------------------------------------------------------------------
%-----------------------------------------------------------------------
%-----------------------------------------------------------------------
%-----------------------------------------------------------------------
%-----------------------------------------------------------------------

The second aim  of our paper is to establish the quantitative weight inequalities for $T_{\Omega,\alpha}^{\#}$.

In the Euclidean setting, after the establishment of the sharp weight inequalities for Ahlfors-Beurling operator by Petermichl and Volberg \cite{PV}, for Hilbert transform and Riesz transform by Petermichl \cite{Petermichl1,Petermichl2} and for general Calder\'{o}n--Zygmund operators by Hyt\"{o}nen  \cite{H} (see also \cite{La,Lerner1,Lerner2}), Hyt\"{o}nen--Roncal--Tapiola \cite{HRT} first quantitatively obtained the weighted bounds for $T_{\Omega,0}$ (see also \cite{CCDO,Ler1}). Later, this result was extended to the maximal singular integrals $T_{\Omega,0}^{\#}$ by Di Plinio, Hyt\"{o}nen and Li \cite{DHL} and Lerner \cite{Lerner4} via sparse domination, which gives
\begin{align}\label{Q W bd for Tsharp}
\|T_{\Omega,0}^{\#}f\|_{L^{p}(w)}\leq C_{n,p}\|\Omega\|_{L^{\infty}(\mathbb{S}^{n-1})}\{w\}_{A_{p}}(w)_{A_{p}}\|f\|_{L^{p}(w)},
\end{align}
where $w$ is an $A_p$ weight, $\{w\}_{A_{p}}$ and $(w)_{A_{p}}$ are the quantitative constants with respect to $w$ and $L^{p}(w)$ is the weighted $L^p$ space (all definitions are provided in Section \ref{weisection}).
However, it is still unclear that whether a quantitative weight bound for $T_{\Omega,0}^{\#}$ can be obtained  on homogeneous groups. Moreover, there is no weighted result for the case $\alpha>0$ even non-quantitative one in the special case of $\mathbb{R}^{n}$. To fill in these gaps, we conclude the following result.

%
%proved that if $\Omega\in L^{\infty}(\mathbb{S}^{n-1})$, then for any $1<p<\infty$ and $w\in A_{p}$,
%\begin{align}\label{Q W bd for T}
%\|T_{\Omega,0}f\|_{L^{p}(w)}\leq C_{n,p}\|\Omega\|_{L^{\infty}}\{w\}_{A_{p}}(w)_{A_{p}}\|f\|_{L^{p}(w)}.
%\end{align}
%Recently, \ref{Q W bd for T}
%
%The second main result is stated as follows.

\begin{theorem}\label{main2}
Let $0\leq \alpha<\mathbb{Q}$ and $q>\mathbb{Q}/\alpha$. Suppose that $\Omega\in L^{q}(\Sigma)$ and satisfies the cancellation condition of order $[\alpha]$, then for any $1<p<\infty$ and $w\in A_{p}$,
\begin{align*}
\|T_{\Omega,\alpha}^{\#}f\|_{L^{p}(w)}\leq C_{\mathbb{Q},\alpha,p,q}\|\Omega\|_{L^{q}(\Sigma)}\{w\}_{A_p}(w)_{A_p}\|f\|_{L_{\alpha}^{p}(w)}
\end{align*}
for some constant $C_{\mathbb{Q},\alpha,p,q}$ independent of $\Omega$ and $w$.
\end{theorem}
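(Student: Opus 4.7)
The plan is to establish a sparse-form domination for $T_{\Omega,\alpha}^{\#}$ adapted to a dyadic system on $\HH$ and then extract the quantitative $A_p$ bound from it via the now-standard Lerner--P\'{e}rez transfer principle.

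First, I would reduce the $L^p_\alpha(w)$ endpoint to an $L^p(w)$ one by composing with a fractional integral. Writing $f = I_\alpha h$, where $I_\alpha$ is the Riesz potential on $\HH$ of order $\alpha$, one has $\|f\|_{L^p_\alpha(w)}\approx \|h\|_{L^p(w)}$, so the theorem becomes the assertion that $T_{\Omega,\alpha}^{\#}\circ I_\alpha$ enjoys the stated quantitative $A_p$ bound on $L^p(w)$. The composite kernel is of convolution type with singularity of Calder\'{o}n--Zygmund order $\mathbb{Q}$ at the origin, yet retains the rough angular factor $\Omega$, placing us back in a rough maximal singular integral problem on a homogeneous group.

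Next, I would decompose $T_{\Omega,\alpha}=\sum_{k\in\ZZ}T_k$ dyadically with $T_k$ supported in $\rho(x)\sim 2^k$, and exploit the cancellation condition of order $[\alpha]$ together with the iterated $TT^{*}$ method of Tao (\cite{Tao}) to derive, for each scale and each $L^q$-normalized angular profile, both an $L^2$ decay with geometric gain and an $L^p$ square-function bound in the spirit of Theorem \ref{main1}. The hypothesis $q>\mathbb{Q}/\alpha$ enters through a H\"{o}lder-type argument, converting the extra singularity $\rho^{-(\mathbb{Q}+\alpha)}$ of the fractional kernel into an integrable average on each annulus. With these Calder\'{o}n--Zygmund-type ingredients in hand, I would implement the sparse-domination scheme of Lerner (\cite{Lerner4}) and Di Plinio--Hyt\"{o}nen--Li (\cite{DHL}) on a Hyt\"{o}nen--Kairema-type dyadic grid for $\HH$, producing a sparse family $\mathcal{S}$ such that, for good $f, g$,
\[
|\lag T_{\Omega,\alpha}^{\#}I_\alpha f, g\rag|\le C\,\|\Omega\|_{L^q(\Sigma)}\sum_{Q\in\mathcal{S}}|Q|\,\lag f\rag_{r,Q}\,\lag g\rag_{r',Q},
\]
for some $r$ only marginally larger than $1$ depending on $q,\mathbb{Q},\alpha$. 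The supremum over $\varepsilon$ is controlled by a Cotlar-type inequality in the spirit of Sato (\cite{sato}), comparing $T_{\Omega,\alpha}^{\#}$ to $T_{\Omega,\alpha}$ plus a controlled fractional maximal function. Finally, the sparse bound is converted into the quantitative $\{w\}_{A_p}(w)_{A_p}$ bound by the Hyt\"{o}nen--P\'{e}rez two-weight testing framework, which is by now routine once an $(r,r')$-sparse form is in place.

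The main obstacle is the sparse domination step itself. Without a Fourier transform on a general homogeneous group, the Calder\'{o}n--Zygmund-type decay required at each dyadic scale must be extracted entirely from Tao's iterated $TT^{*}$ scheme, which is technically involved and must be made robust enough to tolerate the additional singularity $\rho^{-(\mathbb{Q}+\alpha)}$ and the sharp cancellation constraint of order $[\alpha]$. Equally delicate is calibrating the exponent $r$ in the sparse form against the condition $\Omega\in L^q(\Sigma)$ with $q>\mathbb{Q}/\alpha$ so as to attain exactly the $\{w\}_{A_p}(w)_{A_p}$ dependence, and maintaining this calibration uniformly in the truncation parameter $\varepsilon$ for the maximal operator.
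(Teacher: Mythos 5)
Your outline is essentially an attempt to port the Di Plinio--Hyt\"onen--Li route ($(r,r')$-sparse domination of the rough maximal operator, then a ``routine'' transfer to weights) to $\HH$, whereas the paper follows the Hyt\"onen--Roncal--Tapiola/\cite{FL} scheme: it never proves a sparse form for the full rough maximal operator. Instead it splits the smooth truncations as in \eqref{iiiiii}, dominates the terms ${\rm II}$ and $M_{\Omega,\alpha}(-\Delta_{\HH})^{-\alpha/2}$ pointwise by $Mf$ (this is where $q>\mathbb{Q}/\alpha$ and H\"older enter, exactly as you guessed), proves for each lacunary frequency-localized piece $\tilde{T}_{j}^{\alpha,N}$ (with $N(j)=2^{j}$) a Dini--Calder\'on--Zygmund estimate with $\|\omega_j\|_{\rm Dini}\lesssim 1+N(j)$ (Lemmas \ref{cal}, \ref{caal}), gets a $(1,1)$-sparse bound with constant $(1+N(j))\{w\}_{A_p}$ via the grand maximal operator (Lemma \ref{twoweak}, Proposition \ref{dominnn}), and then recovers summability and the factor $(w)_{A_p}$ by Stein--Weiss interpolation with change of measure between this weighted bound and the unweighted $L^{2}$ decay $2^{-\tau N(j-1)}$ coming from the Cotlar--Knapp--Stein estimates; the term ${\rm III}$ is handled the same way in the parameter $s$.

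There are two genuine gaps in your plan as stated. First, a sparse form $\sum_{Q}|Q|\langle f\rangle_{r,Q}\langle g\rangle_{r',Q}$ with a \emph{fixed} $r>1$ depending only on $q,\mathbb{Q},\alpha$ cannot yield the theorem: such a form is bounded on $L^{p}(w)$ only for weights in a strictly smaller class than $A_{p}$ (an $A_{p/r}$/reverse-H\"older type restriction), and it does not produce the constant $\{w\}_{A_p}(w)_{A_p}$. To get the stated bound you need either a family of sparse bounds with exponents tending to $1$ and with the blow-up of the constant tracked explicitly, followed by an optimization in terms of $(w)_{A_p}$ (the DHL mechanism, which in $\RN$ rests on Fourier/Plancherel estimates you would have to rebuild from Tao's iterated $TT^{*}$ -- this is precisely the step the paper avoids), or the change-of-measure interpolation the paper uses; your proposal identifies neither mechanism, and ``routine once an $(r,r')$-sparse form is in place'' is not accurate here. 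Second, the Cotlar-type comparison ``$T^{\#}_{\Omega,\alpha}\lesssim$ $T_{\Omega,\alpha}$ plus a maximal function'' is not available for merely $L^{q}$ kernels in a form that preserves the quantitative constant: if one uses the crude bound $\sup_k|T_{\Omega,\alpha}(-\Delta_{\HH})^{-\alpha/2}f\ast\Delta[2^k]\phi|\lesssim M\big(T_{\Omega,\alpha}(-\Delta_{\HH})^{-\alpha/2}f\big)$ in the weighted setting, one pays an extra factor $\{w\}_{A_p}$, giving $\{w\}_{A_p}^{2}(w)_{A_p}$ rather than $\{w\}_{A_p}(w)_{A_p}$. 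The paper circumvents this by proving the weighted bound for the smoothed maximal operator directly (Proposition \ref{discre}) and by treating the discrepancy terms through the $\delta_{0}-\Delta[2^{k}]\phi$ decomposition with its own $L^{2}$ decay \eqref{makey} and weighted no-decay bound \eqref{4546}; any successful variant of your plan would need an analogous device.
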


\noindent Here  % (definition provided in Section 2.2),
$L_{\alpha}^{p}(w)$\ ($1<p<\infty$, $\alpha\geq 0$, $w\in A_{p} $)  is the homogeneous weighted Sobolev space defined by
\begin{align}\label{sobolevde}
\|f\|_{L_{\alpha}^{p}(w)}:=\left(\int_{\HH}|(-\Delta_{\HH})^{\alpha/2}f(x)|^{p}w(x)dx\right)^{1/p},
\end{align}
where the definition of $\Delta_{\HH}$ is defined in Section \ref{prehomo} and $(-\Delta_{\HH})^{\alpha/2}$ is defined spectrally.

%\textcolor[rgb]{1.00,0.00,0.00}{\begin{coro}\label{coro2}
%Let $0\leq \alpha<\mathbb{Q}$ and $q>\mathbb{Q}/\alpha$. Suppose that $\Omega\in L^{q}(\Sigma)$ and satisfies the cancellation condition of order $[\alpha]$, then for any $1<p<\infty$ and $w\in A_{p}$,
%\begin{align*}
%\|T_{\Omega,\alpha}f\|_{L^{p}(w)}\leq C_{\mathbb{Q},\alpha,p,q}\|\Omega\|_{L^{q}(\Sigma)}\{w\}_{A_p}(w)_{A_p}\|f\|_{L_{\alpha}^{p}(w)}.
%\end{align*}
%for some constant $C_{\mathbb{Q},\alpha,p,q}$ independent of $\Omega$ and $w$.
%\end{coro}}

\subsection{Comparisons with previous results}

%\begin{remark}
%Parallel to the results in the setting of Euclidean space (\cite{CFY,CZ}), \textcolor[rgb]{1.00,0.00,0.00}{it is natural to ask whether the size condition on $\Omega$ can be weaken to $\Omega$ belonging to the Hardy space  $H^{\frac{\mathbb{Q}-1}{\mathbb{Q}-1+\alpha}}(\Sigma)$ for $\alpha\geq 0$.} We point out that it is also possible to expect that the approaches in \cite{CFY,CZ} may be applied to our setting. However, to the best our knowledge, some of their tools in $\mathbb{R}^{n}$ are not at hand for general homogeneous groups. Instead, we provide a new frame of proof (see Section \ref{strategy}) to show Theorem \ref{main1}.
%\end{remark}

Table 1  highlights our contributions in the $L^{p}$ boundedness and the quantitative $L^{p}(w)$ boundedness for the operators $T_{\Omega,\alpha}$ and $T_{\Omega,\alpha}^{\#}$ in $\mathbb R^n$ and $\HH$ via a comparison of known results under different conditions. Among these results, we would like to mention that the $L^{p}$ boundedness of $T_{\Omega,\alpha}$ and $T_{\Omega,\alpha}^{\#}$ ($0<\alpha<\mathbb{Q}$) are new even when $\Omega\in L^{\infty}$. Unfortunately, whether one can weaken the size condition of $\Omega$ to certain Hardy space is still open even when $\alpha=0$. Furthermore, for the weighted setting, the weight bound $\{w\}_{A_p}(w)_{A_p}$ we obtained is consistent with that obtained in \cite{DHL}. It is still open that whether this is sharp, but it is the best known quantitative result for this class of operators.
\begin{table}[h]
\begin{center}
\begin{tabular}{|c|c|c|c|c|c|}
\hline
\multicolumn{2}{|c|}{ \multirow{2}*{\diagbox[width=7.7em]{Operator}{Setting}} }&\multicolumn{2}{c|}{$L^{p}$ boundedness}&\multicolumn{2}{c|}{Quantitative $L^{p}(w)$ boundedness}\\
\cline{3-6}
\multicolumn{2}{|c|}{}&  $\mathbb{R}^{n}$ & $\HH$ & $\mathbb{R}^{n}$ & $\HH$ \\
\hline
\multirow{2}*{$T_{\Omega,\alpha}$}&$\alpha=0$&\cite{RW}:\ $\Omega\in H^{1}$&\cite{Tao}$:\ \Omega\in L\log^{+} L$&\cite{HRT} :\ $\Omega\in L^{\infty}$&\color{brown}{\rm Thm}\ref{main2}:\ $\Omega\in L^{\infty}$\\
\cline{2-6}
&$0<\alpha<\mathbb{Q}$&\cite{CZ2}:\ $\Omega\in H^{\frac{n-1}{n-1+\alpha}}$&$\color{brown}{\rm Thm}\ref{main1}:\ \Omega\in L^{1}$&\color{brown}{\rm Thm}\ref{main2}:\ $\Omega\in L^{q/\alpha}$&\color{brown}{\rm Thm}\ref{main2}:\ $\Omega\in L^{q/\alpha}$\\
\hline
\multirow{2}*{$T_{\Omega,\alpha}^{\#}$}&$\alpha=0$&\cite{FP}:\ $\Omega\in H^{1}$&\cite{sato}$:\ \Omega\in L\log^{+} L$&\cite{DHL}:\ $\Omega\in L^{\infty}$&\color{brown}{\rm Thm}\ref{main2}:\ $\Omega\in L^{\infty}$\\
\cline{2-6}
&$0<\alpha<\mathbb{Q}$&\cite{CZ2}:\ $\Omega\in H^{\frac{n-1}{n-1+\alpha}}$&$\color{brown}{\rm Thm}\ref{main1}:\ \Omega\in L^{1}$&\color{brown}{\rm Thm}\ref{main2}:\ $\Omega\in L^{q/\alpha}$&\color{brown}{\rm Thm}\ref{main2}:\ $\Omega\in L^{q/\alpha}$\\
\hline
\end{tabular}
\medskip
\caption{Highlight of our contributions}
\end{center}
\end{table}
\subsection{Difficulties and strategy of our proof}\label{strategy}
%Due to the generality of the underlying space, many techniques in the setting of Euclidean space cannot be applied directly. In particular, some difficulties occur: previous references rely on Fourier analysis heavily; the order of convolution on homogeneous groups cannot be exchanged in non-Abelian group; the topology degree is not equal to the homogeneous degree in general. The $L^{p}$ boundedness of the maximal function along certain curves with variable coefficients on homogeneous groups is not at hand to the our best of knowledge. To overcome these difficulties, we provide the following methods:
Due to the generality of the underlying space, many techniques in the Euclidean setting cannot be applied directly. In particular, some difficulties occur:

$\bullet$ We cannot apply Fourier transform and Plancherel's theorem as effectively as in \cite{DR};

$\bullet$ The order of convolution cannot be exchanged on non-Abelian homogeneous groups;

$\bullet$ Generally, the topology degree and homogeneous degree in Taylor's inequality are not equal;

$\bullet$ $L^{p}$ boundedness of the directional maximal function on homogeneous groups is not at hand;

$\bullet$ Due to the non-isotropic property of homogeneous groups, the method of rotation cannot be applied directly.

To overcome these difficulties, our main ideas are the following:

(1) Either in the unweighted setting or weighted setting, the first step is to decompose our maximal operator into a non-dyadic maximal one and a dyadic one.

(2) The weighted estimate of the non-dyadic maximal operator can be deduced from that of Hardy-Littlewood maximal function directly, whereas under the weaker assumption $\Omega\in L^1(\Sigma)$, the $L^p$ estimate of this operator is technical. Our strategy is to reduce the problem to a kind of mixed $L^2$ norm estimates of certain maximal singular integral operators with smooth kernels by applying Gagliardo-Nirenberg inequality.

(3) We apply Cotlar-Knapp-Stein Lemma and some tricks of geometric means instead of Fourier transform, Plancherel's theorem and the exchange of convolution order to show an abstract $L^{2}$ decay lemma:
: suppose that $\{\mu_{j}^{\alpha}\}_{j\in\mathbb{Z}}$ is a family of Borel measures on $\HH$ such that its behavior like $\frac{\Omega(x)}{\rho(x)^{\mathbb{Q}+\alpha}}\chi_{\rho(x)\sim 2^{j}}$, then there exists a constant $\tau>0$ such that for any $j\in\mathbb{Z}$, $0<\alpha<\mathbb{Q}$ and $q>1$,
\begin{align*}
\|G_{j}^{\alpha}(t)f\|_{2}\lesssim2^{-\tau |j|}\|\Omega\|_{L^{1}(\Sigma)}\|f\|_{2},\ \
\|G_{j}^{0}(t)f\|_{2}\lesssim2^{-\tau |j|}\|\Omega\|_{L^{q}(\Sigma)}\|f\|_{2},
\end{align*}
where $G_{j}^{\alpha}(t)f$ is defined in \eqref{equa} and \eqref{equa001}.

(4) We use Cotlar's decomposition together with Littlewood-Paley theory to decompose operators with rough kernel into summation of operators with smooth kernel. Then we adapt the above $L^2$ decay Lemma together with Khinchin's inequality to establish the $L^2$ decay estimates of decomposed parts.

For the convenience of readers who are interested in our proof of framework, we provide two figures in the following. Here in Figure 1, the definitions of $M_{\Omega,\alpha}$, $T_{\Omega,\alpha}^{k}$, $V_{k,j,t}^{\alpha}$, ${\rm I_{1}}$, ${\rm I_{2}}$, ${\rm I_{3}}$, $G_{k,s,j}^{\alpha}$, $\tilde{T}_{j}^{\alpha}$ can be found in \eqref{defi1}, \eqref{defi2}, \eqref{vkj}, \eqref{IIIIII}, \eqref{IIIIII}, \eqref{IIIIII}, \eqref{gksj}, \eqref{tj1}, respectively. In Figure 2, the definitions of $\tilde{T}_{\Omega,\alpha}^{k}$, ${\rm I}$, ${\rm II}$, ${\rm III}$, $\tilde{T}_{j}^{\alpha,N}$, $R^{\alpha}$, $A_{k+s}^{\alpha}K_{\alpha}^{0}$, $\Delta[2^{k}]\phi$  can be found in \eqref{tildeT},  \eqref{iiiiii}, \eqref{iiiiii}, \eqref{iiiiii}, \eqref{b222}, \eqref{ra}, \eqref{akk0}, \eqref{deltamap}, respectively.
%Next,  inspired by \cite{DR}, we implement a new proof for Theorem \ref{main1}, which bypasses the use of rotation and directional maximal function, and then modify the ideas in \cite{DHL} to show Theorem \ref{main2}, which can be sketched in Figures 1 and 2, respectively.
\newpage
%\textbf{Overall:} %First of all, Theorem \ref{main1} (resp. Theorem  \ref{main2}) can be reduced to showing the $L^{p}$ (resp. $L^{p}(w)$)  boundedness of the non-dyadic maximal function $M_{\Omega,\alpha}(-\Delta_{\HH})^{-\alpha/2}f(x)$ and the dyadic maximal function $\sup\limits_{k\in\mathbb{Z}}|T_{\Omega,\alpha}^{k}(-\Delta_{\HH})^{-\alpha/2}f(x)|$, where $M_{\Omega,\alpha}f(x)$ and $T_{\Omega,\alpha}^{k}f(x)$ are defined in \eqref{defi1} and \eqref{defi2}.

%
%We apply two slightly different types of Littlewood-Paley decomposition to the operator $T_{\Omega,\alpha}(-\Delta_{\HH})^{-\alpha/2}$ such that $T_{\Omega,\alpha}(-\Delta_{\HH})^{-\alpha/2}=\sum_{j\geq 0}\tilde{T}_{j}^{\alpha}$

\textbf{Frame of proof of Theorem \ref{main1}:}

\begin{figure}[h]
\centering
\begin{tikzpicture}[>=stealth,transform shape,scale=.8]
\begin{scope}
%ÏÂÃæµÚÒ»²ã
\node (supsupL1) {$\Big\|\sup\limits_{t\in[1,2)}\sup\limits_{k\in\mathbb{Z}}|V_{k,j,t}^{\alpha}(\cdot)|\Big\|_{L^{1}\rightarrow L^{1,\infty}}$};
\node[right] (supsup22) at (supsupL1.east) {$\Big\|\sup\limits_{t\in[1,2)}\sup\limits_{k\in\mathbb{Z}}|V_{k,j,t}^{\alpha}(\cdot)|\Big\|_{2\rightarrow 2}$};
\node[right] (TL1) at (supsup22.east) {$\|\tilde{T}_{j}^{\alpha}\|_{L^{1}\rightarrow L^{1,\infty}}$};
\node[right] (T22) at (TL1.east) {$\|\tilde{T}_{j}^{\alpha}\|_{2\rightarrow 2}$};
\node[right] (supL1) at (T22.east) {$\Big\|\sup\limits_{k\in\mathbb{Z}}|G_{k,s,j}^{\alpha}(\cdot)|\Big\|_{L^{1}\rightarrow L^{1,\infty}}$};
\node[right] (sup22) at (supL1.east) {$\Big\|\sup\limits_{k\in\mathbb{Z}}|G_{k,s,j}^{\alpha}(\cdot)|\Big\|_{2\rightarrow 2}$};
%I1ÁÐ
\draw[decorate,decoration={brace}] (TL1.north) -- coordinate (TL122) (T22.north);
\draw[->,double] (TL122)+(0,.1) -- node[right] {Interpolation} ++(0,1) coordinate (Tjpp south);
\node[above] (Tjpp) at (Tjpp south) {$\|\tilde{T}_{j}^{\alpha}\|_{p\rightarrow p}$};
\draw[->,double] (Tjpp.north) -- node[right] {Decomposition} ++(0,1) coordinate (Tpp south);
\node[above] (Tpp) at (Tpp south) {$\big\|T_{\Omega,\alpha}(-\Delta_{\HH})^{-\alpha/2}(\cdot)\big\|_{p\rightarrow p}$};
\draw[->,double] (Tpp.north) -- ++(0,1) coordinate (I1 south);
\node[above] (I1) at (I1 south) {$\rm I_{1}$};
%I3ÁÐ
\draw[decorate,decoration={brace}] (supL1.north) -- coordinate (supL122) (sup22.north);
\node (I3) at (I1-|supL122) {$\rm I_{3}$};
\draw[<-,double] (I3.south) -- ++(0,-1) coordinate (suppp north);
\node[below] (suppp) at (suppp north) {$\Big\|\sup\limits_{k\in\mathbb{Z}}|G_{k,s,j}^{\alpha}(\cdot)|\Big\|_{p\rightarrow p}$};
\draw[->,double] (supL122)+(0,.1) -- node[right] {Interpolation} (suppp);
%I2ÁÐ
\draw[decorate,decoration={brace}] (I1.north) -- coordinate (I2 north) (I3.north);
\node[below] (I2) at (I2 north) {$\rm I_{2}$};
\draw[<-,double] (I2.south) -- ++(0,-1) node[below] {$\|M\|_{p\rightarrow p}$};
\node[above] (supTpp) at (I2 north) {$\Big\|\sup\limits_{k\in\mathbb{Z}}|T_{\Omega,\alpha}^{k}(-\Delta_{\HH})^{-\alpha/2}(\cdot)|\Big\|_{p\rightarrow p}$};
%×ó±ßÁÐ
\draw[decorate,decoration={brace}] (supsupL1.north) -- coordinate (supsupL122) (supsup22.north);
\node[below] (Mpp) at (supTpp.north-|supsupL122) {$\|M_{\Omega,\alpha}(-\Delta_{\HH})^{-\alpha/2}\|_{p\rightarrow p}$};
\node (supsuppp) at (Mpp|-Tpp) {$\Big\|\sup\limits_{t\in[1,2)}\sup\limits_{k\in\mathbb{Z}}|V_{k,j,t}^{\alpha}(\cdot)|\Big\|_{p\rightarrow p}$};
\draw[->,double] (supsuppp) -- node[right] {Decomposition} (Mpp);
\draw[->,double] (supsupL122)+(0,.1) -- node[right] {Interpolation} (supsuppp);
%×îÉÏÃæÒ»²ã
\draw[decorate,decoration={brace}] (Mpp.north) -- coordinate (MsupTpp) (supTpp.north);
\draw[->,double] (MsupTpp)+(0,.1) --node[right] {Decomposition} ++(0,1) node[above] {$\|T_{\Omega,\alpha}^{\#}\|_{L_{\alpha}^{p}\rightarrow L^{p}}$};
\end{scope}
%%%%%%%%%%%%%%%%%%%%%%%%%%%%%%%%%%%%%%%%%%%%%%%%%%
%×îÏÂÃæÁœžöœáµã
\node[below=1cm] (VHc) at (supsup22.south) {Verify H\"{o}rmander condition};
\node[below=1cm] (LKi) at (supL1.south) {$L^{2}$ decay estimate $\oplus$ Khinchin's inequality};
\draw[->,double] (VHc) -- (supsupL1.south);
\draw[->,double] (VHc) -- (TL1.south);
\draw[->,double] (VHc) -- (supL1.south);
\draw[->,double] (LKi) -- (supsup22.south);
\draw[->,double] (LKi) -- (T22.south);
\draw[->,double] (LKi) -- (sup22.south);
\end{tikzpicture}
\caption{}
\end{figure}

\textbf{Frame of proof of Theorem \ref{main2}:}

\begin{figure}[h]
\centering
\begin{tikzpicture}[>=stealth,transform shape,scale=.8]
\node (Sd) {Sparse domination};
\draw[->,double] (Sd.north) -- ++(0,1) node[above] (Lbwo) {$L^{p}(w)$ boundedness without decay};
\node[right=5cm] (Lbwd) at (Lbwo.east) {$L^{2}$ boundedness with decay};
\node (LeKi) at (Sd-|Lbwd) {$L^{2}$ decay estimate $\oplus$ Khinchin's inequality};
\draw[<-,double] (Lbwd.south) -- (LeKi);

\draw[decorate,decoration={brace}] (Lbwo.north) -- coordinate (Lbd) (Lbwd.north);
\node[above=2cm] (MLp) at (Lbd) {$\|M\|_{L^{p}(w)\rightarrow L^{p}(w)}$};
\draw[->,double] (MLp.north) -- ++(0,1) node[above] (I2) {${\rm II}$};

\node (supTjLp) at (Lbwo|-MLp) {$\Big\|\sup\limits_{k\in\mathbb{Z}}|\tilde{T}_{j}^{\alpha,N}(\cdot)\ast\Delta[2^{k}]\phi|\makebox[0pt][l]{$\Big\|_{L^{p}(w)\rightarrow L^{p}(w)}$}$};
\draw[->,double] (Lbd.north)+(0,.1) --node[below left=-3mm,align=left] {Interpolation\\with change of measure} (supTjLp.south);
\node (I1) at (supTjLp|-I2) {${\rm I}$};
\draw[->,double] (supTjLp) --node[right] {Decomposition} (I1.south);

\node (supRLp) at (Lbwd|-MLp) {$\Big\|\sup\limits_{k\in\mathbb{Z}}|(\cdot)\ast R^{\alpha}\ast A_{k+s}^{\alpha}K_{\alpha}^{0}\ast (\delta_{0}-\Delta[2^{k}]\phi)|\makebox[0pt][l]{$\Big\|_{L^{p}(w)\rightarrow L^{p}(w)}$}$};
\draw[->,double] (Lbd.north)+(0,.1) --node[below right=-3mm,align=right] {Interpolation\\with change of measure} (supRLp.south);
\node (I3) at (supRLp|-I2) {${\rm III}$};
\draw[->,double] (supRLp) --node[right] {Decomposition} (I3.south);

\draw[decorate,decoration={brace}] (I1.north) --node[above] (suptTLp) {$\Big\|\sup\limits_{k\in\mathbb{Z}}|\tilde{T}_{\Omega,\alpha}^{k}(-\Delta_{\HH})^{-\alpha/2}(\cdot)|\makebox[0pt][l]{$\Big\|_{L^{p}(w)\rightarrow L^{p}(w)}$}$} (I3.north);
\draw[<->,double] (suptTLp.north) -- ++(0,1) node[above] (supTLp) {$\Big\|\sup\limits_{k\in\mathbb{Z}}|T_{\Omega,\alpha}^{k}(-\Delta_{\HH})^{-\alpha/2}(\cdot)|\makebox[0pt][l]{$\Big\|_{L^{p}(w)\rightarrow L^{p}(w)}$}$};

\draw[decorate,decoration={brace,mirror}] (supTLp.north) -- coordinate (Msup) ++(-7,0) node[below] (MdLp) {$\|M_{\Omega,\alpha}(-\Delta_{\HH})^{-\alpha/2}\|\makebox[0pt][l]{$_{L^{p}(w)\rightarrow L^{p}(w)}$}$};
\draw[<-,double] (MdLp.south) -- ++(0,-1) node[below] {$\|M\|\makebox[0pt][l]{$_{L^{p}(w)\rightarrow L^{p}(w)}$}$};

\draw[->,double] (Msup.north)+(0,.1) --node[right] {Decomposition} ++(0,1) node[above] {$\|T_{\Omega,\alpha}^{\#}\|\makebox[0pt][l]{$_{L_{\alpha}^{p}(w)\rightarrow L^{p}(w)}$}$};
\end{tikzpicture}
\caption{}
\end{figure}

\subsection{Notation and structure of the paper}
For $1\leq p \leq+\infty$, we denote the norm of a function $f\in L^{p}(\HH)$ by $\|f\|_{p}$. If $T$ is a bounded linear operator
on $L^{p}(\HH)$, $1\leq p\leq+\infty$, we write $\|T\|_{p\rightarrow p}$ for the operator norm of $T$. The
indicator function of a subset $E\subseteq X$ is denoted by $\chi_{E}$. We use $A\lesssim B$ to denote the statement that $A\leq CB$ for some constant $C>0$, and $A\sim B$ to denote the statement that $A\lesssim B$ and $B\lesssim A$.

This paper is organized as follows. In Section \ref{preliminariessec} we provide the preliminaries, including some auxiliary lemmas on homogeneous groups, the definitions of $A_p$ weights and Calder\'{o}n-Zygmund operators with Dini-continuous kernel on  homogeneous group $\HH$, two types of decompositions and their corresponding $L^{2}$ decay estimates. In Section \ref{unweightsection} and \ref{weightsection}, we give the proofs of Theorem \ref{main1} and \ref{main2}, respectively.
%In the Appendix, we show the quantitative maximal version of pointwise domination theorem on homogeneous groups.

\bigskip
\section{Preliminaries and fundamental tools on homogeneous Lie groups}\label{preliminariessec}
\setcounter{equation}{0}

\subsection{Auxiliary lemmas on homogeneous group $\HH$}\label{prehomo}
In this subsection, we recall some basic definitions on homogeneous groups and then show some auxiliary lemmas. To begin with, we recall the multiplication structure on $\HH$ (see \cite[Theorem 1.3.15]{Liebook}): if $x=(x_{1},\cdots,x_{n})$, $y=(y_{1},\cdots,y_{n})\in\HH$, then for any $2\leq j\leq n$,
\begin{align}\label{multilaw}
(xy)_{1}=x_{1}+y_{1},\ \ (xy)_{j}=x_{j}+y_{j}+Q_{j}(x,y),
\end{align}
where $Q_{j}(x,y)$ is a sum of mixed monomials in $x$, $y$, only depends on $x_{i}$, $y_{i}$, $i=1,2,\cdots,j-1$, and satisfies 
$Q_{j}(\lambda\circ x,\lambda\circ y)=\lambda^{a_{j}}Q_{j}(x,y)$ for all $\lambda>0$. As a corollary (see \cite[Corollary 1.3.16]{Liebook}), for any $x=(x_{1},\cdots,x_{n})\in\HH$ and $2\leq j\leq n$, one has
\begin{align}\label{xjjj}
(x^{-1})_{1}=-x_{1},\ \ (x^{-1})_{j}=-x_{j}+q_{j}(x),
\end{align}
where $q_{j}(x)$ only depends on $x_{1}$, $x_{2}$, $\cdots$, $x_{j-1}$, and is a polynomial function in $x$ of homogeneous degree $a_{j}$, that is, $q_{j}(\lambda\circ x)=\lambda^{a_{j}}q_{j}(x)$ for all $\lambda>0$. The following simple observation is useful to exploit the $[\alpha]$-order cancellation property of $\Omega$ in what follows.
\begin{lemma}\label{inversethm}
{\rm (1)} Let $N$ be a positive integer and $P(x)$ be a polynomial of homogeneous degree $N$, then $P(x^{-1})$ is a polynomial in $x$ of homogeneous degree $N$.

{\rm (2)} Let $N$ be a positive integer and $P(x)$ be a polynomial of homogeneous degree $N$, then for any $y\in\HH$, $P(xy)$ and $P(yx)$ are summations of polynomials in $x$, with coefficients depending on $y$, of homogeneous degree less than or equal to $N$.
\end{lemma}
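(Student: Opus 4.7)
The plan for part (1) is to exploit the fact, stated at the very beginning of the paper, that the dilation structure preserves the group operations, i.e.\ the map $x\mapsto \lambda\circ x$ is a group homomorphism on $\HH$. As an immediate consequence, $(\lambda\circ x)^{-1}=\lambda\circ x^{-1}$ for every $\lambda>0$ and $x\in\HH$. Combined with formula \eqref{xjjj}, this shows that each coordinate $(x^{-1})_j$ is an honest polynomial in $x$, so $x\mapsto P(x^{-1})$ is a polynomial in $x$. To identify its homogeneous degree, I would simply compute
\begin{align*}
P((\lambda\circ x)^{-1}) = P(\lambda\circ x^{-1}) = \lambda^{N} P(x^{-1}),
\end{align*}
where the second equality uses that $P$ is homogeneous of degree $N$. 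Hence $P(x^{-1})$ is homogeneous of degree $N$ in $x$, as claimed.

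The plan for part (2) is to carry out the same sort of dilation computation, but on $(xy)_j$ instead of $(x^{-1})_j$. By \eqref{multilaw} we have $(xy)_j = x_j + y_j + Q_j(x,y)$, where $Q_j$ is a polynomial in $(x,y)$ jointly homogeneous of degree $\alpha_j$ under the simultaneous dilation $(x,y)\mapsto (\lambda\circ x,\lambda\circ y)$. Expanding $Q_j$ into monomials $x^{\beta}y^{\gamma}$, joint homogeneity forces $\sum_i\alpha_i\beta_i+\sum_i\alpha_i\gamma_i=\alpha_j$; viewing $y$ as fixed coefficients and $x$ as the variable, each such monomial contributes a polynomial in $x$ of homogeneous degree $\sum_i\alpha_i\beta_i\leq \alpha_j$. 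Consequently, for each fixed $y$, the coordinate $(xy)_j$ decomposes, as a polynomial in $x$, into a sum of homogeneous pieces of degrees $\leq \alpha_j$.

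Next I would expand $P(xy)=\sum_{\beta} c_\beta\prod_{j=1}^{n}\bigl((xy)_j\bigr)^{\beta_j}$, where the sum runs over multi-indices $\beta$ with $\sum_j\alpha_j\beta_j=N$. Multiplying out the $y$-dependent polynomials in $x$ furnished by the previous step, each resulting monomial in $x$ has homogeneous degree at most $\sum_j\alpha_j\beta_j=N$. This proves the assertion for $P(xy)$; the argument for $P(yx)$ is identical since \eqref{multilaw} is symmetric in the roles of the two arguments modulo swapping $x$ and $y$ inside $Q_j$.

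I do not anticipate a genuine obstacle here: the lemma is essentially a bookkeeping consequence of two facts already recorded, namely the bi-homogeneity of $Q_j$ in \eqref{multilaw} and the homogeneous-degree-$\alpha_j$ expression for $(x^{-1})_j$ in \eqref{xjjj}. The only care needed is to keep the role of $y$ as a coefficient (rather than a variable) straight when tracking the homogeneous degree in $x$.
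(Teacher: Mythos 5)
Your proposal is correct and follows essentially the same route as the paper: part (1) amounts to the identity $(\lambda\circ x)^{-1}=\lambda\circ x^{-1}$, which the paper verifies coordinatewise from \eqref{xjjj} via $q_j(\lambda\circ x)=\lambda^{\alpha_j}q_j(x)$ (you invoke the dilation-automorphism property directly, which is the same fact), and part (2) is the same monomial-by-monomial bookkeeping with the bi-homogeneous $Q_j$ from \eqref{multilaw}, treating $y$ as a coefficient. No gap.
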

\begin{proof}
(1) According to the hypothesis, $P(x)$ is of the form: $P(x)=\sum_{a_{1}b_{1}+\cdots+a_{n}b_{n}=N}c_{b}x_{1}^{b_{1}}\cdots x_{n}^{b_{n}}$. Therefore, it follows from \eqref{xjjj} that for any $\lambda>0$,
\begin{align*}
P(\lambda\circ x^{-1})&=\sum_{a_{1}b_{1}+\cdots+a_{n}b_{n}=N}c_{b}(-\lambda^{a_{1}}x_{1})^{b_{1}}(-\lambda^{a_{2}}x_{2}+q_{2}(\lambda\circ x))^{b_{2}}\cdots(-\lambda^{a_{n}}x_{n}+q_{n}(\lambda\circ x))^{b_{n}}\\
&=\sum_{a_{1}b_{1}+\cdots+a_{n}b_{n}=N}c_{b}(-\lambda^{a_{1}}x_{1})^{b_{1}}(-\lambda^{a_{2}}x_{2}+\lambda^{a_{2}}q_{2}( x))^{b_{2}}\cdots(-\lambda^{a_{n}}x_{n}+\lambda^{a_{n}}q_{n}(x))^{b_{n}}\\
&=\lambda^{N} P(x^{-1}).
\end{align*}
This implies the first statement of Lemma \ref{inversethm}.

(2) By \eqref{multilaw}, $P(xy)$ is of the form:
\begin{align*}
&P(xy)=\sum_{a_{1}b_{1}+\cdots+a_{n}b_{n}=N}c_{b}(xy)_{1}^{b_{1}}(xy)_{2}^{b_{2}}\cdots (xy)_{n}^{b_{n}}\\
&=\sum_{a_{1}b_{1}+\cdots+a_{n}b_{n}=N}c_{b}(x_{1}+y_{1})_{1}^{b_{1}}(x_{2}+y_{2}+Q_{2}(x_{1},y_{1}))^{b_{2}}\cdots (x_{n}+y_{n}+Q_{n}(x_{1},\cdots,x_{n-1},y_{1},\cdots,y_{n-1}))^{b_{n}},
\end{align*}
where $Q_{j}(x,y)$ is a sum of mixed monomials in $x$, $y$, only depends on $x_{i}$, $y_{i}$, $i=1,2,\cdots,j-1$, and satisfies $Q_{j}(\lambda\circ x,\lambda\circ y)=\lambda^{a_{j}}Q_{j}(x,y)$ for all $\lambda>0$. Hence, by expanding each term in the brackets and handling $P(yx)$ similarly, we can easily conclude the second statement of Lemma \ref{inversethm}.
%where
%$$Q_{j}(x_{1},\cdots,x_{j-1},y_{1},\cdots,y_{j-1})=\sum C_{p^{(j-1)},q^{(j-1)},j}x_{1}^{p_{1}^{(j-1)}}\cdots x_{j-1}^{p_{j-1}^{(j-1)}}y_{1}^{p_{1}^{(j-1)}}\cdots y_{j-1}^{p_{j-1}^{(j-1)}}.$$
%Here the summation is taken over $\alpha_{1}(p_{1}^{(j-1)}+q_{1}^{(j-1)})+\cdots+\alpha_{1}(p_{j-1}^{(j-1)}+q_{j-1}^{(j-1)})=\alpha_{j}$.
\end{proof}
Now we denote the set of left-invariant vector fields on $\HH$ by $\mathfrak{g}$, which is called the Lie algebra of $\HH$. One identifies $\mathfrak{g}$ and $\mathbb{H}$ via the exponential map
\begin{align*}
\exp: \mathfrak{g} \longrightarrow \mathbb{H},
\end{align*}
which is a globally defined diffeomorphism. Let $X_{j}\ ({\rm resp.}\ Y_{j})$ be the left-invariant (resp. right-invariant) vector field that agrees with $\partial/\partial x_{j}\ ({\rm resp.}\ \partial/\partial y_{j})$ at the origin. Equivalently (\cite{FoSt}),
\begin{align*}
X_{j}f(x)=\frac{d}{dt}f(x \exp(tX_{j}))\big|_{t=0},\ \ Y_{j}f(x)=\frac{d}{dt}f(\exp(tX_{j}) x)\big|_{t=0}.
\end{align*}
By Proposition 1.2.16 in \cite{Liebook}, the family $\{X_{j}\}_{1\leq j\leq n}\ ({\rm resp.}\ \{Y_{j}\}_{1\leq j\leq n})$ forms a Jacobian basis of $\mathfrak{g}$.  We adopt the following multiindex notation for higher order derivatives. For $I=(i_{1},i_{2},\ldots,i_{n})\in\mathbb{N}^{n}$, we denote
$
X^{I}:=X_{1}^{i_{1}}X_{2}^{i_{2}}\cdots X_{n}^{i_{n}}.
$
Moreover, let $|I|$ be the order of the differential operator $X^{I}$, while $d(I)$ is the corresponding homogeneous degree, that is,
\begin{align*}
|I|:=i_{1}+i_{2}+\ldots+i_{n},\ \ d(I):=a_{1}i_{1}+a_{2}i_{2}+\ldots+a_{n}i_{n}.
\end{align*}
Moreover, let $\Delta_{\HH}:=\sum_{j=1}^{n}X_{j}^{2}$ be the sub-Laplacian operator on $\HH$.

\begin{lemma}\label{citetaylor}
Assume that $f\in C_0^{N+1}(\HH)$ for some $N\in\mathbb{N}$, satisfying supp$f\subset B(0,1)$. For any $x\in\HH$, set $f_k(x)=f(2^{-k}\circ x)$. Moreover, assume that $\{m_k\}_{k\in\mathbb{Z}}$ be a family of Borel measure satisfying $N$-order cancellation and supp$m_k\subset B(0,2^k)$. Then there are constants $C_N,\kappa_N>0$ such that
\begin{align*}
|(f_k\ast m_j)(x)|\leq C_N 2^{-k(N+1)}\sup\limits_{\substack{\rho(z)\leq \kappa_{N}2^{j-k}\\d(I)=N+1}}|(X^If)((2^{-k}\circ x)z)|\int_{\HH}\rho(y)^{N+1}d|m_j|(y),
\end{align*}
and
\begin{align*}
|(m_j\ast f_k)(x)|\leq C_N 2^{-k(N+1)}\sup\limits_{\substack{\rho(z)\leq \kappa_{N}2^{j-k}\\d(I)=N+1}}|(Y^If)(z(2^{-k}\circ x))|\int_{\HH}\rho(y)^{N+1}d|m_j|(y).
\end{align*}
\end{lemma}
\begin{proof}
It suffices to show the first inequality since the second one is similar. To this end, let $L_{x}^{f}$ be the left Taylor polynomial of $f$ at $x$ of homogeneous degree $N$. Then by the $N$-order cancellation condition of $m_k$ and Taylor's inequality on homogeneous groups $\HH$ (see [16, Corollary 1.44]),
\begin{align*}
|(f_k\ast m_j)(x)|&=\left|\int_{\HH}\Big(f(2^{-k}\circ(xy^{-1}))-L_{2^{-k}\circ x}^f(2^{-k}\circ y^{-1})\Big)dm_j(y)\right|\\
&\leq C_N\int_{\HH}\sup\limits_{\substack{\rho(z)\leq \kappa_{N}\rho(2^{-k}\circ y)\\d(I)=N+1}}|(X^If)((2^{-k}\circ x)z)|\rho(2^{-k}\circ y)^{N+1}d|m_j|(y)\\
&\leq C_N 2^{-k(N+1)}\sup\limits_{\substack{\rho(z)\leq \kappa_{N}2^{j-k}\\d(I)=N+1}}|(X^If)((2^{-k}\circ x)z)|\int_{\HH}\rho(y)^{N+1}d|m_j|(y)
\end{align*}
for some constants $C_N,\kappa_N>0$. This ends the proof of Lemma \ref{citetaylor}.
\end{proof}
%We next recall the definition of polynomial on homogeneous group $\HH$ from Folland--Stein \cite{FoSt}. A function $P$ is called a polynomial on $\HH$ if $P\circ \exp$ is a polynomial on $g$ (which we identify $\mathfrak{g}$ as a vector space). Let $\xi_{1},\ldots,\xi_{n}$ be the basis for the linear forms on $\mathfrak{g}$ dual to the basis $X_{1},\ldots,X_{n}$ for $\mathfrak{g}$, and let $\Xi_{j}=\xi_{j}\circ\exp^{-1}$. Then $\Xi_{1},\ldots,\Xi_{n}$ are polynomials on $\HH$, which form a global coordinate system on $\HH$ and generate the algebra of polynomials on $\HH$. Thus, for any polynomial $P$ on $\HH$, it can be written as $P=\sum_{I}a_{I}\Xi^{I}$, where $\Xi^{I}=\Xi_{1}^{i_{1}}\Xi_{2}^{i_{2}}\ldots\Xi_{n}^{i_{n}}$.

We have the following mean value theorem of integral type.
\begin{lemma}\label{appen}
Let $f\in C^{1}(\HH)$ and $j=1,2,\cdots,\nu$ be the indices such that $a_{j}=1$, then
\begin{align*}
\int_{\HH}|f(yx)-f(x)|dx\lesssim\rho(y)\sum_{j=1}^{\nu}\int_{\HH}|(Y_{j}f)(x)|dx.
\end{align*}
\end{lemma}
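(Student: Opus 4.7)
The plan is to combine a one-dimensional fundamental-theorem-of-calculus identity along a horizontal one-parameter subgroup with a telescoping decomposition of $y$ into horizontal pieces. First I would treat the base case $y=\exp(tX_j)$ with $\alpha_j=1$: by the definition of $Y_j$ as a right-invariant vector field and the FTC,
\[
f(\exp(tX_j)x)-f(x)=\int_0^{t}(Y_j f)(\exp(sX_j)x)\,ds.
\]
Taking absolute values, integrating over $x\in\HH$, and using that the left translation $x\mapsto \exp(sX_j)x$ preserves Lebesgue measure on $\HH$ yields the one-step estimate
\[
\int_{\HH}|f(\exp(tX_j)x)-f(x)|\,dx\le |t|\int_{\HH}|Y_j f(x)|\,dx,
\]
with $|t|=\rho(\exp(tX_j))$ precisely because $\alpha_j=1$.

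Next I would reduce a general $y$ to iterated applications of the base case by decomposing it as a product of horizontal one-parameter-subgroup pieces: there exist indices $j_1,\ldots,j_M\in\{1,\ldots,\nu\}$ and scalars $t_1,\ldots,t_M$ with
\[
y=\exp(t_1 X_{j_1})\exp(t_2 X_{j_2})\cdots\exp(t_M X_{j_M}),\qquad \sum_{k=1}^{M}|t_k|\lesssim\rho(y),
\]
in which $M$ and the implicit constant depend only on the group law and the exponents $\alpha_1,\ldots,\alpha_n$. Writing $y_M=0$ and $y_k=\exp(t_{k+1}X_{j_{k+1}})\cdots\exp(t_M X_{j_M})$ for $0\le k\le M-1$ (so that $y_0=y$ and $y_{k-1}=\exp(t_k X_{j_k})y_k$), I would telescope
\[
f(yx)-f(x)=\sum_{k=1}^{M}\bigl[f(\exp(t_k X_{j_k})(y_k x))-f(y_k x)\bigr],
\]
apply the one-step estimate to each summand after the Haar-measure-preserving change of variable $x\mapsto y_k x$, and sum to reach
\[
\int_{\HH}|f(yx)-f(x)|\,dx\le \sum_{k=1}^{M}|t_k|\int_{\HH}|Y_{j_k}f(x)|\,dx\lesssim\rho(y)\sum_{j=1}^{\nu}\int_{\HH}|Y_j f(x)|\,dx.
\]

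The main obstacle is producing the horizontal decomposition just used, together with the linear length bound $\sum|t_k|\lesssim\rho(y)$. This is the ball-box/Chow--Rashevsky theorem in disguise: the equivalence of the homogeneous norm $\rho$ with the sub-Riemannian distance induced by the degree-$1$ vector fields $\{X_j\}_{\alpha_j=1}$. It requires that $X_1,\ldots,X_\nu$ generate $\mathfrak{g}$, which is the implicit standing assumption on $\HH$ under which the lemma is meaningful (a function depending only on higher-degree coordinates would otherwise furnish a counterexample). Higher-degree translations are built from horizontal ones by iterating the commutator identity $\exp([X_i,X_j])\approx\exp(X_i)\exp(X_j)\exp(-X_i)\exp(-X_j)$, which under the parabolic rescaling $\rho(y)\ll 1$ costs $O(\rho(y))$ total horizontal length per degree-$2$ unit, and similarly at higher steps. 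Once the decomposition is in hand, the remaining ingredients (FTC on each horizontal piece, Fubini, and left-translation invariance of Lebesgue measure) are entirely routine.
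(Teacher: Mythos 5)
Your argument is correct and follows the same overall strategy as the paper: telescope $y$ into first-layer pieces, apply the fundamental theorem of calculus along a one-parameter subgroup (producing right-invariant derivatives $Y_j$), and use translation invariance of Lebesgue measure. The only real difference is the decomposition you invoke. The paper uses \cite[Lemma 1.40]{FoSt} for stratified groups, which gives $y=y_1\cdots y_N$ with a \emph{bounded} number of factors $y_k\in\exp(V_1)$ and $\rho(y_k)\lesssim\rho(y)$; it then applies the FTC along the mixed-direction subgroup $\exp\bigl(s(t_{k,1}X_1+\cdots+t_{k,\nu}X_\nu)\bigr)$, with the coordinates $t_{k,j}$ controlled via the explicit polynomial form of $\exp^{-1}$, so no further splitting into single coordinate directions is needed. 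You instead decompose all the way into single-direction horizontal factors $\exp(t_kX_{j_k})$ with $\sum_k|t_k|\lesssim\rho(y)$, which is the Chow--Rashevsky/ball-box statement; this is true on stratified groups and your commutator-scaling sketch is the standard route, but it is a strictly stronger input than what the paper needs, so if written out in full you would either have to prove this connectivity-with-length-bound statement or cite it, whereas the paper's mixed-direction FTC sidesteps it. Your observation that the first stratum must generate $\mathfrak g$ (stratification) is also accurate and matches the hypothesis the paper silently invokes in its proof.
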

\begin{proof}
Let $V_{1}$ be the linear span of $X_{1},\cdots,X_{\nu}$. Since $\HH$ is a stratified group, by \cite[Lemma 1.40]{FoSt}, there exists a constant  $N\in\mathbb{N}$ such that each element $y\in\HH$ can be represented as $y=y_{1}\cdots y_{N}$ with $y_{k}\in\exp(V_{1})$ and $\rho(y_{k})\lesssim\rho(y)$ for all $k$. Moreover, since $V_{1}$ is the linear span of $X_{1},\cdots,X_{\nu}$, each $y_{k}$ can be further written as $y_{k}=\exp(t_{k,1}X_{1}+\cdots+t_{k,\nu}X_{\nu})$. Equivalently,
\begin{align}\label{expmap1}
\exp^{-1}y_{k}=t_{k,1}X_{1}+\cdots+t_{k,\nu}X_{\nu}.
\end{align}
On the other hand, by \cite[Theorem 1.3.28]{Liebook}, The exponential map $\exp$ and its inverse map $\exp^{-1}$ are globally defined diffeomorphisms with polynomial component functions, which implies that if we write $y_{k}=(y_{k,1},\cdots,y_{k,n})$, then (see \cite[(1,75b)]{Liebook})
\begin{align}\label{expmap2}
\exp^{-1}y_{k}=y_{k,1}X_{1}+(y_{k,2}+C_{2}(y_{k,1}))X_{2}+\cdots+(y_{k,\nu}+C_{\nu}(y_{k,1},\cdots,y_{k,\nu-1}))X_{\nu},
\end{align}
where $C_{j}$'s are polynomial functions of homogeneous degree 1, completely determined by the multiplication law on $\HH$. Combining the equalities \eqref{expmap1} and \eqref{expmap2}, we conclude that
$$ t_{k,j}=\left\{\begin{array}{ll}y_{k,1}, &j=1,\\y_{k,j}+C_{j}(y_{k,1},\cdots,y_{k,j-1}), &j=2,\cdots,\nu.\end{array}\right.$$
Hence, $|t_{k,j}|\lesssim \rho(y_{k})\lesssim \rho(y)$. Therefore,
\begin{align*}
&\int_{\HH}|f(yx)-f(x)|dx\\
&\leq \sum_{k=1}^{N}\int_{\HH}|f(y_{k}y_{k+1}\cdots y_{N}x)-f(y_{k+1}\cdots y_{N}x)|dx\\
&=\sum_{k=1}^{N}\int_{\HH}\bigg|\int_{0}^{1}\big((t_{k,1}Y_{1}+\cdots+t_{k,\nu}Y_{\nu})f\big)\big(\exp(s(t_{k,1}X_{1}+\cdots+t_{k,\nu}X_{\nu}))y_{k+1}\cdots y_{N}x\big)ds\bigg|dx\\
&\leq \sum_{k=1}^{N}\sup\limits_{\rho(z_{k})\leq \rho(y_{k})}\int_{\HH}|\big((t_{k,1}Y_{1}+\cdots+t_{k,\nu}Y_{\nu})f\big)(z_{k}y_{k+1}\cdots y_{N}x)|dx\\
&\leq \sum_{k=1}^{N}\sum_{j=1}^{\nu}|t_{k,j}|\sup\limits_{\rho(z_{k})\leq \rho(y)}\int_{\HH}|(Y_{j}f)(z_{k}y_{k+1}\cdots y_{N}x)|dx\lesssim \rho(y)\sum_{j=1}^{\nu}\int_{\HH}|(Y_{j}f)(x)|dx,
\end{align*}
where we note that the integrand in the right-hand side of the first inequality above becomes $f(y_Nx)-f(x)$ when $k=N$ and similar notation happens in what follows.
%and similar changes % for simplicity, we abuse the notation $y_{N+1}\cdots y_{N}$ to denote the identity element on $\HH$.
This ends the proof of Lemma \ref{appen}.
\end{proof}
\subsection{$A_p$ weights on  homogeneous group $\HH$}\label{weisection}

We next recall the definition and some properties of $A_p$ weight on $\HH$. To begin with, we define a left-invariant quasi-distance $d$ on $\HH$ by $d(x,y)=\rho(x^{-1}y)$, which means that
there exists a constant $A_{0}\geq 1$ such that for any $x,y,z\in\HH$,
\begin{align*}
d(x,y)\leq A_{0}[d(x,z)+d(z,y)].
\end{align*}
Next, let $B(x,r):=\{y\in\HH :d(x,y)<r\}$ be the open ball with center $x\in\HH$ and radius $r>0$.

For $1<p<\infty$, we say that $w\in A_p$ if there exists a constant $C>0$ such that
\begin{align}\label{[Ap]}
[w]_{A_p}:=\sup_B\bigg(\frac1{|B|}\int_Bw(x)dx\bigg)\bigg(\frac1{|B|}\int_Bw(x)^{1-p'}dx\bigg)^{p-1}\le C.
\end{align}
Moreover, let $A_\infty := \cup_{1\leq p<\infty} A_p$ and we have
\begin{align*}
[w]_{A_{\infty}}:=\sup\limits_{B}\left(\frac1{|B|}\int_{B}wdx\right){\rm exp}\left(\frac1{|B|}\int_{B}{\rm log}\left(\frac{1}{w}\right)dx\right)<\infty,
\end{align*}
where the supremum is taken over all balls~$B\subset \HH$ with respect to the quasi-distance function $d$. We recall the following variants of the weight characteristic (see for example \cite{HRT}):
\begin{align*}
\{w\}_{A_{p}}:=[w]_{A_{p}}^{1/p}{\rm max}\{[w]_{A_{\infty}}^{1/p^{\prime}},[w^{1-p^{\prime}}]_{A_{\infty}}^{1/p}\},
\quad (w)_{A_{p}}:={\rm max}\{[w]_{A_{\infty}},[w^{1-p^{\prime}}]_{A_{\infty}}\}.
\end{align*}

\subsection{Calder\'{o}n-Zygmund operators with Dini-continuous kernel}
Let $T$ be a bounded linear operator on $L^{2}(\HH)$ represented as
\begin{align*}
Tf(x)=\int_{\HH}K(x,y)f(y)dy,\ \ \forall x\notin\supp f.
\end{align*}
A function $\omega:[0,1]\rightarrow [0,\infty)$ is a modulus of continuity if it satisfies the following three properties: (1) $\omega(0)=0$; (2) $\omega(s)$ is an increasing function; (3) For any $s_{1},s_{2}>0$, $\omega(s_{1}+s_{2})\leq\omega(s_{1})+\omega(s_{2})$.
\begin{definition}
We say that the operator $T$ is an $\omega$-Calder\'{o}n-Zygmund operator if the kernel $K$ satisfies the following two conditions:

(1) (size condition): There exists a constant $C_{T}>0$ such that
\begin{align*}
|K(x,y)|\leq \frac{C_{T}}{d(x,y)^{\mathbb{Q}}}.
\end{align*}

(2) (smoothness condition): Whenever $d(x,y)\geq 2A_{0}d(x,x^{\prime})>0$, we have
\begin{align*}
|K(x,y)-K(x^{\prime},y)|+|K(y,x)-K(y,x^{\prime})|\leq \omega\left(\frac{d(x,x^{\prime})}{d(x,y)}\right)\frac{1}{d(x,y)^{\mathbb{Q}}}.
\end{align*}
\end{definition}

Furthermore, $K$ is said to be a \textit{Dini-continuous kernel} if $\omega$ satisfies the \textit{Dini condition}:
\begin{align*}
\|\omega\|_{{\rm Dini}}:=\int_{0}^{1}\omega(s)\frac{ds}{s}<\infty.
\end{align*}
\subsection{Two types of decompositions}
To begin with, we recall that
for appropriate functions $f$ and $g$ defined on $\HH$, the convolution $f\ast g$ is defined by
\begin{align*}
f\ast g(x):=\int_{\HH}f(y)g(y^{-1}x)dy.
\end{align*}

For simplicity, denote
$$K_{\alpha}(x):=\frac{\Omega(\rho(x)^{-1}\circ x)}{\rho(x)^{\mathbb{Q+\alpha}}}.$$
Let $K_{\alpha}^{0}$ be the restriction of $K_{\alpha}$ to the annulus $\Sigma_{0}:=\{x\in\HH: 1\leq \rho(x)\leq 2\}$. Then we decompose the kernel $K_{\alpha}$ into smooth dyadic parts in the following way:
\begin{align*}
K_{\alpha}(x)=\frac{1}{\ln 2}\int_{0}^{\infty}\Delta_{\alpha}[t]K_{\alpha}^{0}(x)\frac{dt}{t},
\end{align*}
where for each $t$, we define the $\alpha$-scaling map by
\begin{align}\label{deltamap}
\Delta_{\alpha}[t]f(x):=t^{-\mathbb{Q}-\alpha}f(t^{-1}\circ x).
\end{align}
For simplicity, we set $\Delta[t]f(x):=\Delta_{0}[t]f(x)$.
Hence, we have the following decomposition
\begin{align}\label{akk0}
K_{\alpha}(x)=\sum\limits_{j\in\mathbb{Z}}2^{-j}\int_{0}^{\infty}\varphi(2^{-j}t)\Delta_{\alpha}[t]K_{\alpha}^{0}(x)dt=:\sum\limits_{j\in\mathbb{Z}}A_{j}^{\alpha}K_{\alpha}^{0}(x),
\end{align}
where $\varphi$ is a smooth cut-off function localized in $\{\frac{1}{2}\leq t\leq 2\}$ such that
$\sum\limits_{j\in\mathbb{Z}}2^{-j}t\varphi(2^{-j}t)=\frac{1}{\ln 2}$.
Hence,
\begin{align}\label{hjhbkk}
T_{\Omega,\alpha}f=\sum\limits_{j\in\mathbb{Z}}f\ast A_{j}^{\alpha}K_{\alpha}^{0}=:\sum\limits_{j\in\mathbb{Z}}T_{j}^{\alpha}f.
\end{align}
%where we denote
%\begin{align}\label{hjhbkk}
%T_{j}^{\alpha}f:=f\ast A_{j}^{\alpha}K_{\alpha}^{0}.
%\end{align}
Since $\supp K_{\alpha}^{0}\subset \{x\in\HH: 1\leq \rho(x)\leq 2\}$, we see that for any $q\geq1$,
\begin{align}\label{Aj}
\|A_{j}^{\alpha}K_{\alpha}^{0}\|_{1}\lesssim2^{-\alpha j}\|K_{\alpha}^{0}\|_{1}\lesssim2^{-\alpha j}\|\Omega\|_{L^{q}(\Sigma)}.
\end{align}

In addition, we introduce the following non-smooth dyadic decomposition of $K_{\alpha}$, which plays a key role in obtaining the  $L^{p}$ estimate of $T_{\Omega,\alpha}^{\#}$ for the case $\alpha>0$:
\begin{align*}
K_{\alpha}(x)=\sum\limits_{j\in\mathbb{Z}}\frac{\Omega(x)}{\rho(x)^{\mathbb{Q}+\alpha}}\chi_{2^{j}<\rho(x)\leq 2^{j+1}}=:\sum\limits_{j\in\mathbb{Z}}B_{j}^{\alpha}\Omega(x).
\end{align*}
Therefore,
\begin{align*}
T_{\Omega,\alpha}f=\sum_{j\in\mathbb{Z}}f\ast B_{j}^{\alpha}\Omega=:\sum\limits_{j\in\mathbb{Z}}V_{j}^{\alpha}f.
\end{align*}
Moreover, it is direct that for any $q\geq 1$,
\begin{align}\label{Bj}
\|B_{j}^{\alpha}\Omega\|_{1}\lesssim 2^{-\alpha j}\|\Omega\|_{L^{q}(\Sigma)}.
\end{align}
\begin{remark}\label{share}
It is obvious that these two types of kernel truncations share many similar properties, including:  $A_{j}^{\alpha}K_{\alpha}^{0}$ and $B_{j}^{\alpha}\Omega$ are supported in the annulus $\{x\in\HH:\rho(x)\sim 2^{j}\}$ and satisfy the cancellation condition of order $[\alpha]$. The key difference lies in the $L^{2}$ decay estimate for $\alpha=0$ (see Lemma \ref{ckt}): the integral in the definition of $A_{j}^{\alpha}K_{\alpha}^{0}$ plays a crucial role in implementing the iterated $TT^{*}$ argument from \cite{Tao}, which relies on the integration by part with respect to $t$. Moreover,  the truncation $B_{j}^{\alpha}\Omega$ is feasible to obtain the unweighted $L^{p}$ estimate of $T_{\Omega,\alpha}^{\#}$ in the case $\alpha>0$, while the feasibility of using $A_{j}^{\alpha}K_{\alpha}^{0}$ is unclear.
\end{remark}

We now introduce a form of Littlewood-Paley theory without any explicit use of the Fourier transform. To this end, let $\phi\in C_{c}^{\infty}(\mathbb{H})$ be a smooth cut-off function satisfying

(1)$\supp\phi\subset\big\{x\in\HH:\frac{1}{200}\leq\rho(x)\leq\frac{1}{100}\big\}$; (2) $\int_{\HH}\phi(x)dx=1$; (3)$\phi\geq 0$; (4) $\phi=\tilde{\phi}$,\\
where $\tilde{F}$ denotes the function $\tilde{F}(x):=F(x^{-1})$. In addition, for each integer $j$, denote
\begin{align*}
\Psi_{j}:=\Delta[2^{j-1}]\phi-\Delta[2^{j}]\phi.
\end{align*}
With this choice of $\phi$, it follows that $\Psi_{j}$ is supported on the ball of radius $C2^{j}$, has mean zero, and $\tilde{\Psi}_{j}=\Psi_{j}$. Next we define the partial sum operators $S_{j}$ by
\begin{align*}
S_{j}f:=f\ast \Delta[2^{j-1}]\phi.
\end{align*}
Their differences are given by
\begin{align*}
S_{j}f-S_{j+1}f=f\ast \Psi_{j}.
\end{align*}
\subsection{$L^{2}$ decay estimates}
Let $p(t,x,y)$  ($t>0$, $x,y\in\HH$) be the heat kernel associated to the sub-Laplacian operator $-\Delta_{\HH}$ (that is, the integral kernel of $e^{-t\Delta_{\HH}}$). For convenience, we set $p(t,x)=p(t,x,o)$ (see \cite[Chapter 1, section G]{FoSt}) and $p(x)=p(1,x)$. Moreover, for $0< \alpha<Q$, let $R^{\alpha}$ be the kernel of Riesz potential operator $(-\Delta_{\HH})^{-\alpha/2}$ of order $\alpha$, that is,
\begin{align}\label{ra}
(-\Delta_{\HH})^{-\alpha/2}f=f\ast R^{\alpha}.
\end{align}
Note that for any $t>0$, $x,y\in\HH$, we have the following representation formula:
\begin{align}\label{repre}
R^{\alpha}(x)=\frac{1}{\Gamma(\alpha/2)}\int_{0}^{\infty}t^{\frac{\alpha}{2}-1}p(t,x)dt,
\end{align}
where $p(t,x)$ satisfies the following Gaussian estimate (see \cite[Theorem 4.2]{VSC}): for any multi-index $I=(i_{1},i_{2},\ldots,i_{n})\in\mathbb{N}^{n}$,
\begin{align*}
|Y^{I}p(t,x)|\lesssim t^{-\frac{\mathbb{Q}+|I|}{2}}\exp\left(-c\frac{\rho^{2}(x)}{t}\right).
\end{align*}
It follows from this formula that for $0<\alpha<\mathbb{Q}$,
\begin{align}\label{Rie}
|R^{\alpha}(x)|\lesssim\rho(x)^{-\mathbb{Q}+\alpha}.
\end{align}
For convenience, we extend the definition of $R^{\alpha}$ to $\alpha=0$ by setting $R^{0}=\delta_{0}$, where $\delta_{0}$ is a dirac measure.

 Moreover, let $0< \alpha<\mathbb{Q}$.
 We suppose that $\{\mu_{j}^{\alpha}\}_{j\in\mathbb{Z}}$ is a family of Borel measures on $\HH$ and for any $j\in\mathbb{N}$,

(1) $\supp\mu_{j}^{\alpha}\subset \{x\in\HH:\rho(x)\sim 2^{j}\}$ and $\supp\widetilde{\mu_{j}^{\alpha}}\subset \{x\in\HH:\rho(x)\sim 2^{j}\}$;

(2) $\mu_{j}^{\alpha}$ and $\widetilde{\mu_{j}^{\alpha}}$ satisfy the cancellation condition of order $[\alpha]$, i.e. for any polynomial $P$ on $\HH$ of homogeneous degree $\leq [\alpha]$, $$\int_{\HH}P(x)d\mu_{j}^{\alpha}(x)=\int_{\HH}P(x)d\widetilde{\mu_{j}^{\alpha}}(x)=0;$$

(3) $\|\mu_{j}^{\alpha}\|_{1}\lesssim 2^{-\alpha j}\|\Omega\|_{L^{1}(\Sigma)}$ and $\|\widetilde{\mu_{j}^{\alpha}}\|_{1}\lesssim 2^{-\alpha j}\|\Omega\|_{L^{1}(\Sigma)}$, where $\widetilde{\mu_{j}^{\alpha}}$ denotes the reflection of $\mu_{j}^{\alpha}$. i.e. $\widetilde{\mu_{j}^{\alpha}}(E):=\mu_{j}^{\alpha}(\{x^{-1}:x\in E\})$, for any $\mu_{j}^{\alpha}$-measurable set $E\subset\HH$.

Then we have the following key $L^{2}$ decay estimate.

%We would like to mention that in the setting of isotropic Euclidean space, this estimate can be easily obtained with Plancherel's Theorem.
\begin{proposition}\label{keypro}
Let $0< \alpha<\mathbb{Q}$. Then there exist constants $C_{\mathbb{Q},\alpha}>0$ and $\tau>0$ such that for any $j,k\in\mathbb{Z}$,
\begin{align}\label{huhihi}
\|f\ast \Psi_{k}\ast R^{\alpha}\ast \mu_{j}^{\alpha}\|_{2}\leq C_{\mathbb{Q},\alpha}2^{-\tau|j-k|}\|\Omega\|_{L^{1}(\Sigma)}\|f\|_{2},
\end{align}
and
\begin{align}\label{huhihi2}
\|f\ast R^{\alpha}\ast \mu_{j}^{\alpha}\ast \Psi_{k}\|_{2}\leq C_{\mathbb{Q},\alpha}2^{-\tau|j-k|}\|\Omega\|_{L^{1}(\Sigma)}\|f\|_{2}.
\end{align}
\end{proposition}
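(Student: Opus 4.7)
The plan is to apply Young's inequality $\|f\ast K\|_{2}\leq\|K\|_{1}\|f\|_{2}$ to the convolution operator $Tf=f\ast K$ with kernel $K:=\Psi_{k}\ast R^{\alpha}\ast\mu_{j}^{\alpha}$, thereby reducing the claimed $L^{2}\to L^{2}$ decay to an $L^{1}$ estimate on $K$. By the homogeneity of $R^{\alpha}$ under $\Delta_{\alpha}[\cdot]$ and the natural scalings of $\Psi_{k}$ and $\mu_{j}^{\alpha}$, we normalize $j=0$ and aim to prove $\|K\|_{1}\lesssim 2^{-\tau|k|}\|\Omega\|_{L^{1}(\Sigma)}$ for some $\tau>0$.

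For $k\geq 0$, set $g:=R^{\alpha}\ast\mu_{0}^{\alpha}$. Applying Taylor's inequality (Lemma \ref{taylor}) to $R^{\alpha}(\cdot\, y^{-1})$ around $y=0$, and using Lemma \ref{inversethm} to convert Taylor polynomials in $y^{-1}$ into polynomials in $y$ of the same homogeneous degree, the $[\alpha]$-cancellation of $\mu_{0}^{\alpha}$ kills the polynomial part and yields the tail bound
\[
|g(x)|\lesssim \rho(x)^{-\mathbb{Q}-(1-\{\alpha\})}\|\Omega\|_{L^{1}(\Sigma)},\qquad\rho(x)\gtrsim 1,
\]
where $\{\alpha\}:=\alpha-[\alpha]$; this uses the Gaussian bounds from the representation \eqref{repre} to control the remainder's derivatives. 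Splitting $\|K\|_{1}=\|\Psi_{k}\ast g\|_{1}$ into the regions $\rho(x)\lesssim 2^{k}$ and $\rho(x)\gtrsim 2^{k}$, and combining this tail decay with $\|\Psi_{k}\|_{1}=O(1)$ and the standard annular volume estimate, produces $\|K\|_{1}\lesssim 2^{-k(1-\{\alpha\})}\|\Omega\|_{L^{1}(\Sigma)}$.

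For $k<0$, rewrite $K(x)=\int(\Psi_{k}\ast R^{\alpha})(xy^{-1})\, d\mu_{0}^{\alpha}(y)$ and expand $(\Psi_{k}\ast R^{\alpha})(xy^{-1})$ via Taylor's inequality around $y=0$ up to homogeneous degree $[\alpha]$. Again by Lemma \ref{inversethm} the Taylor polynomial is a polynomial in $y$ of homogeneous degree at most $[\alpha]$, which the cancellation of $\mu_{0}^{\alpha}$ annihilates, leaving the remainder
\[
|K(x)|\lesssim \int\rho(y)^{[\alpha]+1}\sup_{\rho(z)\leq\kappa}\bigl|X^{I'}(\Psi_{k}\ast R^{\alpha})(xz)\bigr|\, d|\mu_{0}^{\alpha}|(y),\qquad d(I')=[\alpha]+1.
\]
Using the scaling identity $(\Psi_{k}\ast R^{\alpha})(v)=2^{-k(\mathbb{Q}-\alpha)}(\Psi_{0}\ast R^{\alpha})(2^{-k}\circ v)$ together with the integrable decay $|X^{I'}(\Psi_{0}\ast R^{\alpha})(u)|\lesssim(1+\rho(u))^{-\mathbb{Q}-(2-\{\alpha\})}$ (obtained from the mean-zero of $\Psi_{0}$ applied against derivatives of $R^{\alpha}$, via \eqref{repre} and the Gaussian estimates), one computes $\|K\|_{1}\lesssim 2^{-|k|}\|\Omega\|_{L^{1}(\Sigma)}$.

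The main obstacle is Difficulty 3 of \S\ref{strategy}: Taylor's inequality is formulated in terms of homogeneous degree, while the natural bounds on derivatives of $R^{\alpha}$ and $\Psi_{k}$ come from left- or right-invariant vector fields of varying homogeneous weight. One must restrict attention to stratum-$1$ (weight-one) vector fields, as in the proof of Lemma \ref{appen}, and carefully track the homogeneous weight of each differentiation step; this produces the final exponent $\tau=\min(1,1-\{\alpha\})$, which is positive whenever $0<\alpha<\mathbb{Q}$. The companion estimate \eqref{huhihi2} is proved by the symmetric argument exploiting the reflection identities $\widetilde{R^{\alpha}}=R^{\alpha}$ (from heat kernel symmetry) and $\widetilde{\Psi_{k}}=\Psi_{k}$ (by design), together with the fact that $\widetilde{\mu_{j}^{\alpha}}$ satisfies precisely the same hypotheses as $\mu_{j}^{\alpha}$ with the same bound on $\|\Omega\|_{L^{1}(\Sigma)}$.
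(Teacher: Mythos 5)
Your reduction via Young's inequality to an $L^{1}$ bound on the composite kernel is a reasonable strategy (the paper also ultimately works with $L^{1}$ bounds of composite kernels), but both halves of your $L^{1}$ estimate have genuine gaps. For $k\geq j$ (your case $k\geq 0$ after normalization), the decay of $\|\Psi_{k}\ast g\|_{1}$, $g:=R^{\alpha}\ast\mu_{0}^{\alpha}$, cannot come from ``tail decay of $g$ plus $\|\Psi_{k}\|_{1}=O(1)$ plus splitting in $x$'': the bulk of $g$ (the part at unit scale, of $L^{1}$ size $\sim\|\Omega\|_{L^{1}(\Sigma)}$) convolved with $\Psi_{k}$ lands in $\{\rho(x)\lesssim 2^{k}\}$ and is only bounded by $\|\Psi_{k}\|_{1}\|g\|_{1}$, with no gain in $k$. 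Decay in this regime forces you to use a cancellation of the composite kernel $g$ itself (at least $\int g=0$, tested against the smoothness of $\Psi_{k}$ via an estimate like $\|\Psi_{k}(\cdot\,y^{-1})-\Psi_{k}\|_{1}\lesssim\min\{1,2^{-k}\rho(y)\}$). But this is exactly the delicate point: the moments of $g$ are not absolutely convergent (its tail is only $\rho^{-\mathbb{Q}-(1-\alpha+[\alpha])}$, so even $\int\rho(y)|g(y)|\,dy=\infty$), and proving $\int g=0$ requires a limiting argument that again uses the full order-$[\alpha]$ cancellation of $\mu_{0}^{\alpha}$ when $\alpha\geq 1$. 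This is precisely why the paper decomposes $R^{\alpha}=\sum_{\ell}R^{\alpha}_{\ell}$ into compactly supported dyadic pieces, for which the cancellation of $R^{\alpha}_{\ell}\ast\mu_{j}^{\alpha}$ is legitimate, and then sums over $\ell$ with a case analysis.

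The case $k<j$ is worse: Taylor-expanding $(\Psi_{k}\ast R^{\alpha})(xy^{-1})$ in $y$ to order $[\alpha]$ and invoking the cancellation of $\mu_{0}^{\alpha}$ is the wrong mechanism when the smooth factor lives at the fine scale $2^{k}\ll 2^{j}$. The remainder in Lemma \ref{taylor} involves $\sup_{\rho(z)\leq\kappa\rho(y)}|X^{I}(\Psi_{k}\ast R^{\alpha})(xz)|$ with $\rho(y)\sim 1$, and for $x$ within distance $O(1)$ of the support of $\mu_{0}^{\alpha}$ this sup is of size $2^{|k|(\mathbb{Q}+[\alpha]+1-\alpha)}$ (your decay estimate $(1+\rho(u))^{-\mathbb{Q}-2+\alpha-[\alpha]}$ only helps in the far field); integrating over that unit-size region destroys the claimed bound. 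In fact $\|\Psi_{k}\ast R^{\alpha}\ast\mu_{j}^{\alpha}\|_{1}\lesssim 2^{-|k-j|}\|\Omega\|_{L^{1}(\Sigma)}$ is false in general for $0<\alpha<1$: taking $\mu_{j}^{\alpha}$ of surface-measure type, $R^{\alpha}\ast\mu_{j}^{\alpha}$ has a $\mathrm{dist}^{\alpha-1}$-type singularity across the sphere, and the $L^{1}$ norm of its $\Psi_{k}$-piece is of order $2^{-\alpha|k-j|}$, not $2^{-|k-j|}$; so your exponent $\tau=\min(1,1-\alpha+[\alpha])$ cannot be correct. The true source of decay here is the mean-zero of $\Psi_{k}$ played against the limited ($\alpha$-order) smoothing of $R^{\alpha}$, which the paper captures through the pieces $R^{\alpha}_{\ell}$ (estimates such as $\|\Psi_{k}\ast R^{\alpha}_{\ell}\|_{1}\lesssim 2^{\alpha\ell}2^{-(\ell-k)}$) combined by geometric means; some such interpolation between competing bounds is unavoidable. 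Finally, note that \eqref{huhihi2} is not obtained from \eqref{huhihi} by reflection alone: reflecting $f\ast R^{\alpha}\ast\mu_{j}^{\alpha}\ast\Psi_{k}$ produces the configuration $\Psi_{k}\ast\widetilde{\mu_{j}^{\alpha}}\ast R^{\alpha}$, with the measure sitting between $\Psi_{k}$ and $R^{\alpha}$, which is a genuinely different (non-commutative) arrangement requiring its own estimates, as the paper's separate proof of the second statement makes explicit.
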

\begin{proof}
%The proof of the case $\alpha=0$ was shown in \cite[Proposition 4.1]{Tao}, so it suffices to consider the case $0<\alpha<\mathbb{Q}$.
We first give the proof of the first statement. Let $\eta$ be a standard smooth cut-off function supported on $\frac{1}{2}\leq \rho(x)\leq 2$ and satisfying $\sum_{\ell\in\mathbb{Z}}\eta(2^{-\ell}\circ x)=1$.  Then we can decompose the kernel $R^{\alpha}$ of the Riesz potential $(-\Delta_{\HH})^{-\alpha/2}$ as follows.
\begin{align}\label{rieszde}
R^{\alpha}(x)=\sum_{\ell\in\mathbb{Z}}R^{\alpha}(x)\eta(2^{-\ell}\circ x)=:\sum_{\ell\in\mathbb{Z}}R_{\ell}^{\alpha}(x).
\end{align}
%\textcolor[rgb]{1.00,0.00,0.00}{It follows from \eqref{Rie} that for any multi-index $I=(i_{1},i_{2},\ldots,i_{n})\in\mathbb{N}^{n}$,
%\begin{align}
%|X^{I}R_{\ell}^{\alpha}(x)|\lesssim 2^{-(\mathbb{Q}-\alpha+|I|)\ell}.
%\end{align}
%In particular,
It can be verified from \eqref{Rie} that
$
\|R_{\ell}^{\alpha}\|_{1}\lesssim 2^{\alpha\ell}.
$
Moreover, it follows from the representation formula \eqref{repre} that $R_{\ell}^{\alpha}(x)=2^{-(\mathbb{Q}-\alpha)\ell}\zeta(2^{-\ell}\circ x)$, where
\begin{align}\label{dezeta}
\zeta(x):=\frac{1}{\Gamma(\alpha/2)}\int_{0}^{\infty}t^{\frac{\alpha}{2}-1}p(t,x)\eta(x)dt
\end{align}
is a smooth function supported in the region $\{\frac{1}{2}\leq \rho(x)\leq 2\}$.

To continue, we divide our proof into six cases.

\textbf{Case 1:}  $k\geq j$.

\textbf{Case 1.1:} If $\ell\leq j$, then we apply Lemma \ref{citetaylor} to see that
\begin{align*}
\|\Psi_{k}\ast R_{\ell}^{\alpha}\ast \mu_{j}^{\alpha}\|_{1}
%&=2^{-\mathbb{Q}k}\int_{\HH}\left|\int_{\HH}\big(\Psi_{0}(2^{-k}\circ (xy^{-1}))-L_{2^{-k}\circ x}^{\Psi_{0}}(2^{-k}\circ y^{-1})\big)R_{\ell}^{\alpha}\ast \mu_{j}^{\alpha}(y)dy\right|dx\nonumber\\
%&\lesssim 2^{-\mathbb{Q}k}\int_{\HH}\int_{\rho(x)\lesssim 2^{k}}\sup\limits_{\substack{\rho(z)\leq \kappa_{[\alpha]}\rho(2^{-k}\circ y)\\d(I)=[\alpha]+1}}|X^{I}\Psi_{0}((2^{-k}\circ x) z)|dx|R_{\ell}^{\alpha}\ast \mu_{j}^{\alpha}(y)|\rho(2^{-k}\circ y)^{[\alpha]+1}dy\nonumber\\
&\lesssim 2^{-k([\alpha]+1)}\int_{\HH}|R_{\ell}^{\alpha}\ast \mu_{j}^{\alpha}(y)|\rho(y)^{[\alpha]+1}dy\lesssim 2^{-([\alpha]+1)(k-j)}\|R_{\ell}^{\alpha}\ast \mu_{j}^{\alpha}\|_{1}.
\end{align*}
This, in combination with Young's inequality, yields
\begin{align}\label{345}
\|f\ast \Psi_{k}\ast R_{\ell}^{\alpha}\ast \mu_{j}^{\alpha}\|_{2}
\lesssim 2^{-([\alpha]+1)(k-j)}\|R_{\ell}^{\alpha}\|_{1}\|\mu_{j}^{\alpha}\|_{1}\|f\|_{2}
\lesssim 2^{-([\alpha]+1)(k-j)}2^{-\alpha (j-\ell)}\|\Omega\|_{L^{1}(\Sigma)}\|f\|_{2}.
\end{align}

\textbf{Case 1.2:} If $\ell\geq j$, then we apply Lemma \ref{citetaylor} to see that
\begin{align}\label{678}
\|R_{\ell}^{\alpha}\ast \mu_{j}^{\alpha}\|_{1}
%&=2^{-(\mathbb{Q}-\alpha)\ell}\int_{\HH}\left|\int_{\HH}\big(\zeta(2^{-\ell}\circ (xy^{-1}))-L_{2^{-\ell}\circ x}^{\zeta}(2^{-\ell}\circ y^{-1})\big)\mu_{j}^{\alpha}(y)dy\right|dx\nonumber\\
%&\lesssim 2^{-(\mathbb{Q}-\alpha)\ell}\int_{\HH}\int_{\rho(x)\lesssim 2^{\ell}}\sup\limits_{\substack{\rho(z)\leq \kappa_{[\alpha]}\rho(2^{-\ell}\circ y)\\d(I)=[\alpha]+1}}|X^{I}\zeta\big((2^{-\ell}\circ x)z\big)|dx\rho(2^{-\ell}\circ y)^{[\alpha]+1}d|\mu_{j}^{\alpha}|(y)\nonumber\\
&\lesssim 2^{-([\alpha]+1-\alpha)\ell}\int_{\HH}\rho(y)^{[\alpha]+1}d|\mu_{j}^{\alpha}|(y)
%&\lesssim 2^{-([\alpha]+1-\alpha)\ell}2^{([\alpha]+1)j}\|\mu_{j}^{\alpha}\|_{1}
\lesssim 2^{-([\alpha]+1-\alpha)(\ell-j)}\|\Omega\|_{L^{1}(\Sigma)}.
\end{align}
This, in combination with Young's inequality, yields
\begin{align}\label{zbbm}
\|f\ast \Psi_{k}\ast R_{\ell}^{\alpha}\ast \mu_{j}^{\alpha}\|_{2}\leq \|R_{\ell}^{\alpha}\ast \mu_{j}^{\alpha}\|_{1}\|f\|_{2}\lesssim 2^{-([\alpha]+1-\alpha)(\ell-j)}\|\Omega\|_{L^{1}(\Sigma)}\|f\|_{2}.
\end{align}

\textbf{Case 1.2.1:} If $\ell\geq k$, then the estimate \eqref{zbbm} is enough.

%Taking geometric means of \eqref{zbbm} and \eqref{zbbn}, we conclude that for any $\gamma\in[0,1]$,
%\begin{align}\label{567}
%\|f\ast \Psi_{k}\ast R_{\ell}^{\alpha}\ast A_{j}^{\alpha}K_{\alpha}^{0}\|_{2}&\lesssim2^{-\gamma([\alpha]+1-\alpha)(\ell-j)}2^{\alpha(1-\gamma)(\ell-j)}2^{-(1-\gamma)(\ell-k)}\|\Omega\|_{q}\|f\|_{2}\nonumber\\
%&\lesssim 2^{(-\gamma[\alpha]+\alpha-1)(\ell-j)}2^{-(1-\gamma)(j-k)}\|\Omega\|_{q}\|f\|_{2}.
%\end{align}
%To our purpose, we choose $\gamma=\max\{\frac{1}{2},\frac{\alpha}{[\alpha]+1}\}$ when $\alpha\leq 1$ and $\frac{\alpha}{[\alpha]+1}<\gamma<1$ when $\alpha>1$ such that $-\gamma[\alpha]+\alpha-1<0$ and $\gamma([\alpha]+1)-\alpha>0$ for $0<\alpha<\mathbb{Q}$.

\textbf{Case 1.2.2:} If $j\leq\ell\leq k$, then we apply Lemma \ref{citetaylor} to see that
\begin{align*}
\|\Psi_{k}\ast R_{\ell}^{\alpha}\ast \mu_{j}^{\alpha}\|_{1}
%&=2^{-\mathbb{Q}k}\int_{\HH}\left|\int_{\HH}\big(\Psi_{0}(2^{-k}\circ (xy^{-1}))-L_{2^{-k}\circ x}^{\Psi_{0}}(2^{-k}\circ y^{-1})\big)R_{\ell}^{\alpha}\ast \mu_{j}^{\alpha}(y)dy\right|dx\nonumber\\
%&\lesssim 2^{-\mathbb{Q}k}\int_{\HH}\int_{\rho(x)\lesssim 2^{k}}\sup\limits_{\substack{\rho(z)\leq \kappa_{[\alpha]}\rho(2^{-k}\circ y)\\d(I)=[\alpha]+1}}|X^{I}\Psi_{0}\big((2^{-k}\circ x) z\big)|dx|R_{\ell}^{\alpha}\ast \mu_{j}^{\alpha}(y)|\rho(2^{-k}\circ y)^{[\alpha]+1}dy\nonumber\\
&\lesssim 2^{-([\alpha]+1)k}\int_{\HH}|R_{\ell}^{\alpha}\ast \mu_{j}^{\alpha}(y)|\rho( y)^{[\alpha]+1}dy\lesssim 2^{-([\alpha]+1)(k-\ell)}\|R_{\ell}^{\alpha}\ast \mu_{j}^{\alpha}\|_{1}.
\end{align*}
This, in combination with the estimate \eqref{678}, implies
\begin{align}\label{456}
\|f\ast \Psi_{k}\ast R_{\ell}^{\alpha}\ast \mu_{j}^{\alpha}\|_{2}\lesssim 2^{-([\alpha]+1)(k-\ell)}\|R_{\ell}^{\alpha}\ast \mu_{j}^{\alpha}\|_{1}\lesssim 2^{-([\alpha]+1)(k-j)}2^{\alpha(\ell-j)}\|\Omega\|_{L^{1}(\Sigma)}\|f\|_{2}.
\end{align}

Back to the proof of case 1, we combine the estimates \eqref{345}, \eqref{zbbm} and \eqref{456} to obtain that
\begin{align*}
&\|f\ast \Psi_{k}\ast R^{\alpha}\ast \mu_{j}^{\alpha}\|_{2}\\&\lesssim \sum_{\ell\leq j}2^{-([\alpha]+1)(k-j)}2^{-\alpha (j-\ell)}\|\Omega\|_{L^{1}(\Sigma)}\|f\|_{2}+\sum_{j\leq\ell\leq k}2^{-([\alpha]+1)(k-j)}2^{\alpha(\ell-j)}\|\Omega\|_{L^{1}(\Sigma)}\|f\|_{2}\\&+\sum_{\ell\geq k}2^{-([\alpha]+1-\alpha)(\ell-j)}\|\Omega\|_{L^{1}(\Sigma)}\|f\|_{2}\lesssim 2^{-\tau(k-j)}\|\Omega\|_{L^{1}(\Sigma)}\|f\|_{2}
\end{align*}
for some constant $\tau>0$.

\textbf{Case 2:} If $k\leq j$, then by Young's inequality,
\begin{align}\label{hbnm}
\|f\ast \Psi_{k}\ast R_{\ell}^{\alpha}\ast \mu_{j}^{\alpha}\|_{2}\lesssim \|\Psi_{k}\|_{1} \|R_{\ell}^{\alpha}\|_{1}\|\mu_{j}^{\alpha}\|_{1}\|f\|_{2}\lesssim 2^{\alpha(\ell-j)}\|\Omega\|_{L^{1}(\Sigma)}\|f\|_{2}.
\end{align}

\textbf{Case 2.1:} If $\ell\leq k$, then the estimate \eqref{hbnm} is enough.

\textbf{Case 2.2:} If $\ell\geq k$, then by the cancellation property of $\Psi_{k}$ and the mean value theorem on homogeneous groups (see \cite{FoSt}), we see that $\|\Psi_{k}\ast R_{\ell}^{\alpha}\|_{1}\lesssim 2^{\alpha\ell}2^{-(\ell-k)}$. Therefore,
\begin{align}\label{zbbn}
\|f\ast \Psi_{k}\ast R_{\ell}^{\alpha}\ast \mu_{j}^{\alpha}\|_{2}\leq \|\Psi_{k}\ast R_{\ell}^{\alpha}\|_{1}\|\mu_{j}^{\alpha}\|_{1}\|f\|_{2}\lesssim 2^{\alpha(\ell-j)}2^{-(\ell-k)}\|\Omega\|_{L^{1}(\Sigma)}\|f\|_{2}.
\end{align}

\textbf{Case 2.2.1:} If $k\leq\ell\leq j$, then taking geometric means of \eqref{hbnm} and \eqref{zbbn}, we see that for any $\gamma\in [0,1]$,
\begin{align*}
\|f\ast \Psi_{k}\ast R_{\ell}^{\alpha}\ast \mu_{j}^{\alpha}\|_{2}
&\lesssim 2^{\alpha\gamma(\ell-j)}2^{\alpha(1-\gamma)(\ell-j)}2^{-(1-\gamma)(\ell-k)}\|\Omega\|_{L^{1}(\Sigma)}\|f\|_{2}\\
&=2^{(\alpha-1+\gamma)(\ell-j)}2^{-(1-\gamma)(j-k)}\|\Omega\|_{L^{1}(\Sigma)}\|f\|_{2}.
\end{align*}
Choosing $\max\{1-\alpha,0\}<\gamma<1$, we obtain
\begin{align}\label{789}
\|f\ast \Psi_{k}\ast R_{\ell}^{\alpha}\ast \mu_{j}^{\alpha}\|_{2}\lesssim2^{\tau(\ell-j)}2^{-\tau(j-k)}\|\Omega\|_{L^{1}(\Sigma)}\|f\|_{2}
\end{align}
for some constant $\tau>0$.

%We apply Lemma \ref{taylor} to the function $\Psi_{0}$ to see that
%\begin{align*}
%|\Psi_{0}(xy^{-1})-L_{x}^{\Psi_{0}}(y^{-1})|\lesssim \rho(y)^{[\alpha]+1}\sup\limits_{\rho(z)\leq \kappa_{[\alpha]}\rho(y),d(I)=[\alpha]+1}|X^{I}\Psi_{0}(zx)|,
%\end{align*}
%where $L_{x}^{\Psi_{0}}$ is the left Taylor polynomial of $\Psi_{0}$ at $x$ of homogeneous degree $[\alpha]$.

\textbf{Case 2.2.2:} If $\ell\geq j$, then \eqref{678} holds.
This, in combination with Young's inequality, yields
\begin{align}\label{222}
\|f\ast \Psi_{k}\ast R_{\ell}^{\alpha}\ast \mu_{j}^{\alpha}\|_{2}\leq \|R_{\ell}^{\alpha}\ast \mu_{j}^{\alpha}\|_{1}\|f\|_{2}\lesssim 2^{-([\alpha]+1-\alpha)(\ell-j)}\|\Omega\|_{L^{1}(\Sigma)}\|f\|_{2}.
\end{align}
Taking geometric means of the estimates \eqref{zbbn} and \eqref{222}, we obtain that for any $\gamma\in [0,1]$,
\begin{align*}
\|f\ast \Psi_{k}\ast R_{\ell}^{\alpha}\ast \mu_{j}^{\alpha}\|_{2}
&\lesssim 2^{-\gamma([\alpha]+1-\alpha)(\ell-j)}2^{\alpha(1-\gamma)(\ell-j)}2^{-(1-\gamma)(\ell-k)}\|\Omega\|_{L^{1}(\Sigma)}\|f\|_{2}\\
&=2^{(-\gamma[\alpha]+\alpha-1)(\ell-j)}2^{-(1-\gamma)(j-k)}\|\Omega\|_{L^{1}(\Sigma)}\|f\|_{2}.
\end{align*}
Choosing $\gamma=\frac{1}{2}$ when $\alpha\leq 1$ and $\gamma\in(\frac{\alpha-1}{[\alpha]},1)$ when $\alpha>1$ such that $-\gamma[\alpha]+\alpha-1<0$ for $0<\alpha<\mathbb{Q}$, we obtain that
\begin{align}\label{890}
\|f\ast \Psi_{k}\ast R_{\ell}^{\alpha}\ast \mu_{j}^{\alpha}\|_{2}\lesssim2^{-\tau(\ell-j)}2^{-\tau(j-k)}\|\Omega\|_{L^{1}(\Sigma)}\|f\|_{2}
\end{align}
for some constant $\tau>0$.

Back to the proof of case 2, we combine the estimates \eqref{hbnm}, \eqref{789} and \eqref{890} to conclude that
\begin{align*}
&\|f\ast \Psi_{k}\ast R^{\alpha}\ast \mu_{j}^{\alpha}\|_{2}\\
&\lesssim \sum_{\ell\leq k}2^{\alpha(\ell-j)}\|\Omega\|_{L^{1}(\Sigma)}\|f\|_{2}+\sum_{k\leq\ell\leq j}2^{\tau(\ell-j)}2^{-\tau(j-k)}\|\Omega\|_{L^{1}(\Sigma)}\|f\|_{2}+\sum_{\ell\geq j}2^{-\tau(\ell-j)}2^{-\tau(j-k)}\|\Omega\|_{L^{1}(\Sigma)}\|f\|_{2}\\
&\lesssim2^{-\tau(j-k)}\|\Omega\|_{L^{1}(\Sigma)}\|f\|_{2}.
\end{align*}
This finishes the proof of the first statement.

Now we turn to the second statement. Before presenting its proof, we first point out that in the setting of Euclidean space, the second statement is clearly equivalent to the first one. However, generally speaking, the convolution operation cannot be exchanged in homogeneous groups, the proof of the second statement is slightly different from the first one and cannot be directly deduced from the first one.

We divide the proof of the second statement into four cases.

\textbf{Case 1:} If $k\geq j$, then we apply Lemma \ref{citetaylor} to see that
\begin{align}\label{case1}
\|\mu_{j}^{\alpha}\ast \Psi_{k}\|_{1}
%&=2^{-\mathbb{Q}k}\int_{\HH}\left|\int_{\HH}\big(\Psi_{0}(2^{-k}\circ (y^{-1}x))-R_{2^{-k}\circ x}^{\Psi_{0}}(2^{-k}\circ y^{-1})\big)d\mu_{j}^{\alpha}(y)\right|dx\nonumber\\
%&\lesssim 2^{-\mathbb{Q}k}\int_{\HH}\int_{\rho(x)\lesssim 2^{k}}\sup\limits_{\substack{\rho(z)\leq \kappa_{[\alpha]}\rho(2^{-k}\circ y)\\d(I)=[\alpha]+1}}|Y^{I}\Psi_{0}(z(2^{-k}\circ x))|dx\rho(2^{-k}\circ y)^{[\alpha]+1}d|\mu_{j}^{\alpha}|(y)\nonumber\\
&\lesssim 2^{-([\alpha]+1)k}\int_{\HH}\rho(y)^{[\alpha]+1}d|\mu_{j}^{\alpha}|(y)\lesssim 2^{-([\alpha]+1)(k-j)}2^{-\alpha j}\|\Omega\|_{L^{1}(\Sigma)}.
%&\lesssim 2^{-([\alpha]+1)(k-j)}\|\mu_{j}^{\alpha}\|_{1}
\end{align}

\textbf{Case 1.1:} If $\ell\leq k$, then we apply Young's inequality and the estimate \eqref{case1} to obtain
\begin{align}\label{cca1}
\|f\ast R_{\ell}^{\alpha}\ast \mu_{j}^{\alpha}\ast \Psi_{k}\|_{2}\lesssim 2^{\alpha\ell} \|\mu_{j}^{\alpha}\ast \Psi_{k}\|_{1}\|f\|_{2}\lesssim 2^{-\alpha(k-\ell)}2^{-([\alpha]+1-\alpha)(k-j)}\|\Omega\|_{L^{1}(\Sigma)}\|f\|_{2}.
\end{align}

\textbf{Case 1.2:} If $\ell\geq k$, then we apply Lemma \ref{citetaylor} to see that
\begin{align}\label{simi}
\|R_{\ell}^{\alpha}\ast \mu_{j}^{\alpha}\ast \Psi_{k}\|_{1}
%&=2^{-(\mathbb{Q}-\alpha)\ell}\int_{\HH}\left|\int_{\HH}\big(\zeta(2^{-\ell}\circ (xy^{-1}))-L_{2^{-\ell}\circ x}^{\zeta}(2^{-\ell}\circ y^{-1})\big)\mu_{j}^{\alpha}\ast \Psi_{k}(y)dy\right|dx\nonumber\\
%&\lesssim 2^{-(\mathbb{Q}-\alpha)\ell}\int_{\HH}\int_{\rho(x)\lesssim 2^{\ell}}\sup\limits_{\substack{\rho(z)\leq \kappa_{[\alpha]}\rho(2^{-\ell}\circ y)\\d(I)=[\alpha]+1}}|X^{I}\zeta\big((2^{-\ell}\circ x)z\big)|dx|\mu_{j}^{\alpha}\ast \Psi_{k}(y)|\rho(2^{-\ell}\circ y)^{[\alpha]+1}dy\nonumber\\
&\lesssim 2^{-([\alpha]+1-\alpha)\ell}\int_{\HH}|\mu_{j}^{\alpha}\ast \Psi_{k}(y)|\rho( y)^{[\alpha]+1}dy\lesssim 2^{-([\alpha]+1-\alpha)\ell}2^{([\alpha]+1)k}\|\mu_{j}^{\alpha}\ast \Psi_{k}\|_{1}.
\end{align}
This, in combination with the estimate \eqref{case1} and Young's inequality, yields
\begin{align}\label{cca2}
\|f\ast R_{\ell}^{\alpha}\ast \mu_{j}^{\alpha}\ast \Psi_{k}\|_{2}\lesssim 2^{-([\alpha]+1-\alpha)(\ell-k)}2^{-([\alpha]+1-\alpha)(k-j)}\|\Omega\|_{L^{1}(\Sigma)}\|f\|_{2}.
\end{align}

Back to the proof of case 1, we combine the estimates \eqref{cca1} and \eqref{cca2} to get that
\begin{align*}
\|f\ast R^{\alpha}\ast \mu_{j}^{\alpha}\ast \Psi_{k}\|_{2}
&\lesssim \sum_{\ell\leq k} 2^{-\alpha(k-\ell)}2^{-([\alpha]+1-\alpha)(k-j)}\|\Omega\|_{L^{1}(\Sigma)}\|f\|_{2}\\
&\qquad+\sum_{\ell\geq k}2^{-([\alpha]+1-\alpha)(\ell-k)}2^{-([\alpha]+1-\alpha)(k-j)}\|\Omega\|_{L^{1}(\Sigma)}\|f\|_{2}\\
&\lesssim2^{-([\alpha]+1-\alpha)(k-j)}\|\Omega\|_{L^{1}(\Sigma)}\|f\|_{2}.
\end{align*}

\textbf{Case 2:} If $k\leq j$, then it follows from \eqref{repre} that for any $i=1,2,\ldots,n$, we have $|Y_{i} R_{\ell}^{\alpha}(x)|\lesssim 2^{-(\mathbb{Q}-\alpha+1)\ell}$ and therefore,
\begin{align*}
|Y_{i} R_{\ell}^{\alpha}\ast \mu_{j}^{\alpha}(x)|\lesssim 2^{-(\mathbb{Q}-\alpha+1)\ell}2^{-\alpha j}\|\Omega\|_{L^{1}(\Sigma)},\ {\rm uniformly\ in}\ x\in\mathbb{H}.
\end{align*}

\textbf{Case 2.1:} If $\ell\leq j$, then we apply Lemma \ref{citetaylor} to see that
\begin{align*}
\|R_{\ell}^{\alpha}\ast \mu_{j}^{\alpha}\ast \Psi_{k}\|_{1}
%&=\int_{\rho(x)\lesssim 2^{j}}\left|\int_{\HH}\big(R_{\ell}^{\alpha}\ast \mu_{j}^{\alpha}(xy^{-1})-R_{\ell}^{\alpha}\ast \mu_{j}^{\alpha}(x)\big)\Psi_{k}(y)dy\right|dx\\
&\lesssim 2^{-(\mathbb{Q}-\alpha+1)\ell}2^{(\mathbb{Q}-\alpha)j}\|\Omega\|_{L^{1}(\Sigma)}\int_{\HH}\rho(y)|\Psi_{k}(y)|dy\lesssim 2^{-(\mathbb{Q}-\alpha+1)\ell}2^{(\mathbb{Q}-\alpha)j}2^{k}\|\Omega\|_{L^{1}(\Sigma)}.
\end{align*}
This, in combination with Young's inequality, implies
\begin{align}\label{790}
\|f\ast R_{\ell}^{\alpha}\ast \mu_{j}^{\alpha}\ast \Psi_{k}\|_{2}\lesssim 2^{-(\mathbb{Q}-\alpha+1)\ell}2^{(\mathbb{Q}-\alpha)j}2^{k}\|\Omega\|_{L^{1}(\Sigma)}\|f\|_{2}.
\end{align}
By taking geometric means of the estimates \eqref{hbnm} and \eqref{790}, we obtain that for any $\gamma_{1}\in [0,1]$,
\begin{align}\label{l0}
\|f\ast R_{\ell}^{\alpha}\ast \mu_{j}^{\alpha}\ast \Psi_{k}\|_{2}\lesssim 2^{\gamma_{1}\alpha(\ell-j)} 2^{-(1-\gamma_{1})(\mathbb{Q}-\alpha+1)\ell}2^{(1-\gamma_{1})(\mathbb{Q}-\alpha)j}2^{(1-\gamma_{1})k}\|\Omega\|_{L^{1}(\Sigma)}\|f\|_{2}.
\end{align}

\textbf{Case 2.2:} If $\ell\geq j$, then we apply Lemma \ref{citetaylor} to see that
\begin{align*}
\|R_{\ell}^{\alpha}\ast \mu_{j}^{\alpha}\ast \Psi_{k}\|_{1}
%&=\int_{\rho(x)\lesssim 2^{\ell}}\left|\int_{\HH}\big(R_{\ell}^{\alpha}\ast \mu_{j}^{\alpha}(xy^{-1})-R_{\ell}^{\alpha}\ast \mu_{j}^{\alpha}(x)\big)\Psi_{k}(y)dy\right|dx\\
&\lesssim 2^{-(1-\alpha)\ell}2^{-\alpha j}\|\Omega\|_{L^{1}(\Sigma)}\int_{\rho(x)\lesssim 2^{k}}\rho(y)|\Psi_{k}(y)|dy\lesssim 2^{-(1-\alpha)\ell}2^{-\alpha j}2^{k}\|\Omega\|_{L^{1}(\Sigma)}.
\end{align*}

This, in combination with Young's inequality, implies
\begin{align}\label{730}
\|f\ast R_{\ell}^{\alpha}\ast \mu_{j}^{\alpha}\ast \Psi_{k}\|_{2}\lesssim 2^{-(1-\alpha)\ell}2^{-\alpha j}2^{k}\|\Omega\|_{L^{1}(\Sigma)}\|f\|_{2}.
\end{align}

Moreover, from the proof of \eqref{simi} and the fact $\supp \mu_{j}^{\alpha}\ast \Psi_{k}\subset\{x\in\HH: \rho(x)\lesssim 2^{j}\}$ we see that
\begin{align*}
\|R_{\ell}^{\alpha}\ast \mu_{j}^{\alpha}\ast \Psi_{k}\|_{1}
&\lesssim 2^{\alpha\ell}\int_{\HH}|\mu_{j}^{\alpha}\ast \Psi_{k}(y)|\rho(2^{-\ell}\circ y)^{[\alpha]+1}dy\lesssim 2^{-([\alpha]+1-\alpha)\ell}2^{([\alpha]+1)j}\|\mu_{j}^{\alpha}\ast \Psi_{k}\|_{1}.
\end{align*}
This, in combination with Young's inequality $\|\mu_{j}^{\alpha}\ast \Psi_{k}\|_{1}\lesssim 2^{-\alpha j}\|\Omega\|_{L^{1}(\Sigma)}$, yields
\begin{align}\label{278}
\|f\ast R_{\ell}^{\alpha}\ast \mu_{j}^{\alpha}\ast \Psi_{k}\|_{2}\lesssim 2^{([\alpha]+1-\alpha)(j-\ell)}\|\Omega\|_{L^{1}(\Sigma)}\|f\|_{2}.
\end{align}
Taking geometric means of the estimates \eqref{730} and \eqref{278}, we obtain that for any $\gamma_{2}\in [0,1]$,
\begin{align}\label{j0}
\|f\ast R_{\ell}^{\alpha}\ast \mu_{j}^{\alpha}\ast \Psi_{k}\|_{2}\lesssim 2^{\gamma_{2}([\alpha]+1-\alpha)(j-\ell)}2^{-(1-\gamma_{2})(-\alpha+1)\ell}2^{-(1-\gamma_{2})\alpha j}2^{(1-\gamma_{2}) k}\|\Omega\|_{L^{1}(\Sigma)}\|f\|_{2}.
\end{align}
Back to the proof of case 2, choosing $\gamma_{1}\in(\frac{\mathbb{Q}+1-\alpha}{\mathbb{Q}+1},1)$ and $\gamma_{2}=\frac{1}{2}$ when $\alpha<1$, and  $\frac{\alpha-1}{[\alpha]}<\gamma_{2}<1$ when $\alpha\geq 1$, respectively, such that $\gamma_{1}\alpha-(1-\gamma_{1})(\mathbb{Q}-\alpha+1)>0$ and $\gamma_{2}([\alpha]+1-\alpha)+(1-\gamma_{2})(-\alpha+1)>0$, we combine the estimates \eqref{l0} and \eqref{j0} to obtain that
\begin{align*}
\|f\ast R^{\alpha}\ast \mu_{j}^{\alpha}\ast \Psi_{k}\|_{2}
&\lesssim \sum_{\ell\leq j}2^{\gamma_{1}\alpha(\ell-j)} 2^{-(1-\gamma_{1})(\mathbb{Q}-\alpha+1)\ell}2^{(1-\gamma_{1})(\mathbb{Q}-\alpha)j}2^{(1-\gamma_{1})k}\|\Omega\|_{L^{1}(\Sigma)}\|f\|_{2}\\
&\qquad+\sum_{\ell\geq j}2^{\gamma_{2}([\alpha]+1-\alpha)(j-\ell)}2^{-(1-\gamma_{2})(-\alpha+1)\ell}2^{-(1-\gamma_{2})\alpha j}2^{(1-\gamma_{2}) k}\|\Omega\|_{L^{1}(\Sigma)}\|f\|_{2}\\
&\lesssim 2^{-\tau(j-k)}\|\Omega\|_{L^{1}(\Sigma)}\|f\|_{2}
\end{align*}
for some constant $\tau>0$.
This finishes the proof of Proposition \ref{keypro}.
\end{proof}

Let $0< \alpha<\mathbb{Q}$ and $\{\mu_{j}^{\alpha}\}_{j\in\mathbb{Z}}$ be a family of Borel measures defined as above. In addition, we let $\{r_{k,j}(t)\}_{k,j\in\mathbb{Z}}$ be a family of functions such that for any $k,j\in\mathbb{Z}$ and $t\in[0,1]$, $r_{k,j}(t)=1$ or $r_{k,j}(t)=-1$. We consider the operators $G_{j}^{\alpha}(t)$, associated with $\{\mu_{k}^{\alpha}\}_{k\in\mathbb{Z}}$ and $\{r_{k,j}(t)\}_{k\in\mathbb{Z}}$, defined by
\begin{align}\label{equa}
G_{j}^{\alpha}(t)f:=\sum_{k\in\mathbb{Z}}r_{k,j}(t)f\ast\Psi_{k-j}\ast R^{\alpha}\ast \mu_{k}^{\alpha}=:\sum_{k\in\mathbb{Z}}G_{j,k}^{\alpha}(t)f.
\end{align}
For simplicity, we extend this definition to the critical case $\alpha=0$ by setting $d\mu_{j}^{0}=A_{j}^{0}K_{0}^{0}dx$ and
\begin{align}\label{equa001}
G_{j}^{0}(t)f:=\sum_{k\in\mathbb{Z}}r_{k,j}(t)f\ast\Psi_{k-j}\ast A_{k}^{0}K_{0}^{0}=:\sum_{k\in\mathbb{Z}}G_{j,k}^{0}(t)f.
\end{align}
We have the following $L^{2}$ decay lemma.
\begin{lemma}\label{ckt}
Let $0< \alpha<\mathbb{Q}$ and $q>1$, then there exist constants $C_{\mathbb{Q},\alpha}$, $C_{\mathbb{Q},q}>0$ and $\tau>0$ (independent of the value of $r_{k,j}(t)$) such that for any $j\in\mathbb{Z}$,
\begin{align}\label{alpha0}
\|G_{j}^{\alpha}(t)f\|_{2}\leq C_{\mathbb{Q},\alpha}2^{-\tau |j|}\|\Omega\|_{L^{1}(\Sigma)}\|f\|_{2},
\end{align}
and
\begin{align}\label{alphaaa}
\|G_{j}^{0}(t)f\|_{2}\leq C_{\mathbb{Q},q}2^{-\tau |j|}\|\Omega\|_{L^{q}(\Sigma)}\|f\|_{2}.
\end{align}
\end{lemma}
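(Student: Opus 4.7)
The approach is to apply the Cotlar--Knapp--Stein almost orthogonality lemma to the operators $T_k f := f \ast \Psi_{k-j} \ast R^{\alpha} \ast \mu_k^{\alpha}$, so that $G_j^{\alpha}(t) = \sum_{k \in \mathbb{Z}} r_{k,j}(t)\, T_k$. Since $|r_{k,j}(t)| = 1$, any $L^2$-operator-norm bound for $\sum_k \epsilon_k T_k$ that is uniform over sign sequences $\{\epsilon_k\} \subset \{\pm 1\}$ yields the conclusion, so the random signs are transparent to this step.

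First I would compute the products $T_k T_{k'}^*$ and $T_k^* T_{k'}$. Using $\widetilde{f \ast g} = \widetilde{g} \ast \widetilde{f}$ together with the symmetries $\widetilde{\Psi_m} = \Psi_m$ (from the choice $\phi = \widetilde{\phi}$) and $\widetilde{R^{\alpha}} = R^{\alpha}$ (which follows from the symmetry of the sub-Laplacian heat kernel), one obtains $T_k^* f = f \ast \widetilde{\mu_k^{\alpha}} \ast R^{\alpha} \ast \Psi_{k-j}$, and therefore
\[
T_k T_{k'}^* f = f \ast \widetilde{\mu_{k'}^{\alpha}} \ast R^{\alpha} \ast \Psi_{k'-j} \ast \Psi_{k-j} \ast R^{\alpha} \ast \mu_k^{\alpha},
\]
with an analogous expression for $T_k^* T_{k'}$ whose middle factor is $\mu_{k'}^{\alpha} \ast \widetilde{\mu_k^{\alpha}}$.

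The heart of the proof is to establish two $L^2 \to L^2$ estimates on $T_k T_{k'}^*$ and combine them by geometric means, in the spirit of the proof of Proposition \ref{keypro}. The first estimate, $\|T_k T_{k'}^*\|_{2\to 2} \lesssim 2^{-2\tau|j|}\|\Omega\|_{L^{1}(\Sigma)}^2$, is insensitive to $|k-k'|$ and follows by viewing the product as a composition of two halves of the form $\Psi_{\bullet - j} \ast R^{\alpha} \ast \mu_\bullet^{\alpha}$, each with operator norm at most $C\, 2^{-\tau|j|}\|\Omega\|_{L^{1}(\Sigma)}$ by Proposition \ref{keypro}; for the left half one invokes the hypothesis that $\widetilde{\mu_{k'}^{\alpha}}$ also satisfies the support, cancellation, and mass conditions, combined with duality. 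The second estimate, $\|T_k T_{k'}^*\|_{2 \to 2} \lesssim 2^{-\delta |k-k'|}$ for some $\delta > 0$, exploits the almost-orthogonality bound $\|\Psi_{k'-j} \ast \Psi_{k-j}\|_{1} \lesssim 2^{-|k-k'|}$, which follows from the mean-zero property of $\Psi_m$ together with Lemma \ref{appen}. Handling this factor in the presence of the flanking copies of $R^{\alpha}$ requires the scale-wise dyadic decomposition $R^{\alpha} = \sum_\ell R_\ell^{\alpha}$ from the proof of Proposition \ref{keypro} and a case analysis mirroring the one there. Taking geometric means yields
\[
\|T_k T_{k'}^*\|_{2\to 2}^{1/2} \leq C\, 2^{-\tau'|j|}\, 2^{-\delta'|k-k'|}\, \|\Omega\|_{L^{1}(\Sigma)} \quad\text{with}\quad \sum_\ell 2^{-\delta'|\ell|} < \infty;
\]
a parallel analysis produces the analogous bound on $\|T_k^* T_{k'}\|_{2\to 2}$, with the $|k-k'|$-decay now coming from Taylor expansion against the $[\alpha]$-order cancellation of $\widetilde{\mu_k^{\alpha}}$ applied to the middle factor $\mu_{k'}^{\alpha} \ast \widetilde{\mu_k^{\alpha}}$. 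Cotlar--Knapp--Stein then delivers
\[
\|G_j^{\alpha}(t)\|_{2 \to 2} \lesssim \sum_\ell 2^{-\delta'|\ell|}\cdot 2^{-\tau'|j|}\, \|\Omega\|_{L^{1}(\Sigma)} \lesssim 2^{-\tau|j|}\, \|\Omega\|_{L^{1}(\Sigma)},
\]
which is \eqref{alpha0}. The endpoint \eqref{alphaaa} is handled by the same Cotlar scheme after replacing $R^{\alpha}$ by $\delta_0$ and $\mu_k^{\alpha}$ by the smooth dyadic piece $A_k^{0} K_0^{0}$; the individual $L^2$ bound for $f \ast \Psi_{k-j} \ast A_k^{0} K_0^{0}$ carrying the factor $\|\Omega\|_{L^{q}(\Sigma)}$ is supplied by Tao's iterated $TT^*$ argument, which leans crucially on the integration-by-parts structure of $A_k^0$ emphasized in Remark \ref{share}.

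The principal technical obstacle will be the second bound above: extracting decay in $|k-k'|$ from $T_k T_{k'}^*$ when the non-$L^1$ kernel $R^{\alpha}$ flanks the Littlewood--Paley product $\Psi_{k'-j} \ast \Psi_{k-j}$. Naive application of Young's inequality fails, and the remedy is a six-case dyadic analysis of $R^{\alpha}$ paralleling the one in the proof of Proposition \ref{keypro}, adapted so that the almost-orthogonality factor $2^{-|k-k'|}$ is absorbed into the Taylor-based estimates at each stage; the analogous decay for $T_k^* T_{k'}$, extracted from $\mu_{k'}^{\alpha} \ast \widetilde{\mu_k^{\alpha}}$ rather than from a Littlewood--Paley pair, is expected to be the most delicate point because there one must use the cancellation of one measure against the other under the fractional-integral kernel.
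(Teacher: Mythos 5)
Your overall scheme --- Cotlar--Knapp--Stein applied to $T_k f=f\ast\Psi_{k-j}\ast R^{\alpha}\ast\mu_k^{\alpha}$, two competing operator bounds combined by geometric means, and the $\alpha=0$ endpoint via the Tao--Sato estimate \eqref{kkkey} --- is the same as the paper's, and your treatment of the product whose middle factor is the Littlewood--Paley pair $\Psi_{k'-j}\ast\Psi_{k-j}$ is sound: the $|j|$-decay comes from Proposition \ref{keypro} and its reflected (adjoint) version, and the $|k-k'|$-decay from $\|\Psi_{k'-j}\ast\Psi_{k-j}\|_{1}\lesssim 2^{-|k-k'|}$ together with the uniform bounds $\|R^{\alpha}\ast\mu_{k}^{\alpha}\|_{1}+\|\widetilde{\mu_{k'}^{\alpha}}\ast R^{\alpha}\|_{1}\lesssim\|\Omega\|_{L^{1}(\Sigma)}$, which do follow from the case analysis in the proof of Proposition \ref{keypro} (e.g.\ \eqref{678} plus Young's inequality, summed over the dyadic pieces of $R^{\alpha}$).

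The genuine gap is in the other product, $T_k^{*}T_{k'}$, whose middle factor is $\mu_{k'}^{\alpha}\ast\widetilde{\mu_{k}^{\alpha}}$ --- exactly the term the paper estimates starting from \eqref{dede}. Your proposed source of $|k-k'|$-decay, ``Taylor expansion against the $[\alpha]$-order cancellation of $\widetilde{\mu_{k}^{\alpha}}$ applied to the middle factor,'' cannot work as stated: both $\mu_{k'}^{\alpha}$ and $\widetilde{\mu_{k}^{\alpha}}$ are merely finite Borel measures built from $\Omega\in L^{1}(\Sigma)$, so neither factor has any smoothness to expand, and the cancellation of one rough measure against another yields no smallness by itself; the gain must be borrowed from the flanking kernels. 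Organizing that borrowing is precisely what the paper does differently: it inserts a double Littlewood--Paley resolution $\sum_{\ell,\ell'}\Psi_{\ell}\ast\Psi_{\ell'}$ between the two measures, regroups in two ways, and then every factor is already controlled either by \eqref{huhihi}--\eqref{huhihi2} and their adjoints (giving $2^{-\tau|k-\ell'|}2^{-\tau|\ell-k'|}$, estimate \eqref{f1}) or by the elementary bound \eqref{cacan} together with Proposition \ref{keypro} (giving $2^{-2\tau|j|}2^{-|\ell-\ell'|}$, estimate \eqref{f2}); the geometric mean plus the triangle inequality $|k-k'|\leq|k-\ell'|+|\ell'-\ell|+|\ell-k'|$ then produce the $2^{-\tau|k-k'|}$ factor, with enough residual decay to sum in $\ell,\ell'$. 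Your sketch defers exactly this step (you call it ``the most delicate point'') without supplying a mechanism, so as written the Cotlar hypothesis for $T_k^{*}T_{k'}$ is not established. To close the gap, either carry out a scale-by-scale analysis of $R_{\ell}^{\alpha}\ast\mu_{k'}^{\alpha}\ast\widetilde{\mu_{k}^{\alpha}}\ast R_{\ell'}^{\alpha}$, Taylor-expanding the smooth pieces $R_{\ell}^{\alpha}\ast\mu_{k'}^{\alpha}$ against the cancellation of $\widetilde{\mu_{k}^{\alpha}}$ (a nontrivial multi-case argument you would have to write out), or simply adopt the paper's insertion trick, which reduces everything to estimates already proved.
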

\begin{proof}
%\textcolor[rgb]{1.00,0.00,0.00}{The first statement was shown in \cite[Lemma 4.2]{FL}} and by Remark \ref{share}, it suffices to show the second one.
We first give the proof of estimate \eqref{alpha0}. By Cotlar-Knapp-Stein Lemma (see \cite{s93}), it suffices to show that:
\begin{align}\label{CKS}
\|(G_{j,k}^{\alpha}(t))^{*}G_{j,k^{\prime}}^{\alpha}(t)\|_{2\rightarrow 2}+\|G_{j,k^{\prime}}^{\alpha}(t)(G_{j,k}^{\alpha}(t))^{*}\|_{2\rightarrow 2}\leq C_{\mathbb{Q}}2^{-2\tau |j|}2^{-\tau|k-k^{\prime}|}\|\Omega\|_{L^{1}(\Sigma)}^{2}.
\end{align}
We only estimate the first term, since the estimate of the second one is similar. Note that
\begin{align}\label{dede}
(G_{j,k}^{\alpha}(t))^{*}G_{j,k^{\prime}}^{\alpha}(t)f&=r_{k^{\prime},j}(t)r_{k,j}(t)f\ast \Psi_{k^{\prime}-j}\ast R^{\alpha}\ast \mu_{k^{\prime}}^{\alpha}\ast \widetilde{\mu_{k}^{\alpha}}\ast R^{\alpha}\ast \Psi_{k-j}\nonumber\\
&=\sum_{\ell\in\mathbb{Z}}\sum_{\ell^{\prime}\in\mathbb{Z}}r_{k^{\prime},j}(t)r_{k,j}(t)f\ast \Psi_{k^{\prime}-j}\ast R^{\alpha}\ast \mu_{k^{\prime}}^{\alpha}\ast\Psi_{\ell}\ast\Psi_{\ell^{\prime}}\ast \widetilde{\mu_{k}^{\alpha}}\ast R^{\alpha}\ast \Psi_{k-j}.
\end{align}

On the one hand, it follows from Young's inequality, Proposition \ref{keypro} and its dual version that
\begin{align}\label{f1}
&\|(f\ast \Psi_{k^{\prime}-j})\ast (R^{\alpha}\ast \mu_{k^{\prime}}^{\alpha}\ast\Psi_{\ell})\ast(\Psi_{\ell^{\prime}}\ast \widetilde{\mu_{k}^{\alpha}}\ast R^{\alpha})\ast \Psi_{k-j}\|_{2}\nonumber\\
&\lesssim \|(f\ast \Psi_{k^{\prime}-j})\ast (R^{\alpha}\ast \mu_{k^{\prime}}^{\alpha}\ast\Psi_{\ell})\ast(\Psi_{\ell^{\prime}}\ast \widetilde{\mu_{k}^{\alpha}}\ast R^{\alpha})\|_{2}\nonumber\\
&\lesssim 2^{-\tau|k-\ell^{\prime}|}\|\Omega\|_{L^{1}(\Sigma)}\|(f\ast \Psi_{k^{\prime}-j})\ast (R^{\alpha}\ast \mu_{k^{\prime}}^{\alpha}\ast\Psi_{\ell})\|_{2}\nonumber\\
&\lesssim 2^{-\tau|k-\ell^{\prime}|}2^{-\tau|\ell-k^{\prime}|}\|\Omega\|_{L^{1}(\Sigma)}^{2}\|f\ast \Psi_{k^{\prime}-j}\|_{2}\nonumber\\
&\lesssim 2^{-\tau|k-\ell^{\prime}|}2^{-\tau|\ell-k^{\prime}|}\|\Omega\|_{L^{1}(\Sigma)}^{2}\|f\|_{2}.
\end{align}

On the other hand, using the cancellation and smoothness properties of $\Psi_{\ell}$ and $\Psi_{\ell^\prime}$ to get that
\begin{align}\label{cacan}
\|\Psi_{\ell}\ast \Psi_{\ell^{\prime}}\|_{1}\lesssim 2^{-|\ell-\ell^{\prime}|}.
\end{align}
This, in combination with Young's inequality, Proposition \ref{keypro} and its dual version, indicates that
\begin{align}\label{f2}
&\|f\ast (\Psi_{k^{\prime}-j}\ast R^{\alpha}\ast \mu_{k^{\prime}}^{\alpha})\ast(\Psi_{\ell}\ast\Psi_{\ell^{\prime}})\ast (\widetilde{\mu_{k}^{\alpha}}\ast R^{\alpha}\ast \Psi_{k-j})\|_{2}\nonumber\\
&\lesssim 2^{-\tau|j|}\|\Omega\|_{L^{1}(\Sigma)}\|f\ast (\Psi_{k^{\prime}-j}\ast R^{\alpha}\ast \mu_{k^{\prime}}^{\alpha})\ast(\Psi_{\ell}\ast\Psi_{\ell^{\prime}})\|_{2}\nonumber\\
&\lesssim 2^{-\tau|j|}2^{-|\ell-\ell^{\prime}|}\|\Omega\|_{L^{1}(\Sigma)}\|f\ast \Psi_{k^{\prime}-j}\ast R^{\alpha}\ast \mu_{k^{\prime}}^{\alpha}\|_{2}\nonumber\\
&\lesssim 2^{-2\tau|j|}2^{-|\ell-\ell^{\prime}|}\|\Omega\|_{L^{1}(\Sigma)}^{2}\|f\|_{2}.
\end{align}

Taking geometric mean of \eqref{f1} with \eqref{f2}, we see that
\begin{align*}
\|f\ast (\Psi_{k^{\prime}-j}\ast R^{\alpha}\ast \mu_{k^{\prime}}^{\alpha})\ast(\Psi_{\ell}\ast\Psi_{\ell^{\prime}})\ast (\widetilde{\mu_{k}^{\alpha}}\ast R^{\alpha}\ast \Psi_{k-j})\|_{2}\lesssim 2^{-\tau|j|}2^{-\tau|k-k^{\prime}|}2^{-\tau|k-\ell^{\prime}|}2^{-\tau|\ell-k^{\prime}|}\|\Omega\|_{L^{1}(\Sigma)}^{2}\|f\|_{2}.
\end{align*}
Combining this inequality with the equality \eqref{dede}, we obtain the estimate \eqref{CKS} and therefore the first statement of Lemma \ref{ckt}.

For the second statement, we recall that Tao \cite[Proposition 4.1]{Tao} applied iterated $(TT^{*})^{N}$ method to obtain the following inequality with $q=\infty$ and then Sato \cite[Lemma 1]{sato} extended it to general $q>1$:
there exist constants $C_{\mathbb{Q},q}>0$ and $\tau>0$ such that for any $j,k\in\mathbb{Z}$,
\begin{align}\label{kkkey}
\|f\ast A_{j}^{0}K_{0}^{0}\ast \Psi_{k}\|_{2}\leq C_{\mathbb{Q},q}2^{-\tau|j-k|}\|f\|_{2}\|\Omega\|_{L^{q}(\Sigma)}.
\end{align}
Thus, the second statement can be shown by repeating the above argument and with Proposition \ref{keypro} replaced by \eqref{kkkey}. This finishes the proof of Lemma \ref{ckt}.
\end{proof}
\begin{remark}
In what follows, $\mu_{j}^{\alpha}$ will be endowed with several exact values. On the one hand, we will choose $d\mu_{j}^{\alpha}:=A_{j}^{\alpha}K_{\alpha}^{0}dx$, $ B_{j}^{\alpha}\Omega dx$ and $B_{j,t}^{\alpha}\Omega dx$, where $B_{j,t}^{\alpha}\Omega$ is defined in Section \ref{nondyadicsection}. On the other hand, we also choose $d\mu_{j}^{\alpha}:=2^{-j(\mathbb{Q}-1+\alpha)}t^{-\mathbb{Q}-\alpha}\Omega d\sigma_{t2^{j}}$, where $\sigma_{r}$ is the unique Radon measure on $r\Sigma:=\{x\in\HH:\rho(x)=r\}$ satisfying for any $f\in L^{1}(\Sigma)$,
\begin{align*}
\int_{\Sigma}f(\theta)d\sigma(\theta)=\int_{r\Sigma}f(r^{-1}\circ \theta)d\sigma_{r}(\theta)r^{-(\mathbb{Q}-1)}.
\end{align*}
In both cases, it can be seen from the proof of Proposition \ref{keypro} and Lemma \ref{ckt} that the bounds on the right hand side of \eqref{huhihi}, \eqref{huhihi2} and \eqref{alpha0} are independent of $t\in[1,2]$. We would like to mention that in the Euclidean setting, for the particular choice  $d\mu_{j}^{0}=B_{j}^{0}\Omega dx$, the corresponding $L^{2}$ decay estimate was obtained in \cite{DR} (see also \cite{HRT}) via a standard argument of Fourier transform and Plancherel's theorem. Actually, in this setting, one can also apply this argument to obtain the corresponding $L^{2}$ decay estimates for the other choices of $\mu_{j}^{\alpha}$ when $0\leq\alpha<\mathbb{Q}$.
\end{remark}

%\smallskip
\section{Unweighted $L^{p}$ estimate}\label{unweightsection}
\setcounter{equation}{0}

Throughout this section, unless we mention the contrary, we suppose that $\Omega$ satisfies the following assumption.

\textbf{Assumption: } Let $0<\alpha<\mathbb{Q}$. Suppose that $\Omega\in L^{1}(\Sigma)$ and satisfies the cancellation condition of order $[\alpha]$.
\subsection{Boundedness of $T_{\Omega,\alpha}$}\label{sssss2}
In this section, we show the $(L^{p}(\HH),L_{\alpha}^{p}(\HH))$ estimate of $T_{\Omega,\alpha}$.
\begin{proposition}\label{prop1}
Let $0<\alpha<\mathbb{Q}$. Suppose that $\Omega\in L^{1}(\Sigma)$ and satisfies the cancellation condition of order $[\alpha]$, then for any $1<p<\infty$,
\begin{align*}
\|T_{\Omega,\alpha}f\|_{L^{p}(\HH)}\leq C_{\mathbb{Q},\alpha,p}\|\Omega\|_{L^{1}(\Sigma)}\|f\|_{L_{\alpha}^{p}(\HH)}
\end{align*}
for some constant $C_{\mathbb{Q},\alpha,p}$ independent of $\Omega$.
\end{proposition}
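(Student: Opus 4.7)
The plan is to reduce the claim to the $L^p$ boundedness of the composition $T_{\Omega,\alpha}(-\Delta_\HH)^{-\alpha/2}$, since $\|f\|_{L^p_\alpha(\HH)} = \|(-\Delta_\HH)^{\alpha/2}f\|_{L^p(\HH)}$. Writing
\[
T_{\Omega,\alpha}(-\Delta_\HH)^{-\alpha/2} f \;=\; \sum_{j\in\ZZ} f \ast R^\alpha \ast A_j^\alpha K_\alpha^0 \;=:\; \sum_{j\in\ZZ} \tilde T_j^\alpha f,
\]
using the smooth dyadic decomposition \eqref{akk0}, I would aim to prove that each $\tilde T_j^\alpha$ is bounded on $L^p$ with a geometric decay $2^{-\tau|j|}$ in $j$, which allows summation.

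To obtain the $L^p$ bound on $\tilde T_j^\alpha$, I would interpolate between two endpoint estimates. For the $L^2$ endpoint, I would insert a Littlewood--Paley resolution $I = \sum_k (S_k - S_{k+1})$ so that $\tilde T_j^\alpha f = \sum_k f \ast \Psi_{k-j} \ast R^\alpha \ast A_k^\alpha K_\alpha^0$ after shifting indices, recognize this as an operator of the form \eqref{equa} (with $\mu_k^\alpha = A_k^\alpha K_\alpha^0\,dx$), and then apply Lemma \ref{ckt} together with Khinchin's inequality (to handle the sum of signs coming from a randomization argument) to conclude
\[
\|\tilde T_j^\alpha f\|_2 \lesssim 2^{-\tau|j|}\|\Omega\|_{L^1(\Sigma)}\|f\|_2.
\]
For the $L^1\to L^{1,\infty}$ endpoint, I would verify that the convolution kernel $R^\alpha \ast A_j^\alpha K_\alpha^0$ of $\tilde T_j^\alpha$ satisfies the H\"ormander integral condition uniformly in $j$ (with norm controlled by $\|\Omega\|_{L^1(\Sigma)}$). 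The kernel $A_j^\alpha K_\alpha^0$ is supported on $\rho(x)\sim 2^j$, has the $[\alpha]$-order cancellation property of Remark \ref{share}, and satisfies \eqref{Aj}; combining this with the smoothness of the Riesz kernel $R^\alpha$ (from \eqref{repre}) and Lemma \ref{appen} to quantify the modulus of continuity of $R^\alpha\ast A_j^\alpha K_\alpha^0$, one obtains a Dini-type bound on its smoothness (in fact with power decay), yielding the weak-type $(1,1)$ bound uniformly in $j$. Marcinkiewicz interpolation then gives $\|\tilde T_j^\alpha f\|_p \lesssim 2^{-\tau'|j|}\|\Omega\|_{L^1(\Sigma)}\|f\|_p$ for $1<p\le 2$, with a modified decay rate still positive.

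The case $2<p<\infty$ follows by duality, once one notes that the adjoint $(\tilde T_j^\alpha)^*$ has the same structural form (via $\widetilde{A_j^\alpha K_\alpha^0}$, which inherits cancellation and size properties from Lemma \ref{inversethm}), so the same $L^2$ decay (from the dual version of Proposition \ref{keypro}) and H\"ormander verification apply. Summing the geometric series in $j$ then yields the proposition. The main obstacles I anticipate are: first, producing a clean H\"ormander-type smoothness estimate for the convolution kernel of $\tilde T_j^\alpha$ that is uniform in $j$ (this is where the cancellation of $A_j^\alpha K_\alpha^0$ must be exploited to offset the non-integrable part of $R^\alpha$, via Taylor expansion from Lemma \ref{taylor} and the auxiliary mean-value Lemma \ref{appen}); and second, organizing the Littlewood--Paley plus Khinchin argument so that the non-commutativity of convolution on $\HH$ does not obstruct the bilinear $TT^*$ reasoning encoded in Lemma \ref{ckt}, which already handles both convolution orders thanks to the two statements of Proposition \ref{keypro}.
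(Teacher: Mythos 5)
There is a genuine gap, and it lies in which decomposition your decay claim is attached to. Your opening display defines $\tilde T_j^\alpha f:=f\ast R^\alpha\ast A_j^\alpha K_\alpha^0$ as the single dyadic-scale piece, $j\in\ZZ$, and the plan is to prove $\|\tilde T_j^\alpha\|_{p\to p}\lesssim 2^{-\tau|j|}\|\Omega\|_{L^1(\Sigma)}$ and sum. That decay is false: by homogeneity one has $R^\alpha\ast A_j^\alpha K_\alpha^0(x)=2^{-\mathbb{Q}j}\,(R^\alpha\ast A_0^\alpha K_\alpha^0)(2^{-j}\circ x)$, an $L^1$-normalized rescaling of the scale-zero kernel, so the single-scale operators all have the same $L^2$ and $L^p$ operator norms (and the same H\"ormander constant) independently of $j$; no absolute summation over $j\in\ZZ$ is possible. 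The decay $2^{-\tau|j|}$ only exists when $j$ indexes the \emph{mismatch between the Littlewood--Paley frequency and the kernel scale}, i.e.\ for the operators $f\mapsto\sum_{k}f\ast\Psi_{k-j}\ast R^\alpha\ast A_k^\alpha K_\alpha^0$, which is exactly the paper's $\tilde T_j^\alpha$ in \eqref{tj1}. Your $L^2$ paragraph silently switches to this object ("after shifting indices"), but that identity does not follow from inserting $I=\sum_k(S_k-S_{k+1})$ into a fixed-scale piece; it is a re-indexing of the full double sum, i.e.\ a different definition of $\tilde T_j^\alpha$, and it is the only definition for which Lemma \ref{ckt} (applied with $r_{k,j}\equiv1$; Khinchin is not needed here) gives the decay you cite, as in Lemma \ref{newww}.

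The weak-type endpoint then inherits this confusion and misses the real technical content. You claim a H\"ormander condition, uniform in $j$, for "the convolution kernel $R^\alpha\ast A_j^\alpha K_\alpha^0$ of $\tilde T_j^\alpha$". For the single-scale kernel that uniform bound is true (again by scaling) but useless, since those pieces do not decay; for the frequency-offset operator that actually carries the $L^2$ decay, the kernel is the full sum $\sum_k\Psi_{k-j}\ast R^\alpha\ast A_k^\alpha K_\alpha^0$, and its H\"ormander constant is \emph{not} uniform in $j$: the paper proves it grows like $(1+j)\|\Omega\|_{L^1(\Sigma)}$ (Lemma \ref{hormander}), by splitting $R^\alpha\ast A_k^\alpha K_\alpha^0$ into a far part (Taylor expansion of the dyadic pieces of $R^\alpha$ against the $[\alpha]$-order cancellation of $A_k^\alpha K_\alpha^0$, giving a Dini modulus uniform in $j$) and a near part, where Lemma \ref{appen} applied to the mollifier $\Delta[2^{k-j}]\phi$ produces the factor $1+j$. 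This linear loss is harmless only because it is interpolated against the geometric $L^2$ decay (Lemma \ref{weak11}), after which duality handles $2<p<\infty$ and the series in $j\geq0$ sums. So your overall architecture (scale decomposition, $L^2$ decay via the Cotlar--Knapp--Stein machinery, a H\"ormander/weak-$(1,1)$ estimate, interpolation, duality, summation) is the right one and matches the paper, but as written the decay is asserted for objects that have none, and the H\"ormander estimate is asserted for the wrong kernel with the wrong (uniform) dependence on $j$; fixing both requires committing to the frequency-offset decomposition and carrying out the $(1+j)$ kernel estimate, which is the heart of the paper's proof.
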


To show Proposition \ref{prop1}, we first note that $S_{j}f\rightarrow f$ as $j\rightarrow -\infty$, which implies that:
\begin{align}\label{sldksl}
V_{k}^{\alpha}(-\Delta_{\HH})^{-\alpha/2}=V_{k}^{\alpha}(-\Delta_{\HH})^{-\alpha/2}S_{k}+\sum_{j=1}^{\infty}V_{k}^{\alpha}(-\Delta_{\HH})^{-\alpha/2}(S_{k-j}-S_{k-(j-1)}).
\end{align}
In this way, $T_{\Omega,\alpha}(-\Delta_{\HH})^{-\alpha/2}=\sum_{j=0}^{\infty}\tilde{T}_{j}^{\alpha}$, where
$
\tilde{T}_{0}^{\alpha}:=\sum_{k\in\mathbb{Z}}V_{k}^{\alpha}(-\Delta_{\HH})^{-\alpha/2}S_{k},
$
, and for $j\geq 1$,
\begin{align}\label{tj1}
\tilde{T}_{j}^{\alpha}:=\sum_{k\in\mathbb{Z}}V_{k}^{\alpha}(-\Delta_{\HH})^{-\alpha/2}(S_{k-j}-S_{k-(j-1)}).
\end{align}
%For $j\in\mathbb{Z}$, we also consider the operator $G_{j}^{\alpha}$ defined by $$G_{j}^{\alpha}f=\sum_{k\in\mathbb{Z}}T_{k}^{\alpha}(-\Delta_{\HH})^{-\alpha/2}(S_{k-j}-S_{k-(j-1)})f=\sum_{k\in\mathbb{Z}}G_{j,k}^{\alpha}f.$$ Then we can further decompose $\tilde{T}_{0}^{\alpha}$ in the following way.
%\begin{align}\label{equa}
%\tilde{T}_{0}^{\alpha}=\sum_{j=-\infty}^{0}G_{j}^{\alpha}.
%\end{align}
\begin{lemma}\label{newww}
There exist  constants $C_{\mathbb{Q},\alpha}>0$ and $\tau>0$, such that for any $j\geq0$,
\begin{align}\label{djdlp}
\|\tilde{T}_{j}^{\alpha}f\|_{2}\leq C_{\mathbb{Q},\alpha}2^{-\tau j}\|\Omega\|_{L^{1}(\Sigma)}\|f\|_{2}.
\end{align}
\end{lemma}
\begin{proof}

Note that $\tilde{T}_{0}^{\alpha}=\sum_{j=-\infty}^{0}G_{j}^{\alpha}$, with $\mu_{j}^{\alpha}$ and $r_{k,j}(t)$ chosen to be $B_{j}^{\alpha}\Omega$ and the constant function $1$, respectively.  This, together with Lemma \ref{ckt}, ends the proof of Lemma \ref{newww}.
\end{proof}

Next we study the kernel of the operator $\tilde{T}_{j}^{\alpha}$. To this end, we denote
$$K_{j}^{\alpha}:=\sum_{k\in\mathbb{Z}}\Delta[2^{k-j}]\phi\ast R^{\alpha}\ast B_{k}^{\alpha}\Omega.$$
\begin{lemma}\label{hormander}
The kernel $K_{j}^{\alpha}$ satisfies the following H\"{o}rmander condition: there exists a constant $C_{\mathbb{Q},\alpha}>0$ such that for any $j\geq 0$,
\begin{align*}
\int_{\rho(x)\geq 2A_{0}\rho(y)}|K_{j}^{\alpha}(y^{-1}x)-K_{j}^{\alpha}(x)|dx\leq C_{\mathbb{Q},\alpha}(1+j)\|\Omega\|_{L^{1}(\Sigma)}.
\end{align*}
\end{lemma}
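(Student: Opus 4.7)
My plan is to decompose $K_j^\alpha = \sum_{k\in\mathbb{Z}} F_k^{\alpha,j}$ with $F_k^{\alpha,j} := \Delta[2^{k-j}]\phi \ast R^\alpha \ast A_k^\alpha K_\alpha^0$, fix the integer $m$ with $2^m \leq \rho(y) < 2^{m+1}$, bring the sum outside the integral via Minkowski, and split the $k$-sum into three regimes based on the relative sizes of $2^k$, $2^{k-j}$ and $\rho(y)$. The logarithmic factor $(1+j)$ will come solely from a middle range of $j+1$ values of $k$; the smooth and the small-scale regimes will each contribute $O(\|\Omega\|_{L^1(\Sigma)})$ via geometric summation.

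In the smooth regime $k \geq m+j+1$, where $2^{k-j} > \rho(y)$, I will apply Lemma \ref{appen} with $Y_i$ landing on the leftmost factor $\Delta[2^{k-j}]\phi$. Dilation homogeneity yields $\|Y_i\Delta[2^{k-j}]\phi\|_1 \lesssim 2^{-(k-j)}$, and $\|R^\alpha \ast A_k^\alpha K_\alpha^0\|_1 \lesssim \|\Omega\|_{L^1(\Sigma)}$ follows from the dyadic splitting $R^\alpha = \sum_\ell R_\ell^\alpha$ together with the $[\alpha]$-order cancellation argument already developed in Proposition \ref{keypro} (cf.\ \eqref{Aj} and \eqref{678}), so the geometric series $\sum_{k \geq m+j+1} \rho(y)2^{-(k-j)} \lesssim 1$ is bounded. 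In the middle regime $m \leq k \leq m+j$, the trivial bound $\int |F_k^{\alpha,j}(y^{-1}x) - F_k^{\alpha,j}(x)|dx \leq 2\|F_k^{\alpha,j}\|_1 \lesssim \|\Omega\|_{L^1(\Sigma)}$ summed over the $(j+1)$ indices produces the claimed $(1+j)\|\Omega\|_{L^1(\Sigma)}$ contribution.

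The main technical step is tail decay in the small-scale regime $k < m$. Using Taylor's inequality (Lemma \ref{taylor}) on $R^\alpha$ at homogeneous order $[\alpha]$, together with the $[\alpha]$-order cancellation of $A_k^\alpha K_\alpha^0$ inherited from $\Omega$ (after Lemma \ref{inversethm}(1) rewrites the left Taylor polynomial $L_x^{R^\alpha}(y^{-1})$ as a polynomial in $y$ of homogeneous degree $\leq [\alpha]$, hence annihilated by $A_k^\alpha K_\alpha^0$), and the pointwise derivative estimates $|X^I R^\alpha(x)| \lesssim \rho(x)^{-(\mathbb{Q}-\alpha)-d(I)}$ coming from \eqref{repre} and the Gaussian bounds on $Y^I p(t,\cdot)$, I will establish
$$|R^\alpha \ast A_k^\alpha K_\alpha^0(x)| \lesssim 2^{k([\alpha]+1-\alpha)}\rho(x)^{-(\mathbb{Q}-\alpha+[\alpha]+1)}\|\Omega\|_{L^1(\Sigma)} \quad \text{for } \rho(x) \gtrsim 2^{k+1}.$$
Since $\Delta[2^{k-j}]\phi$ has mass $1$ and is supported at scale $2^{k-j} \leq 2^k < \rho(y)$, this tail bound transfers to $F_k^{\alpha,j}$ on $\rho(x) \geq 2A_0\rho(y)$; the same estimate for $F_k^{\alpha,j}(y^{-1}x)$ follows by the change of variable $x \mapsto yw$ combined with the quasi-triangle inequality, which forces the new integration region into $\rho(w) \gtrsim \rho(y) \gg 2^k$. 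Integrating $\rho(x)^{-(\mathbb{Q}-\alpha+[\alpha]+1)}$ over $\rho(x) \geq 2A_0\rho(y)$ produces the factor $\rho(y)^{\alpha-[\alpha]-1}$, so the $k$-th summand contributes $(2^k/\rho(y))^{[\alpha]+1-\alpha}\|\Omega\|_{L^1(\Sigma)}$. Because $[\alpha]+1-\alpha > 0$, summing over $k < m$ as a geometric series yields $O(\|\Omega\|_{L^1(\Sigma)})$, and the three regimes combine to the claim. The principal obstacle I anticipate is the cancellation bookkeeping in this tail estimate: one must verify carefully via Lemma \ref{inversethm} that the Taylor polynomial, once rewritten as a polynomial in $y$, is genuinely annihilated by $A_k^\alpha K_\alpha^0$, using that the cancellation of $\Omega$ passes through the defining $t$-integral. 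The remaining computations are dyadic bookkeeping in the spirit of Proposition \ref{keypro}.
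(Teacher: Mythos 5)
Your proof is correct, and it reorganizes the argument along a genuinely different decomposition than the paper's. The paper splits each piece $R^{\alpha}\ast A_{k}^{\alpha}K_{\alpha}^{0}$ spatially into a far part $K^{1}_{\alpha,k}$ and a near part $K^{2}_{\alpha,k}$ via a cut-off at scale $2^{k}$, proves pointwise size and gradient bounds for $\sum_{k}\Delta[2^{k-j}]\phi\ast K^{1}_{\alpha,k}$ (estimates \eqref{cajk2}, \eqref{sisisi}, \eqref{eer2}) so that this summed kernel satisfies a Dini-type condition uniformly in $j$, and treats the near part with Lemma \ref{appen}, the factor $(1+j)$ coming from $\sum_{k}\min\{1,\rho(y)/2^{k-j}\}$ over $2^{k}\gtrsim\rho(y)$. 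You instead keep the $k$-sum outside the integral and split it into three regimes relative to $\rho(y)\sim 2^{m}$: for $k<m$ you use only the cancellation-induced tail bound $|R^{\alpha}\ast A_{k}^{\alpha}K_{\alpha}^{0}(x)|\lesssim 2^{([\alpha]+1-\alpha)k}\rho(x)^{-\mathbb{Q}-1-[\alpha]+\alpha}\|\Omega\|_{L^{1}(\Sigma)}$ (the content of \eqref{cajk2}), integrated term by term, so the geometric factor $(2^{k}/\rho(y))^{[\alpha]+1-\alpha}$ replaces the paper's gradient and Dini-modulus argument; for the $j+1$ intermediate scales you use the uniform bound $\|R^{\alpha}\ast A_{k}^{\alpha}K_{\alpha}^{0}\|_{1}\lesssim\|\Omega\|_{L^{1}(\Sigma)}$ (which indeed follows from \eqref{Aj} and \eqref{678} summed over $\ell$), and this is exactly where $(1+j)$ arises; and for $2^{k-j}>\rho(y)$ you apply Lemma \ref{appen} through $Y_{i}\big(\Delta[2^{k-j}]\phi\ast\cdot\big)=(Y_{i}\Delta[2^{k-j}]\phi)\ast\cdot$, which is the same mechanism the paper uses for its near part. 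What your route buys is that no derivative estimate on $R^{\alpha}\ast A_{k}^{\alpha}K_{\alpha}^{0}$ and no Dini bookkeeping are needed for this lemma; what it gives up is the pointwise estimate \eqref{eeeer} on the summed kernel, which the paper re-uses later (in Lemma \ref{hormander222}, the estimate of ${\rm I_{2}}$, and Lemma \ref{cal}), so the paper's heavier formulation is not wasted. One small point to tighten: your tail bound, and its transfer to $F_{k}^{\alpha,j}$ and to the translated term, really requires $\rho(x)\geq CA_{0}^{2}\kappa_{[\alpha]}2^{k}$ rather than merely $\rho(x)\gtrsim 2^{k+1}$, so the boundedly many scales $k$ with $c\,\rho(y)\leq 2^{k}\leq \rho(y)$, where $c=c(A_{0},\kappa_{[\alpha]})$, should be shifted into the trivially estimated middle regime; this adds only $O(1)$ terms and does not affect the $(1+j)$ bound.
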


\begin{proof}
We will postpone the proof of this lemma to Lemma \ref{hormander222}, in which we will show a stronger statement.
\end{proof}

\begin{lemma}\label{weak11}
For any $1<p<\infty$, there exist  constants $C_{\mathbb{Q},\alpha}$, $C_{\mathbb{Q},\alpha,p}>0$ and $\tau>0$ such that for any $j\geq 0$,
\begin{align}\label{weaktype}
\|\tilde{T}_{j}^{\alpha}f\|_{L^{1,\infty}}\leq C_{\mathbb{Q},\alpha}(1+j)\|\Omega\|_{L^{1}(\Sigma)}\|f\|_{1},
\end{align}
and
\begin{align*}
\|\tilde{T}_{j}^{\alpha}f\|_{p}\leq C_{\mathbb{Q},\alpha,p}2^{-\tau j}\|\Omega\|_{L^{1}(\Sigma)}\|f\|_{p}.
\end{align*}
\end{lemma}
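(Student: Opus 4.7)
The plan is to combine the $L^{2}$ bound with exponential decay (Lemma \ref{newww}) and the H\"ormander condition with linear growth in $j$ (Lemma \ref{hormander}) via a classical Calder\'on--Zygmund argument followed by Marcinkiewicz interpolation and duality.

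First, I would establish the weak-type $(1,1)$ bound \eqref{weaktype} by a standard Calder\'on--Zygmund decomposition on the space of homogeneous type $(\HH,d,dx)$. Given $f\in L^{1}(\HH)$ and a level $\lambda>0$, write $f=g+b$ where $g$ is the good part (with $\|g\|_{\infty}\lesssim \lambda$ and $\|g\|_{1}\leq \|f\|_{1}$) and $b=\sum_{Q}b_{Q}$ is the bad part supported on disjoint Calder\'on--Zygmund balls $\{Q\}$ with $\int b_{Q}=0$ and $\sum_{Q}|Q|\lesssim \lambda^{-1}\|f\|_{1}$. For the good part I use Lemma \ref{newww} (which trivially gives $\|\tilde{T}_{j}^{\alpha}\|_{2\to 2}\lesssim\|\Omega\|_{L^{1}(\Sigma)}$ since $2^{-\tau j}\leq 1$) together with Chebyshev, obtaining a contribution $\lesssim\lambda^{-1}\|\Omega\|_{L^{1}(\Sigma)}\|f\|_{1}$. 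For the bad part, the mean-zero property of each $b_{Q}$ together with the H\"ormander condition of Lemma \ref{hormander} yields, after the usual translation trick on the enlarged ball, a contribution $\lesssim\lambda^{-1}(1+j)\|\Omega\|_{L^{1}(\Sigma)}\|f\|_{1}$. Summing these gives \eqref{weaktype}.

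Next, for $1<p<2$, I apply Marcinkiewicz interpolation between \eqref{weaktype} and the $L^{2}$ bound of Lemma \ref{newww}. Writing $\tfrac{1}{p}=\tfrac{1-\theta}{1}+\tfrac{\theta}{2}$ with $\theta=2-2/p\in(0,1)$, the interpolated bound is
\begin{align*}
\|\tilde{T}_{j}^{\alpha}f\|_{p}\lesssim (1+j)^{1-\theta}\bigl(2^{-\tau j}\bigr)^{\theta}\|\Omega\|_{L^{1}(\Sigma)}\|f\|_{p}\lesssim 2^{-\tau'j}\|\Omega\|_{L^{1}(\Sigma)}\|f\|_{p}
\end{align*}
for any $0<\tau'<\tau\theta$, since exponential decay dominates polynomial growth in $j$.

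For the range $2<p<\infty$ I use duality: $\|\tilde{T}_{j}^{\alpha}\|_{p\to p}=\|(\tilde{T}_{j}^{\alpha})^{*}\|_{p'\to p'}$ with $p'\in(1,2)$. The adjoint has kernel $\overline{K_{j}^{\alpha}(y^{-1}x)}$ viewed with the roles of $x$ and $y$ exchanged, i.e.\ it involves the reflection $\widetilde{K_{j}^{\alpha}}$. The same $L^{2}$ decay holds for $(\tilde{T}_{j}^{\alpha})^{*}$ by Lemma \ref{newww} (equivalently Lemma \ref{ckt}, which is self-dual in $L^{2}$). To run the CZ argument again I need the companion H\"ormander estimate in the other variable,
\begin{align*}
\int_{\rho(x)\geq 2A_{0}\rho(y)}|K_{j}^{\alpha}(xy^{-1})-K_{j}^{\alpha}(x)|\,dx\leq C_{\mathbb{Q},\alpha}(1+j)\|\Omega\|_{L^{1}(\Sigma)},
\end{align*}
which follows by repeating the proof of Lemma \ref{hormander} verbatim, using the right Taylor expansion and right-invariant vector fields $Y_{i}$ in place of $X_{i}$ (the reflection $\widetilde{\Psi_{k}}=\Psi_{k}$ and the symmetric form of Lemma \ref{taylor} make this interchange routine). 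With this in hand, Steps~1--2 applied to $(\tilde{T}_{j}^{\alpha})^{*}$ yield the $L^{p'}$ bound with exponential decay, and duality closes the full range $1<p<\infty$.

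The main obstacle is the first step: organising the CZ argument so that the logarithmic/linear growth $(1+j)$ coming from the H\"ormander condition, rather than some worse power of $j$, is preserved. Once \eqref{weaktype} is in hand with linear growth, the remaining steps are essentially automatic, because even a polynomial blow-up is swamped by the exponential $L^{2}$ decay under any nontrivial interpolation parameter $\theta>0$.
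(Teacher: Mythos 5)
Your proposal is correct and follows essentially the same route as the paper: weak type $(1,1)$ via a Calder\'on--Zygmund decomposition combining the $L^{2}$ decay of Lemma \ref{newww} with the H\"ormander condition of Lemma \ref{hormander}, then Marcinkiewicz interpolation for $1<p<2$ and a duality argument for $p>2$. Your extra remark that duality requires the companion H\"ormander estimate in the other variable (obtained by rerunning the proof of Lemma \ref{hormander} with right Taylor polynomials and right-invariant vector fields) is a correct and slightly more explicit account of what the paper compresses into ``a standard duality argument.''
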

\begin{proof}
By Lemmas \ref{newww} and \ref{hormander}, the first statement is the consequence of a standard argument of Calder\'{o}n-Zygmund decomposition (see for example \cite{JDbook,s93}). Then, the second one can be obtained by interpolating \eqref{djdlp} with \eqref{weaktype}, and by a standard duality argument.
\end{proof}

{\it Proof of Proposition \ref{prop1}}.

By the equality \eqref{sldksl} and Lemma \ref{weak11},
\begin{align*}
\|T_{\Omega,\alpha}(-\Delta_{\HH})^{-\alpha/2}f\|_{p}\leq \sum_{j=0}^{\infty}\|\tilde{T}_{j}^{\alpha}f\|_{p}\lesssim \sum_{j=0}^{\infty}2^{-\tau j}\|\Omega\|_{L^{1}(\Sigma)}\|f\|_{p}\lesssim \|\Omega\|_{L^{1}(\Sigma)}\|f\|_{p}.
\end{align*}
Equivalently,
\begin{align*}
\|T_{\Omega,\alpha}f\|_{L^{p}}\lesssim\|\Omega\|_{L^{1}(\Sigma)}\|f\|_{L_{\alpha}^{p}}.
\end{align*}
This completes the proof of Proposition \ref{prop1}.
\hfill $\square$
\begin{remark}\label{remmm}
With a slight modification, the above arguments can be also applied to give an alternative proof of the $L^{p}$ boundedness of $T_{\Omega,0}$, which was shown in \cite{Tao} and \cite{sato} by two different approaches, provided $\Omega\in  L\log^{+} L(\Sigma)$ and $\int_{\Sigma}\Omega(\theta) d\sigma(\theta)=0$. For the convenience of the readers, we sketch the proof below.
\end{remark}
{\it Proof of Remark \ref{remmm}}. Let $E_{0}:=\{x\in\Sigma:|\Omega(x)|\leq 2\}$ and $E_{m}:=\{x\in\Sigma:2^{2^{m-1}}<|\Omega(x)|\leq2^{2^{m}}\}$ for $m\geq 1$. In addition, for any $m\geq 0$, set
\begin{align*}
\Omega_{m}(x):=\Omega(x)\chi_{E_{m}}(x)-\frac{1}{\sigma(\Sigma)}\int_{E_{m}}\Omega(\theta)d\sigma(\theta).
\end{align*}
Then $\int_{\Sigma}\Omega_{m}(x)d\sigma(x)=0$ and $\Omega(x)=\sum_{m=0}^{\infty}\Omega_{m}(x)$. Let $T_{m,j}^{0}$ be the operator defined in \eqref{hjhbkk} with  $\alpha=0$ and $\Omega$ replaced by $\Omega_{m}$. In addition, for any $j\geq 1$, let
\begin{align*}
\tilde{T}_{m,j}^{+}:=\sum_{k\in\mathbb{Z}}T_{m,k}^{0}(S_{k+2^{j-1}}-S_{k+2^{j}})\ {\rm and}\ \tilde{T}_{m,j}^{-}:=\sum_{k\in\mathbb{Z}}T_{m,k}^{0}(S_{k-2^{j}}-S_{k-2^{j-1}}).
\end{align*}
Then similar to the proof of Lemma \ref{weak11}, it can be verified that for any $j\geq 1$,
\begin{align}\label{newnew}
\|\tilde{T}_{m,j}^{\pm}f\|_{L^{1,\infty}}\lesssim 2^{j}\|\Omega_{m}\|_{L^{1}(\Sigma)}\|f\|_{1}.
\end{align}
This, along with \eqref{alphaaa} (where $\Omega$ is replaced by $\Omega_{m}$) and interpolation, shows that for any $1<p\leq 2$,
\begin{align}\label{uiouoi}
\|\tilde{T}_{m,j}^{\pm}f\|_{p}\lesssim 2^{-\tau 2^{j}}\|\Omega_{m}\|_{L^{\infty}(\Sigma)}\|f\|_{p}
\end{align}
for some constant $\tau>0$.

Moreover, by Young's inequality, for any  $j,k\in\mathbb{Z}$, we have
\begin{align*}
\|T_{m,k}^{0}(S_{k-j}-S_{k-(j-1)})f\|_{2}\lesssim \|\Omega_{m}\|_{L^{1}(\Sigma)}\|f\|_{2}.
\end{align*}
Next, similar to the proof of Lemma \ref{ckt}, it can be verified by Cotlar-Knapp-Stein Lemma  that
\begin{align}
\|\tilde{T}_{m,j}^{\pm}f\|_{2}\lesssim 2^{j}\|\Omega_{m}\|_{L^{1}(\Sigma)}\|f\|_{2},
\end{align}
which interpolating with \eqref{newnew}, in which $\Omega$ is replaced by $\Omega_{m}$, yields that for any $1<p\leq 2$,
\begin{align}\label{yuiiyiu}
\|\tilde{T}_{m,j}^{\pm}f\|_{p}\lesssim 2^{j}\|\Omega_{m}\|_{L^{1}(\Sigma)}\|f\|_{p}.
\end{align}

Combining \eqref{uiouoi} and \eqref{yuiiyiu}, we obtain that
\begin{align*}
\|T_{\Omega,0}f\|_{p}&\leq\sum_{m=0}^{\infty}\sum_{j\geq 1}\|\tilde{T}_{m,j}^{+}f\|_{p}+\|\tilde{T}_{m,j}^{-}f\|_{p}\\
&\lesssim \sum_{m=0}^{\infty}\sum_{2^{j}\leq \lambda 2^{m}}2^{j}\|\Omega_{m}\|_{L^{1}(\Sigma)}\|f\|_{p}+\sum_{m=0}^{\infty}\sum_{2^{j}>\lambda 2^m}2^{-\tau 2^{j}}\|\Omega_{m}\|_{L^{\infty}(\Sigma)}\|f\|_{p}\\
&\lesssim \sum_{m=0}^{\infty}\lambda 2^m \|\Omega_{m}\|_{L^{1}(\Sigma)}\|f\|_{p}+\sum_{m=0}^{\infty}2^{(-\tau\lambda+1)2^m}\|f\|_{p}\\
&\lesssim\left(\int_{\Sigma}|\Omega(\theta)|\log(2+|\Omega(\theta)|)d\sigma(\theta)+1\right) \|f\|_{p},
\end{align*}
where we choose $\lambda>1/\tau$. This, along with a duality argument, shows the proof of Remark \ref{remmm}.
\hfill $\square$
\subsection{Fundamental reduction of Theorem \ref{main1}}\label{Basic reduction}
To begin with, denote
\begin{align}\label{defi1}
M_{\Omega,\alpha}f(x):=\sup\limits_{k>0,\varepsilon\in [2^{k},2^{k+1})}\left|\int_{\varepsilon<\rho(y^{-1}x)\leq2^{k+1}}\frac{\Omega(y^{-1}x)}{\rho(y^{-1}x)^{\mathbb{Q}+\alpha}}f(y)dy\right|,
\end{align}
and
\begin{align}\label{defi2}
T_{\Omega,\alpha}^{k}f(x):=\int_{\rho(y^{-1}x)> 2^{k+1}}\frac{\Omega(y^{-1}x)}{\rho(y^{-1}x)^{\mathbb{Q}+\alpha}}f(y)dy.
\end{align}
Since there exists a unique $k\in\mathbb{Z}$ such that $2^{k}\leq\varepsilon< 2^{k+1}$, we have
\begin{align}\label{maximalcontrol}
T_{\Omega,\alpha}^{\#}f(x)&\leq\sup\limits_{\varepsilon>0}\left|\int_{\varepsilon<\rho(y^{-1}x)\leq 2^{[\log \varepsilon]+1}}\frac{\Omega(y^{-1}x)}{\rho(y^{-1}x)^{\mathbb{Q}+\alpha}}f(y)dy\right|+\sup\limits_{\varepsilon>0}\left|\int_{\rho(y^{-1}x)> 2^{[\log \varepsilon]+1}}\frac{\Omega(y^{-1}x)}{\rho(y^{-1}x)^{\mathbb{Q}+\alpha}}f(y)dy\right|\nonumber\\
&=M_{\Omega,\alpha}f(x)+\sup\limits_{k\in\mathbb{Z}}|T_{\Omega,\alpha}^{k}f(x)|.
\end{align}
Hence, Theorem \ref{main1} can be reduced to showing the following two propositions.
\begin{proposition}\label{nondyadicprop}
For any $1<p<\infty$, there exists a constant $C_{\mathbb{Q},\alpha,p}>0$ such that for any $j\geq 0$,
\begin{align}\label{nondyadic}
\|M_{\Omega,\alpha}(-\Delta_{\HH})^{-\alpha/2}f\|_{p}\leq C_{\mathbb{Q},\alpha,p}\|\Omega\|_{L^{1}(\Sigma)}\|f\|_{p}.
\end{align}
\end{proposition}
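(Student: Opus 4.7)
The plan is to linearize the supremum in $M_{\Omega,\alpha}$, insert a Littlewood--Paley decomposition between $(-\Delta_{\HH})^{-\alpha/2}$ and the kernel of $M_{\Omega,\alpha}$, and interpolate a weak-type $(1,1)$ H\"ormander bound against an $L^{2}$ bound with exponential decay in the Littlewood--Paley index. Concretely, set
\[
B_{k,t}^{\alpha}\Omega(x):=\frac{\Omega(x)}{\rho(x)^{\mathbb{Q}+\alpha}}\chi_{t2^{k}<\rho(x)\leq 2^{k+1}}(x),\qquad V_{k,t}^{\alpha}f:=f\ast B_{k,t}^{\alpha}\Omega,
\]
so that $M_{\Omega,\alpha}f=\sup_{k\in\mathbb{Z},\,t\in[1,2)}|V_{k,t}^{\alpha}f|$. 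Writing $g:=(-\Delta_{\HH})^{-\alpha/2}f=\sum_{j\in\mathbb{Z}}g\ast\Psi_{j}$ via the Calder\'on reproducing identity and re-indexing, I would decompose
\[
V_{k,t}^{\alpha}g=\sum_{j\in\mathbb{Z}}V_{k,j,t}^{\alpha}g,\qquad V_{k,j,t}^{\alpha}g:=g\ast\Psi_{k-j}\ast R^{\alpha}\ast B_{k,t}^{\alpha}\Omega,
\]
and reduce to proving $\|\sup_{k,t}|V_{k,j,t}^{\alpha}g|\|_{p}\lesssim 2^{-\tau|j|}\|\Omega\|_{L^{1}(\Sigma)}\|g\|_{p}$ for some $\tau>0$ and all $1<p<\infty$, then summing geometrically in $j$.

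For the $L^{2}$ endpoint I would linearize the supremum in $k$ by a measurable selector, introduce Rademacher signs $\{r_{k,j}(t)\}$, and apply Khinchin's inequality; this reduces matters to bounding the randomized sum $G_{j}^{\alpha}(t)g$ defined in \eqref{equa} with $\mu_{k}^{\alpha}=B_{k,t}^{\alpha}\Omega\,dx$. The partial-annulus kernel $B_{k,t}^{\alpha}\Omega$ does not inherit the full $[\alpha]$-cancellation of $\Omega$, so I first subtract its moments against the homogeneous polynomials of degree $\leq[\alpha]$; the correction is a radial kernel of magnitude $\lesssim 2^{-\alpha k}\|\Omega\|_{L^{1}(\Sigma)}$ supported on the scale-$2^{k}$ annulus, so its contribution to the maximal operator is pointwise dominated by a constant multiple of $\|\Omega\|_{L^{1}(\Sigma)}\cdot Mf$, which is bounded on every $L^{p}$. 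The remaining cancelling piece satisfies the hypotheses preceding Proposition \ref{keypro}, so Lemma \ref{ckt} yields the required $L^{2}$ bound uniformly in $t\in[1,2)$. For the $L^{1}\to L^{1,\infty}$ endpoint, I would verify that the vector-valued convolution kernel $(\Psi_{k-j}\ast R^{\alpha}\ast B_{k,t}^{\alpha}\Omega)_{k,t}$ satisfies a H\"ormander-type condition with constant $\lesssim(1+|j|)\|\Omega\|_{L^{1}(\Sigma)}$, along the lines of Lemma \ref{hormander}: split $R^{\alpha}\ast B_{k,t}^{\alpha}\Omega$ into a far-from-origin part (handled via Taylor's inequality and Lemma \ref{inversethm} exploiting the $[\alpha]$-cancellation) and a near-origin part (handled using Lemma \ref{appen}), then telescope in $k$. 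A vector-valued Calder\'on--Zygmund decomposition then furnishes the weak-type $(1,1)$ bound.

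Interpolating between these two endpoints absorbs the polynomial factor $(1+|j|)$ into a slightly worse $2^{-\tau'|j|}$, and dualizing covers all $1<p<\infty$; summing in $j$ finishes the proof. The main obstacle is the lack of intrinsic $[\alpha]$-cancellation of $B_{k,t}^{\alpha}\Omega$, since the angular integration on $\Sigma$ is effectively restricted to a subannulus depending on $t$; this is resolved by the moment-subtraction trick described above, and all estimates are arranged to be uniform in $t\in[1,2)$ as already observed in the remark following Lemma \ref{ckt}.
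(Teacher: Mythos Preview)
Your proposal has a misconception and a genuine gap.

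First, the misconception: the kernel $B_{k,t}^{\alpha}\Omega(x)=\frac{\Omega(x)}{\rho(x)^{\mathbb{Q}+\alpha}}\chi_{t2^{k}<\rho(x)\leq 2^{k+1}}$ \emph{does} inherit the full $[\alpha]$-order cancellation of $\Omega$. The truncation $t2^{k}<\rho(x)\leq 2^{k+1}$ is purely radial; for any homogeneous polynomial $P_{m}$ of degree $m\leq[\alpha]$, the polar decomposition gives
\[
\int_{\HH}B_{k,t}^{\alpha}\Omega(x)\,P_{m}(x)\,dx=\int_{t2^{k}}^{2^{k+1}}r^{m-\alpha-1}\,dr\int_{\Sigma}\Omega(\theta)P_{m}(\theta)\,d\sigma(\theta)=0,
\]
so your moment-subtraction step is unnecessary. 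The paper indeed lists $B_{j,t}^{\alpha}\Omega\,dx$ among the admissible measures for Proposition \ref{keypro} and Lemma \ref{ckt}.

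Second, the real gap: you do not explain how to control the supremum over the \emph{continuous} parameter $t\in[1,2)$ in the $L^{2}$ estimate. Linearizing $\sup_{k}$ and applying Khinchin gives, for each fixed truncation $t$, a bound on $\|\sup_{k}|V_{k,j,t}^{\alpha}f|\|_{2}$ uniform in $t$; but this is $\sup_{t}\|\sup_{k}|\cdot|\|_{2}$, not $\|\sup_{t}\sup_{k}|\cdot|\|_{2}$, and these are different. You cannot replace $\sup_{t}$ by an $\ell^{2}$-sum, and linearizing $t\mapsto t(x)$ destroys the convolution structure needed for Cotlar--Knapp--Stein. The paper handles this via the Jones--Seeger--Wright inequality
\[
\sup_{t\in[1,2)}\sup_{k}|V_{k,j,t}^{\alpha}f(x)|\leq\sum_{k}\Bigl(\int_{1}^{2}|V_{k,j,s}^{\alpha}f(x)|^{2}\,ds\Bigr)^{1/4}\Bigl(\int_{1}^{2}\Bigl|\tfrac{d}{ds}V_{k,j,s}^{\alpha}f(x)\Bigr|^{2}\,ds\Bigr)^{1/4},
\]
followed by Cauchy--Schwarz; this reduces matters to two square functions, one built from $B_{k,s}^{\alpha}\Omega$ and one from the $s$-derivative, which is a spherical measure $2^{-(\mathbb{Q}-1+\alpha)k}s^{-\mathbb{Q}-\alpha}\Omega\,d\sigma_{s2^{k}}$. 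Both fit the hypotheses of Lemma \ref{ckt}, and Khinchin applies to each. The same fundamental-theorem-of-calculus device is used for the H\"ormander condition (Lemma \ref{hormander222}) to pass the supremum over $t$ through the integral.
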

\begin{proposition}\label{dyadicprop}
For any $1<p<\infty$, there exists a constant $C_{\mathbb{Q},\alpha,p}>0$ such that for any $j\geq 0$,
\begin{align}\label{dyadic}
\Big\|\sup\limits_{k\in\mathbb{Z}}|T_{\Omega,\alpha}^{k}(-\Delta_{\HH})^{-\alpha/2}f|\Big\|_{p}\leq C_{\mathbb{Q},\alpha,p}\|\Omega\|_{L^{1}(\Sigma)}\|f\|_{p}.
\end{align}
\end{proposition}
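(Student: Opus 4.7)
The plan is to decompose $T_{\Omega,\alpha}^{k}(-\Delta_{\HH})^{-\alpha/2}$ into a low-frequency piece dominated (pointwise or in $L^{p}$) by the Hardy-Littlewood maximal function $Mf$, together with a family of high-frequency pieces $G_{k,s,j}^{\alpha}f$ indexed by two parameters $s,j\geq 1$, each admitting $L^{p}$-bounds with geometric decay in $s+j$ that can then be summed.

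\textbf{Decomposition.} Using the non-smooth dyadic decomposition of Section 2 (which is needed for $\alpha>0$ by Remark \ref{share}), I would write $T_{\Omega,\alpha}^{k}f=\sum_{j\geq 1} f*B_{k+j}^{\alpha}\Omega$. Inserting the Littlewood-Paley identity $f=S_{k}f+\sum_{s\geq 1} f*\Psi_{k-s}$ then yields
\begin{align*}
T_{\Omega,\alpha}^{k}(-\Delta_{\HH})^{-\alpha/2}f=\sum_{j\geq 1}(S_{k}f)*R^{\alpha}*B_{k+j}^{\alpha}\Omega+\sum_{s,j\geq 1}G_{k,s,j}^{\alpha}f,
\end{align*}
where $G_{k,s,j}^{\alpha}f:=(f*\Psi_{k-s})*R^{\alpha}*B_{k+j}^{\alpha}\Omega$. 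The first sum (the $I_{2}$ term in Figure 1) should be controlled by $\|\Omega\|_{L^{1}(\Sigma)}Mf$: using the $[\alpha]$-order cancellation of $\Omega$, the estimate \eqref{Rie} for $R^{\alpha}$, and arguments identical in spirit to those used to derive \eqref{678} in the proof of Proposition \ref{keypro}, one verifies that the $L^{1}$-norm of the composite kernel $R^{\alpha}*(\chi_{\rho>2^{k+1}}K_{\alpha})$ is $\lesssim\|\Omega\|_{L^{1}(\Sigma)}$ uniformly in $k$, and then the desired $L^{p}$-bound follows from the $L^{p}$-boundedness of $M$.

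\textbf{Endpoint bounds for $\sup_{k}|G_{k,s,j}^{\alpha}f|$.} For each fixed $(s,j)$, I would establish
\begin{align*}
\big\|\sup_{k\in\mathbb{Z}}|G_{k,s,j}^{\alpha}f|\big\|_{L^{1,\infty}(\HH)}&\leq C(s,j)\|\Omega\|_{L^{1}(\Sigma)}\|f\|_{1},\\
\big\|\sup_{k\in\mathbb{Z}}|G_{k,s,j}^{\alpha}f|\big\|_{L^{2}(\HH)}&\lesssim 2^{-\tau(s+j)}\|\Omega\|_{L^{1}(\Sigma)}\|f\|_{2},
\end{align*}
with $C(s,j)$ of at most polynomial growth. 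The $L^{2}$ bound follows from combining Khinchin's inequality with Lemma \ref{ckt}: via $\sup_{k}|a_{k}|\leq(\sum_{k}|a_{k}|^{2})^{1/2}$ and the choice $\mu_{k}^{\alpha}:=B_{k+j}^{\alpha}\Omega$ (permitted by the Remark following Lemma \ref{ckt}), one obtains $\sum_{k}\|G_{k,s,j}^{\alpha}f\|_{2}^{2}\lesssim 2^{-2\tau(s+j)}\|\Omega\|_{L^{1}(\Sigma)}^{2}\|f\|_{2}^{2}$, with the exponent $s+j$ reflecting the total frequency gap between $\Psi_{k-s}$ and $B_{k+j}^{\alpha}\Omega$. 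The $L^{1,\infty}$ bound is obtained by verifying a H\"{o}rmander-type condition for the $\ell^{\infty}$-valued kernel $\{K_{k,s,j}^{\alpha}\}_{k}$ of the maximal operator, in analogy with Lemma \ref{hormander}, using Taylor's inequality (Lemma \ref{taylor}), the mean-value bound (Lemma \ref{appen}), and the cancellation of $\Omega$. Interpolating the two endpoint bounds gives $\|\sup_{k}|G_{k,s,j}^{\alpha}f|\|_{p}\lesssim 2^{-\tau'(s+j)}\|\Omega\|_{L^{1}(\Sigma)}\|f\|_{p}$ for $1<p\leq 2$, and summing over $s,j\geq 1$ produces the desired estimate; $p>2$ is handled by duality.

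\textbf{Main obstacle.} I expect the principal technical difficulty to lie in the H\"{o}rmander-type verification for the vector-valued kernel $\{K_{k,s,j}^{\alpha}\}_{k}$ of the maximal operator, with polynomial control in $s+j$. Because convolution on $\HH$ is non-commutative, one must choose the correct side (left or right) for each Taylor expansion and handle the asymmetry between $X^{I}$- and $Y^{I}$-differentiations (as already seen in the two halves of Proposition \ref{keypro}); any looseness here would overwhelm the geometric gain from the $L^{2}$ side after interpolation and prevent the summation in $s,j$ from converging.
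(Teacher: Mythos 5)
Your treatment of the high-frequency double sum $\sum_{s,j\geq 1}G_{k,s,j}^{\alpha}$ is broadly in the spirit of the paper's handling of its term ${\rm I_3}$ (an $L^{2}$ bound with decay $2^{-\tau(s+j)}$ via Khinchin and Cotlar--Knapp--Stein, a H\"ormander-type weak $(1,1)$ bound growing linearly in $s+j$, then interpolation), and that part could be made to work. The proposal breaks at the piece you dismiss as easy: the low-frequency term $\sum_{j\geq1}(S_{k}f)\ast R^{\alpha}\ast B_{k+j}^{\alpha}\Omega=(S_{k}f)\ast R^{\alpha}\ast(\chi_{\rho>2^{k+1}}K_{\alpha})$. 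First, the claimed uniform bound $\|R^{\alpha}\ast(\chi_{\rho>2^{k+1}}K_{\alpha})\|_{1}\lesssim\|\Omega\|_{L^{1}(\Sigma)}$ is false: by the dilation structure one has $R^{\alpha}\ast B_{m}^{\alpha}\Omega(x)=2^{-\mathbb{Q}m}\big(R^{\alpha}\ast B_{0}^{\alpha}\Omega\big)(2^{-m}\circ x)$, so $\|R^{\alpha}\ast B_{m}^{\alpha}\Omega\|_{1}$ is the same for every $m$ (the cancellation of $\Omega$, as in \eqref{678}, only tames the far field of each annular piece, not its unit $L^{1}$ mass at its own scale), hence summing over the infinitely many annuli $j\geq1$ diverges; the composite tail kernel decays only like $\rho(x)^{-\mathbb{Q}}$ at infinity and is not integrable. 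Second, even a uniform $L^{1}$ bound would only control each fixed $k$ by Young's inequality; it says nothing about the supremum in $k$, and there is no pointwise domination by $Mf$ here because the mollifier $\Delta[2^{k-1}]\phi$ acts at scale $2^{k}$, strictly below the scales $2^{k+j}$ where the kernel lives, so it cannot regularize the rough kernel on its support. (Contrast the paper's ${\rm I_2}$, with which you identify this term: there the kernel is the \emph{local} part $\rho\leq 2^{k+1}$ and the mollifier scale dominates the support scale, which is exactly what makes the Taylor-expansion/$Mf$ argument \eqref{fjfjfjfj}--\eqref{estimateI222} work.)

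In fact your low-frequency term contains all scale-matched interactions (the part of $f$ living at scale $2^{k+j}$ against the annulus at scale $2^{k+j}$, for every $j\geq1$), i.e.\ the core of the maximal truncated singular integral itself; $\sup_{k}|T_{\Omega,\alpha}^{k}(-\Delta_{\HH})^{-\alpha/2}S_{k}f|$ is morally as hard as the object being estimated, so the decomposition merely relocates the main difficulty into the term treated as an error. This is precisely why the paper does not split the input $f$; it splits the truncation, writing $T_{\Omega,\alpha}^{k}f=T_{\Omega,\alpha}f\ast\Delta[2^{k}]\phi-\sum_{s\leq 0}f\ast B_{k+s}^{\alpha}\Omega\ast\Delta[2^{k}]\phi+\sum_{s\geq1}f\ast B_{k+s}^{\alpha}\Omega\ast(\delta_{0}-\Delta[2^{k}]\phi)$, so that the dangerous diagonal part becomes $\sup_{k}|T_{\Omega,\alpha}(-\Delta_{\HH})^{-\alpha/2}f\ast\Delta[2^{k}]\phi|\lesssim M\big(T_{\Omega,\alpha}(-\Delta_{\HH})^{-\alpha/2}f\big)$ and is controlled by the already-proven Proposition \ref{prop1} --- an ingredient your scheme never invokes and cannot do without --- while the scale mismatch is absorbed by the $(\delta_{0}-\Delta[2^{k}]\phi)$ correction, which carries the $L^{2}$ decay in $s$. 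To repair your argument you would have to reintroduce the mollified full operator as in the paper's ${\rm I_1}$, at which point the proof becomes essentially the paper's.
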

We will give the proof of Propositions \ref{nondyadicprop} and \ref{dyadicprop} in Sections \ref{nondyadicsection} and \ref{dyadicsection}, respectively.
\subsection{$L^{p}$ estimate of non-dyadic maximal function}\label{nondyadicsection}
In this subsection, we give the proof of the Proposition \ref{nondyadicprop}. To begin with, for any $t\in[1,2)$ and $j\in\mathbb{N}$, denote
\begin{align*}
B_{j,t}^{\alpha}\Omega(x):=\frac{\Omega(x)}{\rho(x)^{\mathbb{Q}+\alpha}}\chi_{t2^{j}< \rho(x)\leq 2^{j+1}},
\end{align*}
and $V_{j,t}^{\alpha}f:=f\ast B_{j,t}^{\alpha}\Omega$.
Noting that $S_{j}f\rightarrow f$ as $j\rightarrow -\infty$, we have the following identity:
\begin{align}\label{vkj}
V_{k,t}^{\alpha}(-\Delta_{\HH})^{-\alpha/2}=V_{k,t}^{\alpha}(-\Delta_{\HH})^{-\alpha/2}S_{k}+\sum_{j=1}^{\infty}V_{k,t}^{\alpha}(-\Delta_{\HH})^{-\alpha/2}(S_{k-j}-S_{k-(j-1)})=:\sum\limits_{j=0}^{\infty}V_{k,j,t}^{\alpha},
\end{align}
where $V_{k,0,t}^{\alpha}:=V_{k,t}^{\alpha}(-\Delta_{\HH})^{-\alpha/2}S_{k}$ and $V_{k,j,t}^{\alpha}:=V_{k,t}^{\alpha}(-\Delta_{\HH})^{-\alpha/2}(S_{k-j}-S_{k-(j-1)})$ for $j\geq 1$. Denote
\begin{align}\label{denotee}
K_{k,j,t}^{\alpha}:=\Delta[2^{k-j}]\phi\ast R^{\alpha}\ast B_{k,t}^{\alpha}\Omega.
\end{align}
\begin{lemma}\label{hormander222}
The kernel $K_{k,j,t}^{\alpha}$ satisfies the following uniform H\"{o}rmander condition: there exists a constant $C_{\mathbb{Q},\alpha}>0$ such that for any $j\geq 0$,
\begin{align*}
\int_{\rho(x)\geq 2A_{0}\rho(y)}\sup\limits_{t\in[1,2)}\sum_{k\in\mathbb{Z}}|K_{k,j,t}^{\alpha}(y^{-1}x)-K_{k,j,t}^{\alpha}(x)|dx\leq C_{\mathbb{Q},\alpha}(1+j)\|\Omega\|_{L^{1}(\Sigma)}.
\end{align*}
\end{lemma}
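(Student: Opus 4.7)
The plan is to imitate the proof of Lemma \ref{hormander} closely, substituting $A_k^\alpha K_\alpha^0$ with $B_{k,t}^\alpha \Omega$. The substitution is benign: $B_{k,t}^\alpha \Omega$ is supported in the annulus $\{x\in\HH:\rho(x)\sim 2^k\}$, satisfies the $[\alpha]$-order cancellation condition (via polar coordinates, the cancellation of $\Omega$ on $\Sigma$, and Lemma \ref{inversethm}), and obeys $\|B_{k,t}^\alpha\Omega\|_1 \lesssim 2^{-\alpha k}\|\Omega\|_{L^1(\Sigma)}$ uniformly in $t \in [1,2)$. Accordingly I will split
$$R^\alpha \ast B_{k,t}^\alpha\Omega = K_{k,t}^{1,\alpha} + K_{k,t}^{2,\alpha},$$
where $K_{k,t}^{2,\alpha}$ is the portion cut off by $\eta_0\bigl(\cdot/(A_0^2\kappa_{[\alpha]}2^{k+4})\bigr)$ near the origin and $K_{k,t}^{1,\alpha}$ is the complement, exactly mirroring the decomposition in Lemma \ref{hormander}.

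For the far piece $K_{k,t}^{1,\alpha}$, a Taylor expansion of the smooth building block $\zeta$ of $R^\alpha$ together with the $[\alpha]$-order cancellation of $B_{k,t}^\alpha\Omega$ yields, uniformly in $t$, the pointwise bound
$$|K_{k,t}^{1,\alpha}(x)| \lesssim \frac{2^{([\alpha]+1-\alpha)k}}{\rho(x)^{\mathbb{Q}+1+[\alpha]-\alpha}}\|\Omega\|_{L^1(\Sigma)}\,\chi_{\rho(x) \gtrsim 2^k}(x),$$
and its $X_i$-derivative analogue picks up an additional $\rho(x)^{-1}$ factor. The crucial observation is that for each fixed $x$ only the scales with $2^k \lesssim \rho(x)$ actually contribute to the support of $\Delta[2^{k-j}]\phi\ast K_{k,t}^{1,\alpha}$, so the supremum over $k$ is attained at $2^k \sim \rho(x)$, giving the size envelope $\sup_{t,k}|\Delta[2^{k-j}]\phi\ast K_{k,t}^{1,\alpha}(x)| \lesssim \|\Omega\|_{L^1(\Sigma)}\rho(x)^{-\mathbb{Q}}$ and the gradient envelope with one more factor $\rho(x)^{-1}$. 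Combining these via the mean value theorem on $\HH$ produces a Dini-type bound of the form $\|\Omega\|_{L^1(\Sigma)}\rho(x)^{-\mathbb{Q}}\omega(\rho(y)/\rho(x))$ with $\omega(s) \leq \min\{1,s\}$, whose integral over $\rho(x)\geq 2A_0\rho(y)$ is $O(\|\Omega\|_{L^1(\Sigma)})$.

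For the near piece $K_{k,t}^{2,\alpha}$ I bypass the supremum with the crude inequality $\sup_{t,k}|a_{k,t}| \leq \sum_k \sup_t |a_{k,t}|$, reducing matters to the $\sum_k$-version already handled in Lemma \ref{hormander}. Since $B_{k,t}^\alpha\Omega$ has the same uniform $L^1$ bound as $A_k^\alpha K_\alpha^0$, Young's inequality gives $\sup_t \|K_{k,t}^{2,\alpha}\|_1 \lesssim \|\Omega\|_{L^1(\Sigma)}$ uniformly in $k$. The rest then goes through verbatim: Lemma \ref{appen} applied to the smooth factor $\phi$ produces the factor $\min\{1,\rho(y)/2^{k-j}\}$, and only the scales with $2^{k+7}A_0^3\kappa_{[\alpha]} \geq \rho(y)$ contribute, which yields the logarithmic loss $(1+j)\|\Omega\|_{L^1(\Sigma)}$.

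The only subtlety is the uniformity in $t \in [1,2)$, which is automatic since every constant appearing in the analysis of $B_{k,t}^\alpha \Omega$ is independent of $t$; the conceptual point I expect to check most carefully is that in the far regime the sup over $k$ is truly free (only $O(1)$ scales see each point $x$), while in the near regime absorbing the sup into a sum is harmless because it already costs only the $(1+j)$ factor that appears in Lemma \ref{hormander}.
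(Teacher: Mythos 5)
Your argument is correct, but it takes a genuinely different route from the paper's. The paper removes the double supremum by the fundamental theorem of calculus, writing $K_{k,j,t}^{\alpha}=-\int_{t}^{2}\frac{d}{ds}K_{k,j,s}^{\alpha}\,ds$, so that the left-hand side is dominated by $\int_{1}^{2}\sum_{k}\int(\cdots)\,dx\,ds$; the price is that the $s$-derivative replaces $B_{k,s}^{\alpha}\Omega$ by (suitably normalized) spherical measures $\Omega\,d\sigma_{s2^{k}}$ as in \eqref{recalll}, and the far/near splitting and the estimates \eqref{firstpart11}, \eqref{secondpart22} are then carried out for those measure kernels. You instead keep $R^{\alpha}\ast B_{k,t}^{\alpha}\Omega$ itself, exploit that its support, $[\alpha]$-order cancellation and $L^{1}$ bound are uniform in $t\in[1,2)$, control the far piece by $t$- and $k$-uniform pointwise envelopes (only scales $2^{k}\lesssim\rho(x)$ meet the support, and the factors $2^{([\alpha]+1-\alpha)k}$ sum geometrically, exactly as in \eqref{eer1}--\eqref{ererer1}), and reduce the near piece to the sum over $k$ as in \eqref{1234}, which is where the $(1+j)$ loss arises in both proofs. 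Your route avoids the spherical measures altogether and is more elementary; the paper's FTC device has the advantage that the same $s$-derivative kernels are needed anyway for the $L^{2}$ estimate in Lemma \ref{strong22}, so the two computations are shared. One small point you should make explicit in the near piece: since the supremum over $t$ sits inside the $dx$-integral, the quantity actually needed is $\big\|\sup_{t\in[1,2)}|K_{k,t}^{2,\alpha}|\big\|_{1}\lesssim\|\Omega\|_{L^{1}(\Sigma)}$ rather than $\sup_{t}\|K_{k,t}^{2,\alpha}\|_{1}$; this follows at once from the pointwise majorization $|B_{k,t}^{\alpha}\Omega|\leq|B_{k}^{\alpha}\Omega|$ for all $t\in[1,2)$, so that $\sup_{t}|K_{k,t}^{2,\alpha}|$ is dominated by $\big(|R^{\alpha}|\ast|B_{k}^{\alpha}\Omega|\big)\,\eta_{0}\big(\cdot/(A_{0}^{2}\kappa_{[\alpha]}2^{k+4})\big)$ and the $L^{1}$ computation from Lemma \ref{hormander} applies verbatim, after which Lemma \ref{appen} yields the factor $\min\{1,\rho(y)/2^{k-j}\}$ and the restriction $2^{k+7}A_{0}^{3}\kappa_{[\alpha]}\geq\rho(y)$ gives the $(1+j)$ bound.
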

\begin{proof}
By the fundamental theorem of calculus, we note that
\begin{align*}
K_{k,j,t}^{\alpha}(x)=-\int_{t}^{2}\frac{d}{ds}K_{k,j,s}^{\alpha}(x)ds.
\end{align*}
Hence,

\begin{align*}
&\int_{\rho(x)\geq 2A_{0}\rho(y)}\sup\limits_{t\in[1,2)}\sum_{k\in\mathbb{Z}}|K_{k,j,t}^{\alpha}(y^{-1}x)-K_{k,j,t}^{\alpha}(x)|dx\\
&\leq \int_{1}^{2}\int_{\rho(x)\geq 2A_{0}\rho(y)}\sum_{k\in\mathbb{Z}}\left|\frac{d}{ds}\Delta[2^{k-j}]\phi\ast R^{\alpha}\ast B_{k,s}^{\alpha}\Omega(y^{-1}x)-\frac{d}{ds}\Delta[2^{k-j}]\phi\ast R^{\alpha}\ast B_{k,s}^{\alpha}\Omega(x)\right|dxds.
\end{align*}
A direct calculation yields
\begin{align}\label{recalll}
\frac{d}{ds}R^{\alpha}\ast B_{k,s}^{\alpha}\Omega(x)
&=\frac{d}{ds}\left(\int_{s2^{k}}^{ 2^{k+1}}\int_{\Sigma}R^{\alpha}(x(r\circ\theta)^{-1})\Omega(\theta)d\sigma(\theta) \frac{dr}{r^{\alpha+1}}\right)\nonumber\\
&=-\frac{2^{-\alpha k}}{s^{\alpha+1}}\int_{\Sigma}R^{\alpha}(x((s2^{k})\circ\theta)^{-1})\Omega(\theta)d\sigma(\theta)\Big(1-\eta_{0}\Big(\frac{x}{A_{0}^{2}\kappa_{[\alpha]}2^{k+4}}\Big)\Big)\\
&-\frac{2^{-\alpha k}}{s^{\alpha+1}}\int_{\Sigma}R^{\alpha}(x((s2^{k})\circ\theta)^{-1})\Omega(\theta)d\sigma(\theta)\eta_{0}\Big(\frac{x}{A_{0}^{2}\kappa_{[\alpha]}2^{k+4}}\Big)=:I^{1}_{\alpha,k,s}(x)+I^{2}_{\alpha,k,s}(x)\nonumber.
\end{align}
Therefore, it remains to show that there exists a constant $C_{\mathbb{Q},\alpha}>0$ such that for any $j\geq 0$,
\begin{align}\label{firstpart11}
\int_{\rho(x)\geq 2A_{0}\rho(y)}\sum_{k\in\mathbb{Z}}\bigg|\Delta[2^{k-j}]\phi\ast I^{1}_{\alpha,k,s}(y^{-1}x)-\Delta[2^{k-j}]\phi\ast I^{1}_{\alpha,k,s}(x)\bigg|dx\leq C_{\mathbb{Q},\alpha}\|\Omega\|_{L^{1}(\Sigma)},
\end{align}
and that
\begin{align}\label{secondpart22}
\int_{\rho(x)\geq 2A_{0}\rho(y)}\sum_{k\in\mathbb{Z}}\bigg|\Delta[2^{k-j}]\phi\ast I^{2}_{\alpha,k,s}(y^{-1}x)-\Delta[2^{k-j}]\phi\ast I^{2}_{\alpha,k,s}(x)\bigg|dx\leq C_{\mathbb{Q},\alpha}(1+j)\|\Omega\|_{L^{1}(\Sigma)}.
\end{align}

\textbf{Estimate of \eqref{firstpart11}:}

If $\rho(x)\geq A_{0}^{2}\kappa_{[\alpha]}2^{k+4}$, then
$\rho((2^{-\ell}\circ x)z)\geq \frac{1}{A_{0}}\rho(2^{-\ell}\circ x)-\rho(z)\gtrsim 2^{-\ell}\rho(x)$
whenever $\rho(z)\leq 2^{k-\ell+2}\kappa_{[\alpha]}$. By the $[\alpha]$-order cancellation condition of $\Omega$ and Lemma \ref{citetaylor}, for any sufficient large constant $N$,
\begin{align}\label{vuiui100}
|I^{1}_{\alpha,k,s}(x)|
%&\leq\frac{2^{-\alpha k}}{s^{\alpha+1}}\sum_{\ell\in\mathbb{Z}}2^{-(\mathbb{Q}-\alpha)\ell}\left|\int_{\Sigma}\big(\zeta(2^{-\ell}\circ (x((s2^{k})\circ\theta)^{-1}))-L_{2^{-\ell}\circ x}^{\zeta}(2^{-\ell}\circ ((s2^{k})\circ\theta)^{-1})\big)\Omega(\theta)d\sigma(\theta)\right|\nonumber\\
&\lesssim\frac{2^{-\alpha k}}{s^{\alpha+1}}\sum_{\ell\in\mathbb{Z}}2^{-(\mathbb{Q}-\alpha)\ell}\int_{\Sigma}\rho\big((2^{(k-\ell)}s)\circ \theta\big)^{([\alpha]+1)}\sup\limits_{\substack{\rho(z)\leq \kappa_{[\alpha]}\rho((2^{(k-\ell)}s)\circ \theta)\\d(I)=[\alpha]+1}}|(X^{I}\zeta)\big((2^{-\ell}\circ x)z\big)||\Omega(\theta)|d\sigma(\theta)\nonumber\\
&\lesssim\frac{2^{-\alpha k}}{s^{\alpha+1}}\sum_{\ell\in\mathbb{Z}}2^{-(\mathbb{Q}-\alpha)\ell}2^{([\alpha]+1)(k-\ell)}(1+2^{-\ell}\rho(x))^{-N}\|\Omega\|_{L^{1}(\Sigma)}\lesssim \frac{2^{([\alpha]+1-\alpha)k}}{\rho(x)^{\mathbb{Q}+1+[\alpha]-\alpha}}\|\Omega\|_{L^{1}(\Sigma)},
\end{align}
where $\zeta$ is the smooth function defined in \eqref{dezeta}. Since $\supp\phi\subset\{x\in\HH:\rho(x)\leq\frac{1}{100}\}$, we have
\begin{align}\label{eq100}
|\Delta[2^{k-j}]\phi\ast I^{1}_{\alpha,k,s}(x)|
&\lesssim  \|\Omega\|_{L^{1}(\Sigma)}\int_{\rho(y)\leq 2^{k}}\frac{2^{([\alpha]+1-\alpha)k}}{d(x,y)^{\mathbb{Q}+1+[\alpha]-\alpha}}\chi_{d(x,y)>A_{0}^{2}\kappa_{[\alpha]}2^{k+4}}|\Delta[2^{k-j}]\phi(y)|dy\nonumber\\
&\lesssim \|\Omega\|_{L^{1}(\Sigma)}\frac{2^{([\alpha]+1-\alpha)k}}{\rho(x)^{\mathbb{Q}+1+[\alpha]-\alpha}}\chi_{\rho(x)\geq A_{0} \kappa_{[\alpha]} 2^{k+3}}(x).
\end{align}
This also implies that whenever $\rho(x)\geq 2A_{0}\rho(y)$,
\begin{align}\label{eq200}
|\Delta[2^{k-j}]\phi\ast I^{1}_{\alpha,k,s}(y^{-1}x)|
&\lesssim \|\Omega\|_{L^{1}(\Sigma)}\frac{2^{([\alpha]+1-\alpha)k}}{\rho(y^{-1}x)^{\mathbb{Q}+1+[\alpha]-\alpha}}\chi_{\rho(y^{-1}x)\geq A_{0} \kappa_{[\alpha]} 2^{k+3}}(y^{-1}x)\nonumber\\
&\lesssim \|\Omega\|_{L^{1}(\Sigma)}\frac{2^{([\alpha]+1-\alpha)k}}{\rho(x)^{\mathbb{Q}+1+[\alpha]-\alpha}}\chi_{\rho(x)\geq  \kappa_{[\alpha]} 2^{k+2}}(x).
\end{align}
Combining the estimates \eqref{eq100} and \eqref{eq200}, we obtain that
\begin{align}\label{eer100}
\sum_{k\in\mathbb{Z}}\bigg|\Delta[2^{k-j}]\phi\ast I^{1}_{\alpha,k,s}(y^{-1}x)-\Delta[2^{k-j}]\phi\ast I^{1}_{\alpha,k,s}(x)\bigg|&\lesssim \|\Omega\|_{L^{1}(\Sigma)}\sum_{ 2^{k+2}\kappa_{[\alpha]}\leq \rho(x)}\frac{2^{([\alpha]+1-\alpha)k}}{\rho(x)^{\mathbb{Q}+1+[\alpha]-\alpha}}\nonumber\\
&\lesssim\frac{\|\Omega\|_{L^{1}(\Sigma)}}{\rho(x)^{\mathbb{Q}}}.
\end{align}
%Therefore, similar to the proof of \eqref{eer1}, we have
%\begin{align}
%\bigg|\sum_{k\in\mathbb{Z}}\Delta[2^{k-j}]\phi\ast K^{1}_{\alpha,k,s}(y^{-1}x)-\Delta[2^{k-j}]\phi\ast K^{1}_{\alpha,k,s}(x)\bigg|&\lesssim \|\Omega\|_{1}\sum_{ 2^{k+2}\kappa_{[\alpha]}\leq \rho(x)}\frac{2^{k([\alpha]+1-\alpha)}}{\rho(x)^{\mathbb{Q}+1+[\alpha]-\alpha}}\nonumber\\
%&\lesssim\|\Omega\|_{1}\frac{1}{\rho(x)^{\mathbb{Q}}}.
%\end{align}

On the other hand, similar to the proof of \eqref{vuiui100}, for any $i=1,2,\ldots,n$,
\begin{align}\label{vuiui200}
&|X_{i} I^{1}_{\alpha,k,s}(x)|
%&\leq \frac{2^{-\alpha k}}{s^{\alpha+1}}\sum_{\ell\in\mathbb{Z}}2^{-(\mathbb{Q}-\alpha+1)\ell}\left|\int_{\Sigma}\big((X_{i}\zeta)(2^{-\ell}\circ (x((s2^{k})\circ\theta)^{-1}))-L_{2^{-\ell}\circ x}^{X_{i}\zeta}(2^{-\ell}\circ ((s2^{k})\circ\theta)^{-1})\big)\Omega(\theta)d\sigma(\theta)\right|\nonumber\\
%&\lesssim\frac{2^{-\alpha k}}{s^{\alpha+1}}\sum_{\ell\in\mathbb{Z}}2^{-(\mathbb{Q}-\alpha+1)\ell}\int_{\Sigma}\rho\big((2^{(k-\ell)}s)\circ \theta\big)^{([\alpha]+1)}\sup\limits_{\substack{\rho(z)\leq \kappa_{[\alpha]}\rho((2^{(k-\ell)}s)\circ \theta)\\d(I)=[\alpha]+1}}|(X^{I}X_{i}\zeta)\big((2^{-\ell}\circ x)z\big)||\Omega(\theta)|d\sigma(\theta)\nonumber\\
\lesssim \|\Omega\|_{L^{1}(\Sigma)}\frac{2^{([\alpha]+1-\alpha)k}}{\rho(x)^{\mathbb{Q}+2+[\alpha]-\alpha}}.
\end{align}

Hence, for any $i=1,2,\ldots,n$,
\begin{align}\label{ffrom000}
|X_{i}\Delta[2^{k-j}]\phi\ast I^{1}_{\alpha,k,s}(x)|\lesssim \|\Omega\|_{L^{1}(\Sigma)} \frac{2^{([\alpha]+1-\alpha)k}}{\rho(x)^{\mathbb{Q}+2+[\alpha]-\alpha}}\chi_{\rho(x)\geq A_{0} \kappa_{[\alpha]} 2^{k+3}}(x).
\end{align}
Therefore,
\begin{align}\label{alsk000}
\sum\limits_{k\in\mathbb{Z}}\bigg|X_{i}\Delta[2^{k-j}]\phi\ast I^{1}_{\alpha,k,s}(x)\bigg|&\lesssim  \|\Omega\|_{L^{1}(\Sigma)}\sum\limits_{k\in\mathbb{Z}}\int_{\rho(y)\leq 2^{k}}\frac{2^{([\alpha]+1-\alpha)k}}{d(x,y)^{\mathbb{Q}+2+[\alpha]-\alpha}}\chi_{d(x,y)>A_{0}\kappa_{[\alpha]}2^{k+3}}|\Delta[2^{k-j}]\phi(y)|dy\nonumber\\
&\lesssim \|\Omega\|_{L^{1}(\Sigma)}\sum\limits_{k\in\mathbb{Z}}\frac{2^{([\alpha]+1-\alpha)k}}{\rho(x)^{\mathbb{Q}+2+[\alpha]-\alpha}}\chi_{\rho(x)\geq \kappa_{[\alpha]} 2^{k+2}}(x)\lesssim \|\Omega\|_{L^{1}(\Sigma)}\rho(x)^{-\mathbb{Q}-1}.
\end{align}
This, together with the mean value theorem on homogeneous groups (see \cite{FoSt}), shows
\begin{align}\label{eer2000}
\sum\limits_{k\in\mathbb{Z}}\bigg|\Delta[2^{k-j}]\phi\ast I^{1}_{\alpha,k,s}(y^{-1}x)-\Delta[2^{k-j}]\phi\ast I^{1}_{\alpha,k,s}(x)\bigg|\lesssim \|\Omega\|_{L^{1}(\Sigma)}\frac{\rho(y)}{\rho(x)^{\mathbb{Q}+1}}.
\end{align}
Combining the estimates \eqref{eer100} and \eqref{eer2000}, we conclude that
\begin{align}\label{eeeer000}
\sum\limits_{k\in\mathbb{Z}}\bigg|\Delta[2^{k-j}]\phi\ast I^{1}_{\alpha,k,s}(y^{-1}x)-\Delta[2^{k-j}]\phi\ast I^{1}_{\alpha,k,s}(x)\bigg|\lesssim \frac{\|\Omega\|_{L^{1}(\Sigma)}}{\rho(x)^{\mathbb{Q}}}\omega\Big(\frac{\rho(y)}{\rho(x)}\Big),
\end{align}
where $\omega(t)\leq \min\{1,t\}$ uniformly in $j\geq 0$. Thus,
\begin{align}\label{ererer1000}
&\int_{\rho(x)\geq 2A_{0}\rho(y)}\sum\limits_{k\in\mathbb{Z}}\bigg|\Delta[2^{k-j}]\phi\ast I^{1}_{\alpha,k,s}(y^{-1}x)-\Delta[2^{k-j}]\phi\ast I^{1}_{\alpha,k,s}(x)\bigg|dx\nonumber\\
&\lesssim \|\Omega\|_{L^{1}(\Sigma)}\int_{\rho(x)\geq 2A_{0}\rho(y)}\frac{1}{\rho(x)^{\mathbb{Q}}}\omega\Big(\frac{\rho(y)}{\rho(x)}\Big)dx
\lesssim \|\Omega\|_{L^{1}(\Sigma)}\sum_{k=0}^{\infty}\omega(2^{-k})\lesssim \|\Omega\|_{L^{1}(\Sigma)}.
\end{align}
%\begin{align*}
%\bigg|\sum\limits_{k\in\mathbb{Z}}\nabla\Delta[2^{k-j}]\phi\ast K_{\alpha,k}^{1}(x)\bigg|&\lesssim  \|\Omega\|_{1}\sum\limits_{k\in\mathbb{Z}}\int_{\rho(y)\leq 2^{k}}\frac{2^{k([\alpha]+1-\alpha)}}{d(x,y)^{\mathbb{Q}+2+[\alpha]-\alpha}}\chi_{d(x,y)>A_{0}^{2}\kappa_{[\alpha]}2^{k+4}}|\Delta[2^{k-j}]\phi(y)|dy\nonumber\\
%&\lesssim \|\Omega\|_{1}\sum\limits_{k\in\mathbb{Z}}\frac{2^{k([\alpha]+1-\alpha)}}{\rho(x)^{\mathbb{Q}+2+[\alpha]-\alpha}}\chi_{\rho(x)\geq A_{0} \kappa_{[\alpha]} 2^{k+3}}(x)\lesssim \|\Omega\|_{1}\rho(x)^{-\mathbb{Q}-1},
%\end{align*}
%This, together with the mean value theorem on homogeneous groups, yields
%\begin{align}\label{eer2}
%\bigg|\sum\limits_{k\in\mathbb{Z}}\Delta[2^{k-j}]\phi\ast K_{\alpha,k}^{1}(y^{-1}x)-\Delta[2^{k-j}]\phi\ast K_{\alpha,k}^{1}(x)\bigg|\lesssim \|\Omega\|_{1}\frac{\rho(y)}{\rho(x)^{\mathbb{Q}+1}}.
%\end{align}

\textbf{Estimate of \eqref{secondpart22}:}

To begin with,
\begin{align}\label{rhs}
&\sum_{k\in\mathbb{Z}}\int_{\rho(x)\geq 2A_{0}\rho(y)}\bigg|\Delta[2^{k-j}]\phi\ast I^{2}_{\alpha,k,s}(y^{-1}x)-\Delta[2^{k-j}]\phi\ast I^{2}_{\alpha,k,s}(x)\bigg|dx\nonumber\\
&\leq \sum_{k\in\mathbb{Z}}\int_{\HH}\int_{\rho(x)\geq 2A_{0}\rho(y)}|\Delta[2^{k-j}]\phi(y^{-1}xz^{-1})-\Delta[2^{k-j}]\phi(xz^{-1})|dx|I^{2}_{\alpha,k,s}(z)|dz.
%&= \sum_{k\in\mathbb{Z}}\int_{\HH}\int_{\rho(x)\geq \frac{A_{0}\rho(y)}{2^{k-j-1}}}|\phi((2^{-(k-j)}\circ y^{-1}) x (2^{-(k-j)} \circ z^{-1}))-\phi(x(2^{-(k-j)}\circ z^{-1}))|dx|K^{2}_{\alpha,k}(z)|dz.
\end{align}
It can be verified directly that if $\rho(y)\geq 2^{k+7}A_{0}^{3}\kappa_{[\alpha]}$ and $\rho(x)\geq 2A_{0}\rho(y)$, then
$$\rho(2^{-(k-j)}\circ y^{-1}xz^{-1})\geq 1\ {\rm and}\ \rho(2^{-(k-j)}\circ xz^{-1})\geq 1.$$
This, in combination with a simple change of variable and Lemma \ref{appen}, yields
\begin{align}\label{1234}
&\sum_{k\in\mathbb{Z}}\int_{\rho(x)\geq 2A_{0}\rho(y)}\bigg|\Delta[2^{k-j}]\phi\ast I^{2}_{\alpha,k,s}(y^{-1}x)-\Delta[2^{k-j}]\phi\ast I^{2}_{\alpha,k,s}(x)\bigg|dx\nonumber\\
&\leq \sum_{2^{k+7}A_{0}^{3}\kappa_{[\alpha]}\geq\rho(y)}\int_{\HH}\int_{\rho(x)\geq 2A_{0}\rho(y)}|\Delta[2^{k-j}]\phi(y^{-1}xz^{-1})-\Delta[2^{k-j}]\phi(xz^{-1})|dx|I^{2}_{\alpha,k,s}(z)|dz\nonumber\\
&\leq \sum_{2^{k+7}A_{0}^{3}\kappa_{[\alpha]}\geq\rho(y)}\int_{\HH}\int_{\HH}|\phi\big((2^{-(k-j)}\circ y^{-1}) x (2^{-(k-j)} \circ z^{-1})\big)-\phi\big(x(2^{-(k-j)}\circ z^{-1})\big)|dx|I^{2}_{\alpha,k,s}(z)|dz\nonumber\\
&\lesssim \sum_{2^{k+7}A_{0}^{3}\kappa_{[\alpha]}\geq\rho(y)}\min\Big\{1,\frac{\rho(y)}{2^{k-j}}\Big\}\|I^{2}_{\alpha,k,s}\|_{1}.
\end{align}
To continue, we estimate the $L^{1}$-norm of $I^{2}_{\alpha,k,s}$. Note that
\begin{align*}
\int_{\HH}|I^{2}_{\alpha,k,s}(z)|dz
&\lesssim 2^{-\alpha k}\int_{\rho(z)\leq A_{0}^{2}\kappa_{[\alpha]}2^{k+5}}\int_{\Sigma}\frac{|\Omega(\theta)|}{d(z,(s2^{k})\circ\theta)^{\mathbb{Q}-\alpha}}d\sigma(\theta) dz\\
&= \int_{\Sigma}\int_{\rho(z)\leq A_{0}^{2}\kappa_{[\alpha]}2^{5}}\frac{|\Omega(\theta)|}{d(z,s\circ\theta)^{\mathbb{Q}-\alpha}} dzd\sigma(\theta)\lesssim\|\Omega\|_{L^{1}(\Sigma)}.
\end{align*}
This, in combination with \eqref{1234}, implies
\begin{align}\label{omit2}
&\sum_{k\in\mathbb{Z}}\int_{\rho(x)\geq 2A_{0}\rho(y)}\bigg|\Delta[2^{k-j}]\phi\ast I^{2}_{\alpha,k,s}(y^{-1}x)-\Delta[2^{k-j}]\phi\ast I^{2}_{\alpha,k,s}(x)\bigg|dx\nonumber\\
&\lesssim \|\Omega\|_{L^{1}(\Sigma)} \sum_{2^{k+7}A_{0}^{3}\kappa_{[\alpha]}\geq\rho(y)}\min\Big\{1,\frac{\rho(y)}{2^{k-j}}\Big\}\lesssim (1+j)\|\Omega\|_{L^{1}(\Sigma)}.
\end{align}
This verifies \eqref{secondpart22} and therefore, the proof of Lemma \ref{hormander222} is complete.
\end{proof}
\begin{lemma}\label{strong22}
There exist  constants $C_{\mathbb{Q},\alpha}>0$ and $\tau>0$ such that for any $j\geq 0$,
\begin{align}\label{forsss}
\Big\|\sup\limits_{t\in[1,2)}\sup\limits_{k\in\mathbb{Z}}|V_{k,j,t}^{\alpha}f|\Big\|_{2}\leq C_{\mathbb{Q},\alpha}2^{-\tau j}\|\Omega\|_{L^{1}(\Sigma)}\|f\|_{2}.
\end{align}
\end{lemma}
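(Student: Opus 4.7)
The plan is to remove the supremum in $t$ via the fundamental theorem of calculus and then invoke Lemma~\ref{ckt} through Khinchin's inequality. Since $B_{k,2}^\alpha\Omega \equiv 0$, we can write
\begin{align*}
B_{k,t}^\alpha\Omega \;=\; \int_t^2 \mu_{k,s}^\alpha\, ds, \qquad \mu_{k,s}^\alpha := -\partial_s B_{k,s}^\alpha\Omega.
\end{align*}
A direct polar-coordinate calculation using~\eqref{polar} identifies $\mu_{k,s}^\alpha$ with the surface measure $s^{-\mathbb{Q}-\alpha}2^{-k(\mathbb{Q}-1+\alpha)}\Omega\, d\sigma_{s2^k}$ on $\{\rho(x)=s2^k\}$, which is exactly the family flagged in the Remark following Lemma~\ref{ckt}. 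The $[\alpha]$-order cancellation of $\Omega$ on $\Sigma$ together with the homogeneity $P((s2^k)\circ\theta)=(s2^k)^d P(\theta)$ for a polynomial of homogeneous degree $d\leq[\alpha]$ yields the cancellation hypothesis of Proposition~\ref{keypro}, and the support and mass bounds $\|\mu_{k,s}^\alpha\|_1\lesssim 2^{-k\alpha}\|\Omega\|_{L^1(\Sigma)}$ hold uniformly for $s\in[1,2]$.

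For $j\geq 1$, the kernel of $V_{k,j,t}^\alpha$ equals $\Psi_{k-j}\ast R^\alpha\ast B_{k,t}^\alpha\Omega$, so the representation above together with the trivial bound $\sup_k|a_k|\leq(\sum_k|a_k|^2)^{1/2}$ gives
\begin{align*}
\sup_{t\in[1,2)}\sup_{k\in\mathbb{Z}}|V_{k,j,t}^\alpha f(x)| \;\leq\; \int_1^2 \Bigl(\sum_{k\in\mathbb{Z}}\bigl|f\ast\Psi_{k-j}\ast R^\alpha\ast\mu_{k,s}^\alpha(x)\bigr|^2\Bigr)^{1/2} ds.
\end{align*}
Taking the $L^2(\mathbb{H})$ norm, Minkowski's integral inequality combined with the Rademacher orthogonality identity $\sum_k\|F_k\|_2^2 = \int_0^1\|\sum_k r_k(\tau)F_k\|_2^2\,d\tau$ reduces the problem to the uniform bound
\begin{align*}
\Bigl\|\sum_{k\in\mathbb{Z}} r_k(\tau)\, f\ast\Psi_{k-j}\ast R^\alpha\ast\mu_{k,s}^\alpha\Bigr\|_2 \;\lesssim\; 2^{-\tau_0 j}\|\Omega\|_{L^1(\Sigma)}\|f\|_2, \qquad \tau\in[0,1],\ s\in[1,2],
\end{align*}
which is precisely Lemma~\ref{ckt} applied to the family $\{\mu_{k,s}^\alpha\}_k$; the uniformity in the sign choices and in $s$ is exactly the content of the Remark following that lemma. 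Integrating in $s$ then recovers~\eqref{forsss} for $j\geq 1$.

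The endpoint $j=0$ is different because it carries the approximate identity $\Delta[2^{k-1}]\phi$ in place of the mean-zero piece $\Psi_{k-j}$, so no Khinchin/Cotlar--Stein decay is available. For this case I would write $V_{k,0,t}^\alpha f = f\ast\bigl(\Delta[2^{k-1}]\phi\ast R^\alpha\ast B_{k,t}^\alpha\Omega\bigr)$ and, using the Taylor-expansion bound of Case~1.2 in the proof of Proposition~\ref{keypro} together with the trivial $L^\infty$ bound near the origin, show that the kernel is pointwise majorized by $C\|\Omega\|_{L^1(\Sigma)}$ times an $L^1$-normalized approximate identity at scale $2^k$; this yields $\sup_{t,k}|V_{k,0,t}^\alpha f|\lesssim\|\Omega\|_{L^1(\Sigma)}\,Mf$ pointwise, after which the desired $L^2$ bound follows from the Hardy--Littlewood maximal theorem. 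I expect the main technical obstacle to be the careful verification of the $[\alpha]$-cancellation and mass bound for $\mu_{k,s}^\alpha$ uniformly in $s\in[1,2]$, since the pushforward of $\Omega\,d\sigma$ from $\Sigma$ to the dilated sphere $(s2^k)\Sigma$ must be tracked through the scaling of polynomials under $\lambda\circ x$; a secondary point is the standard but nontrivial dominated-convergence step needed to interchange $\sup_t$, $\sup_k$, the $s$-integral, and the Rademacher expectation when invoking Khinchin's inequality.
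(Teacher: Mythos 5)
Your argument is correct, but it follows a genuinely different route from the paper's. The paper removes the supremum in $t$ via the square-function inequality of Jones--Seeger--Wright, bounding $\sup_{t,k}|V_{k,j,t}^{\alpha}f|$ by a sum over $k$ of geometric means of $\big(\int_1^2|V_{k,j,s}^{\alpha}f|^2ds\big)^{1/4}$ and $\big(\int_1^2|\tfrac{d}{ds}V_{k,j,s}^{\alpha}f|^2ds\big)^{1/4}$, and then applies Khinchin's inequality and Lemma \ref{ckt} \emph{twice}: once with $\mu_k^{\alpha}=B_{k,s}^{\alpha}\Omega\,dx$ and once with the surface measure $2^{-(\mathbb{Q}-1+\alpha)k}s^{-\mathbb{Q}-\alpha}\Omega\,d\sigma_{s2^k}$ arising from the $s$-derivative. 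You instead perform the fundamental theorem of calculus at the level of the kernels, $B_{k,t}^{\alpha}\Omega=\int_t^2\mu_{k,s}^{\alpha}\,ds$ (the same identity the paper itself exploits in the proof of Lemma \ref{hormander222}), majorize the double supremum by the $s$-integral of a single square function built only from the surface measures, and then use Minkowski plus one application of Lemma \ref{ckt}, whose uniformity in $s\in[1,2]$ and in the signs is exactly what the Remark after that lemma provides; your verification of the support, $[\alpha]$-cancellation and mass bound $\|\mu_{k,s}^{\alpha}\|_1\lesssim 2^{-\alpha k}\|\Omega\|_{L^1(\Sigma)}$ for this family is the right thing to check and goes through. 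This buys a shorter proof that avoids the JSW lemma and the derivative square function, at the cost of treating $j=0$ separately (the paper absorbs $j=0$ into the same machinery by writing $S_k=\sum_{i\le 0}\Psi_{k-i}$ and summing the geometric decay, as in Lemma \ref{newww}); your maximal-function treatment of $j=0$ is also fine and mirrors the paper's estimate of ${\rm I_2}$ in Section \ref{dyadicsection}. One small imprecision: for $j=0$ the "trivial $L^{\infty}$ bound near the origin" cannot be asserted for $R^{\alpha}\ast B_{k,t}^{\alpha}\Omega$ itself, which need not be bounded when $\Omega$ is merely in $L^1(\Sigma)$; the correct statement is that the mollified kernel $\Delta[2^{k-1}]\phi\ast R^{\alpha}\ast B_{k,t}^{\alpha}\Omega$ is $O\big(2^{-\mathbb{Q}k}\|\Omega\|_{L^1(\Sigma)}\big)$, obtained by pairing $\|\Delta[2^{k-1}]\phi\|_{\infty}\lesssim 2^{-\mathbb{Q}k}$ with the bound $\int_{B(x,C2^k)}|R^{\alpha}\ast B_{k,t}^{\alpha}\Omega|\lesssim\|\Omega\|_{L^1(\Sigma)}$ coming from the local integrability of the Riesz kernel (compare \eqref{fjfjfjfj}--\eqref{cmn145}); with this reading, combined with the Case 1.2/\eqref{cajk2}-type decay for $\rho(x)\gg 2^k$, your pointwise domination by $\|\Omega\|_{L^1(\Sigma)}Mf$ and hence the $j=0$ bound are justified.
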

\begin{proof}

Using fundamental theorem of calculus to the function $|V_{k,j,t}^{\alpha}f(x)|^2$, we get the following type of Gagliardo-Nirenberg inequality:
\begin{align*}
\sup\limits_{t\in[1,2)}\sup\limits_{k\in\mathbb{Z}}|V_{k,j,t}^{\alpha}f(x)|\lesssim \sum_{k\in\mathbb{Z}}\bigg(\int_{1}^{2}|V_{k,j,s}^{\alpha}f(x)|^{2}ds\bigg)^{1/4}\bigg(\int_{1}^{2}\Big|\frac{d}{ds}V_{k,j,s}^{\alpha}f(x)\Big|^{2}ds\bigg)^{1/4}.
\end{align*}
By Cauchy-Schwartz's inequality,
\begin{align*}
\Big\|\sup\limits_{t\in[1,2)}\sup\limits_{k\in\mathbb{Z}}|V_{k,j,t}^{\alpha}f|\Big\|_{2}\leq \bigg\|\bigg(\sum_{k\in\mathbb{Z}}\int_{1}^{2}|V_{k,j,s}^{\alpha}f|^{2}ds\bigg)^{1/2}\bigg\|_{2}^{1/2}
\bigg\|\bigg(\sum_{k\in\mathbb{Z}}\int_{1}^{2}\bigg|\frac{d}{ds}V_{k,j,s}^{\alpha}f\bigg|^{2}ds\bigg)^{1/2}\bigg\|_{2}^{1/2}.
\end{align*}
Thus, Lemma \ref{strong22} can be reduced to showing that
\begin{align}\label{red1}
\bigg\|\bigg(\sum_{k\in\mathbb{Z}}\int_{1}^{2}|H_{k,j,s}^{\alpha}f|^{2}ds\bigg)^{1/2}\bigg\|_{2}\lesssim 2^{-\tau j}\|\Omega\|_{L^{1}(\Sigma)}\|f\|_{2}
\end{align}
%and that
%\begin{align}\label{red2}
%\bigg\|\bigg(\sum_{k\in\mathbb{Z}}\int_{1}^{2}\bigg|\frac{d}{ds}V_{k,j,s}^{\alpha}f\bigg|^{2}ds\bigg)^{1/2}\bigg\|_{2}\lesssim 2^{-\tau j}\|\Omega\|_{L^{1}(\Sigma)}\|f\|_{2}
%\end{align}
for some $\tau>0$, where $H_{k,j,s}^{\alpha}$ stands for the operator  $V_{k,j,s}^{\alpha}$ or $\frac{d}{ds}V_{k,j,s}^{\alpha}$.

\textbf{Estimate of \eqref{red1}:}

To this end, let $r_{k}:[0,1]\rightarrow \mathbb{R}$ be a collection of independent Rademacher random variables. From the calculation of \eqref{recalll} we see that
%\begin{align*}
%\frac{d}{ds}R^{\alpha}\ast B_{k,s}^{\alpha}\Omega(x)=-\frac{2^{-k\alpha}}{s^{\alpha+1}}\int_{\Sigma}R^{\alpha}(x((s2^{k})\circ\theta)^{-1})\Omega(\theta)d\sigma(\theta).
%\end{align*}
\begin{align*}
\frac{d}{ds}V_{k,j,s}^{\alpha}f=f\ast\Psi_{k-j}\ast R^{\alpha}\ast (-2^{-(\mathbb{Q}-1+\alpha)k}s^{-\mathbb{Q}-\alpha}\Omega d\sigma_{s2^{k}}).
\end{align*}
Then by Khinchin's inequality and Lemma \ref{ckt},
\begin{align*}
\bigg\|\bigg(\sum_{k\in\mathbb{Z}}\int_{1}^{2}|H_{k,j,s}^{\alpha}f|^{2}ds\bigg)^{1/2}\bigg\|_{2}
&\lesssim\sup\limits_{s\in[1,2]}\bigg(\int_{\HH}\bigg\|\sum_{k\in\mathbb{Z}}r_{k}(t)H_{k,j,s}^{\alpha}f\bigg\|_{L^{2}([0,1])}^{2}dx\bigg)^{1/2}\\
&\lesssim\sup\limits_{s\in[1,2]}\sup\limits_{t\in[0,1]}\bigg\|\sum_{k\in\mathbb{Z}}r_{k}(t)H_{k,j,s}^{\alpha}f\bigg\|_{2}\lesssim2^{-\tau j}\|\Omega\|_{L^{1}(\Sigma)}\|f\|_{2}.
\end{align*}

%\textbf{Estimate of \eqref{red2}:}

%
%By Khinchin's inequality and Lemma \ref{ckt} with $\mu_{k}^{\alpha}$ and $r_{k,j}(t)$ chosen to be $2^{-(\mathbb{Q}-1+\alpha)k}s^{-\mathbb{Q}-\alpha}\Omega d\sigma_{s2^{k}}$ and $r_{k}(t)$, respectively,
%\begin{align*}
%\bigg\|\bigg(\sum_{k\in\mathbb{Z}}\int_{1}^{2}\bigg|\frac{d}{ds}V_{k,j,s}^{\alpha}f\bigg|^{2}ds\bigg)^{1/2}\bigg\|_{2}
%&\lesssim\sup\limits_{s\in[1,2]}\bigg(\int_{\HH}\bigg\|\sum_{k\in\mathbb{Z}}r_{k}(t)\frac{d}{ds}V_{k,j,s}^{\alpha}f\bigg\|_{L^{2}([0,1])}^{2}dx\bigg)^{1/2}\\
%&\lesssim\sup\limits_{s\in[1,2]}\sup\limits_{t\in[0,1]}\bigg\|\sum_{k\in\mathbb{Z}}r_{k}(t)\frac{d}{ds}V_{k,j,s}^{\alpha}f\bigg\|_{2}\lesssim2^{-\tau j}\|\Omega\|_{L^{1}(\Sigma)}\|f\|_{2}.
%\end{align*}
This ends the proof of \eqref{red1} and then Lemma \ref{strong22}.
%Then by Cotlar-Knapp-Stein Lemma (see \cite{s93}), it suffices to show that:
%\begin{align}
%\bigg\|\Big(\frac{d}{ds}V_{k,j,s}^{\alpha}M_{r_{k}(t)}\Big)^{*}\frac{d}{ds}V_{k^{\prime},j,s}^{\alpha}M_{r_{k^{\prime}}(t)}\bigg\|_{2\rightarrow 2}+\bigg\|\frac{d}{ds}V_{k^{\prime},j,s}^{\alpha}M_{r_{k^{\prime}}(t)}\Big(\frac{d}{ds}V_{k,j,s}^{\alpha}M_{r_{k}(t)}\Big)^{*}\bigg\|_{2\rightarrow 2}\lesssim 2^{-c|k-k^{\prime}|}\|\Omega\|_{1}.
%\end{align}
%We only estimate the first term, since the second term is similar. A direct calculation yields
%\begin{align*}
%&\bigg\|\Big(\frac{d}{ds}V_{k,j,s}^{\alpha}M_{r_{k}(t)}\Big)^{*}\frac{d}{ds}V_{k^{\prime},j,s}^{\alpha}M_{r_{k^{\prime}}(t)}f\bigg\|_{2}\\
%&=\frac{2^{-(k+k^{\prime})\alpha}}{s^{2\alpha+2}}\bigg\|f\ast\Psi_{k^{\prime}-j}\ast\bigg(\int_{\Sigma}R^{\alpha}(x((s2^{k^{\prime}})\circ\theta)^{-1})\Omega(\theta)d\sigma(\theta)\bigg)\ast \bigg(\int_{\Sigma}R^{\alpha}(x((s2^{k})\circ\theta)^{-1})\tilde{\Omega}(\theta)d\sigma(\theta)\bigg)\ast\Psi_{k-j}\bigg\|_{2}\\
%&\lesssim 2^{-(k+k^{\prime})\alpha}\sum_{i\in\mathbb{Z}}\sum_{i^{\prime}\in\mathbb{Z}}\bigg\|f\ast\Psi_{k^{\prime}-j}\ast\bigg(\int_{\Sigma}R^{\alpha}(x((s2^{k^{\prime}})\circ\theta)^{-1})\Omega(\theta)d\sigma(\theta)\bigg)\ast\\
%&\hspace{7.4cm}\ast\Psi_{i}\ast\Psi_{i^{\prime}}\ast\bigg(\int_{\Sigma}R^{\alpha}(x((s2^{k})\circ\theta)^{-1})\tilde{\Omega}(\theta)d\sigma(\theta)\bigg)\ast\Psi_{k-j}\bigg\|_{2}.
%\end{align*}
\end{proof}

\begin{lemma}\label{weak22}
For any $1<p<\infty$, there exist  constants $C_{\mathbb{Q},\alpha}$, $C_{\mathbb{Q},\alpha,p}>0$ and $\tau>0$ such that for any $j\geq 0$,
\begin{align}\label{weaktype2222}
\Big\|\sup\limits_{t\in[1,2)}\sup\limits_{k\in\mathbb{Z}}|V_{k,j,t}^{\alpha}f|\Big\|_{L^{1,\infty}}\leq C_{\mathbb{Q},\alpha}(1+j)\|\Omega\|_{L^{1}(\Sigma)}\|f\|_{1},
\end{align}
and
\begin{align}\label{strongpppp}
\Big\|\sup\limits_{t\in[1,2)}\sup\limits_{k\in\mathbb{Z}}|V_{k,j,t}^{\alpha}f|\Big\|_{L^{p}}\leq C_{\mathbb{Q},\alpha,p}2^{-\tau j}\|\Omega\|_{L^{1}(\Sigma)}\|f\|_{p}.
\end{align}
\end{lemma}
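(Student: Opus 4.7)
My plan mirrors the scheme for Lemma \ref{weak11}, adapted to the sublinear maximal form $\sup_{t,k}|V_{k,j,t}^{\alpha}|$. The two key inputs are already in place: Lemma \ref{hormander222} furnishes a uniform H\"ormander estimate on $\sup_{k,t}|K_{k,j,t}^{\alpha}(y^{-1}x)-K_{k,j,t}^{\alpha}(x)|$ with constant $(1+j)\|\Omega\|_{L^{1}(\Sigma)}$, and Lemma \ref{strong22} supplies the strong $(2,2)$ bound with the desired decay $2^{-\tau j}\|\Omega\|_{L^{1}(\Sigma)}$.

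For the weak-type bound \eqref{weaktype2222} I would perform a standard Calder\'on--Zygmund decomposition of $f$ at height $\lambda$, writing $f = g + b$ with $b = \sum_{Q}b_{Q}$ a sum of zero-mean atoms supported on disjoint Christ-type dyadic quasi-balls on $\HH$, satisfying $\|b_{Q}\|_{1}\lesssim \lambda|Q|$ and $\sum_{Q}|Q|\lesssim \lambda^{-1}\|f\|_{1}$. Chebyshev combined with Lemma \ref{strong22} handles the good part, since
\[
\lambda^{-2}\Big\|\sup_{k,t}|V_{k,j,t}^{\alpha}g|\Big\|_{2}^{2} \lesssim \lambda^{-2}\cdot 2^{-2\tau j}\|\Omega\|_{L^{1}(\Sigma)}^{2}\|g\|_{2}^{2} \lesssim 2^{-2\tau j}\|\Omega\|_{L^{1}(\Sigma)}^{2}\lambda^{-1}\|f\|_{1}.
\]
For the bad part, the cancellation of each $b_{Q}$ lets me bring the supremum inside the kernel difference,
\[
\sup_{k,t}|V_{k,j,t}^{\alpha}b_{Q}(x)| \leq \int\sup_{k,t}|K_{k,j,t}^{\alpha}(y^{-1}x)-K_{k,j,t}^{\alpha}(y_{Q}^{-1}x)||b_{Q}(y)|dy
\]
on the complement of a dilate $Q^{*}$ of $Q$, and Lemma \ref{hormander222} integrated against $\sum_{Q}\|b_{Q}\|_{1}\lesssim \|f\|_{1}$ then delivers the $(1+j)\|\Omega\|_{L^{1}(\Sigma)}$ weak-type constant.

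Marcinkiewicz interpolation (for sublinear operators) between this weak $(1,1)$ bound and the strong $(2,2)$ bound of Lemma \ref{strong22} immediately yields \eqref{strongpppp} for $1 < p \leq 2$: the polynomial factor $(1+j)^{2/p-1}$ is absorbed by the exponential factor $2^{-\tau j(2-2/p)}$, producing $\lesssim 2^{-\tau'j}\|\Omega\|_{L^{1}(\Sigma)}$ for some $\tau'(p)>0$. For $p > 2$ I would linearize the supremum by selecting measurable $k(x)\in\mathbb{Z}$ and $t(x)\in[1,2)$ and forming the linear operator $Uf(x) := V_{k(x),j,t(x)}^{\alpha}f(x)$; since $\|\sup_{k,t}|V_{k,j,t}^{\alpha}f|\|_{p}=\sup_{k(\cdot),t(\cdot)}\|Uf\|_{p}$, duality reduces the matter to a uniform $L^{p'}$ bound for $U^{*}$ with $p' < 2$, which is obtained by repeating the Step 1--2 argument for the adjoint kernel. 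The needed input is a right-translation companion to Lemma \ref{hormander222}, namely
\[
\int_{\rho(x)\geq 2A_{0}\rho(y)}\sup_{k,t}|K_{k,j,t}^{\alpha}(xy)-K_{k,j,t}^{\alpha}(x)|dx \lesssim (1+j)\|\Omega\|_{L^{1}(\Sigma)},
\]
proved by the same scheme as Lemma \ref{hormander222} but invoking the right-invariant Taylor expansion from the second half of Lemma \ref{taylor} together with a left-invariant (i.e., $X_j$-based) analogue of Lemma \ref{appen}.

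The main obstacle I anticipate is ensuring that the Calder\'on--Zygmund decomposition of Step 1 truly absorbs the $\ell^{\infty}$-valued supremum: both the H\"ormander input from Lemma \ref{hormander222} and the $L^{2}$ input from Lemma \ref{strong22} must control the sup rather than each individual $V_{k,j,t}^{\alpha}$. A secondary, genuinely independent obstacle is the right-translation H\"ormander bound needed for the $p>2$ range: although technically parallel to Lemma \ref{hormander222}, on the non-abelian group $\HH$ left and right translations do not interchange, and so the companion estimate must be derived by a separate (but entirely analogous) argument.
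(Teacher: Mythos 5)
Your proof is correct and follows essentially the same route as the paper, whose own argument is a one-line appeal to Lemmata \ref{hormander222} and \ref{strong22} combined with a ``standard Calder\'on--Zygmund decomposition, interpolation and vector-valued duality argument'' (citing \cite[Theorem 5.17]{JDbook}) --- exactly the scheme you spell out, and the right-translation H\"ormander companion you flag for the range $p>2$ is precisely the ingredient that duality step silently requires on a non-abelian group (compare the two-sided smoothness \eqref{smo} and \eqref{smo2} proved in Lemma \ref{caal}). One small caution: when dualizing, work with the $\ell^{1}$-valued adjoint (equivalently, split $g$ over the level sets of the measurable selection $(k(x),t(x))$) so that the mean-zero property of the bad part can be exploited componentwise, since the selection-dependent kernel $K_{k(x),j,t(x)}^{\alpha}(y^{-1}x)$ cannot be compared at $x$ and at the center $x_{Q}$ directly because the indices differ.
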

\begin{proof}
By Lemmas \ref{hormander222} and \ref{strong22}, these inequalities can be obtained by a standard argument of Calder\'{o}n-Zygmund decomposition (see for example \cite{JDbook,s93}), together with the interpolation theorem and a standard vector-valued duality argument (see for example \cite[Theorem 5.17]{JDbook}).
\end{proof}

{\it Proof of Proposition \ref{nondyadicprop}}.

Note that $M_{\Omega,\alpha}(-\Delta_{\HH})^{-\alpha/2}f=\sup\limits_{t\in[1,2)}\sup\limits_{k\in\mathbb{Z}}|f\ast R^{\alpha}\ast B_{k,t}^{\alpha}\Omega|$. This, in combination with the equality \eqref{vkj} and inequality \eqref{strongpppp}, yields
\begin{align*}
\|M_{\Omega,\alpha}(-\Delta_{\HH})^{-\alpha/2}f\|_{p}\leq \sum\limits_{j=0}^{\infty}\Big\|\sup\limits_{t\in[1,2)}\sup\limits_{k\in\mathbb{Z}}|V_{k,j,t}^{\alpha}f|\Big\|_{p}\lesssim \|\Omega\|_{L^{1}(\Sigma)}\|f\|_{p}.
\end{align*}
This ends the proof of Proposition \ref{nondyadicprop}.
\hfill $\square$
\subsection{$L^{p}$ estimate of dyadic maximal function}\label{dyadicsection}
In this subsection, we give the proof of the Proposition \ref{dyadicprop}. To this end, let $\phi$ be a smooth cut-off function defined in Section \ref{preliminariessec}. Then
\begin{align*}
T_{\Omega,\alpha}^{k}f= T_{\Omega,\alpha}f\ast\Delta[2^{k}]\phi-\sum_{s=-\infty}^{0}f\ast B_{k+s}^{\alpha}\Omega\ast\Delta[2^{k}]\phi+\sum_{s=1}^{\infty}f\ast B_{k+s}^{\alpha}\Omega\ast(\delta_{0}-\Delta[2^{k}]\phi).
\end{align*}
From this equality we see that
\begin{align}\label{IIIIII}
&\Big\|\sup\limits_{k\in\mathbb{Z}}|T_{\Omega,\alpha}^{k}(-\Delta_{\HH})^{-\alpha/2}f|\Big\|_{p}\nonumber\\
&\leq \Big\|\sup\limits_{k\in\mathbb{Z}}|T_{\Omega,\alpha}(-\Delta_{\HH})^{-\alpha/2}f\ast\Delta[2^{k}]\phi|\Big\|_{p}
+\Big\|\sup\limits_{k\in\mathbb{Z}}\Big|\sum_{s=-\infty}^{0}f\ast R^{\alpha}\ast B_{k+s}^{\alpha}\Omega\ast\Delta[2^{k}]\phi\Big|\Big\|_{p}\nonumber\\
&+\Big\|\sup\limits_{k\in\mathbb{Z}}\Big|\sum_{s=1}^{\infty}f\ast R^{\alpha}\ast B_{k+s}^{\alpha}\Omega\ast(\delta_{0}-\Delta[2^{k}]\phi)\Big|\Big\|_{p}=:{\rm I_{1}+I_{2}+I_{3}}.
\end{align}

\textbf{Estimate of ${\rm I_{1}}$.} By Proposition \ref{prop1} and the $L^{p}$ boundedness of the Hardy-Littlewood maximal function, we see that for any $1<p<\infty$,
\begin{align*}
\Big\|\sup\limits_{k\in\mathbb{Z}}|T_{\Omega,\alpha}(-\Delta_{\HH})^{-\alpha/2}f\ast\Delta[2^{k}]\phi|\Big\|_{p}
\lesssim\|MT_{\Omega,\alpha}(-\Delta_{\HH})^{-\alpha/2}f\|_{p}\lesssim\|T_{\Omega,\alpha}(-\Delta_{\HH})^{-\alpha/2}f\|_{p}\lesssim\|\Omega\|_{L^{1}(\Sigma)}\|f\|_{p}.
\end{align*}

\textbf{Estimate of ${\rm I_{2}}$.} Observe that
\begin{align*}
\sum_{s=-\infty}^{0}B_{k+s}^{\alpha}\Omega=\frac{\Omega(x)}{\rho(x)^{\mathbb{Q}+\alpha}}\chi_{\rho(x)\leq 2^{k+1}}.
\end{align*}

Then using the cancellation condition of $\Omega$ and Lemma \ref{citetaylor} to get that
\begin{align}\label{fjfjfjfj}
\bigg|\sum_{s=-\infty}^{0}B_{k+s}^{\alpha}\Omega\ast\Delta[2^{k}]\phi (x)\bigg|
%&\leq 2^{-\mathbb{Q}k}\int_{\rho(y)\leq 2^{k+2}}\Big|\sum_{s=-\infty}^{0}B_{k+s}^{\alpha}\Omega(y)\Big|\Big|\phi\big(2^{-k}\circ (y^{-1}x)\big)-R_{2^{-k}\circ x}^{\phi}(2^{-k}\circ y^{-1})\Big|dy\chi_{\rho(x)\leq A_{0}\kappa_{[\alpha]}2^{k+2}}\nonumber\\
%&\lesssim 2^{-\mathbb{Q}k}\int_{\rho(y)\leq 2^{k+2}}\frac{|\Omega(y)|}{\rho(y)^{\mathbb{Q}+\alpha}}\rho(2^{-k}\circ y)^{[\alpha]+1}\sup\limits_{\substack{\rho(z)\leq \kappa_{[\alpha]}\rho(2^{-k}\circ y)\\d(I)=[\alpha]+1}}|Y^{I}\phi\big(z (2^{-k}\circ x)\big)|dy\chi_{\rho(x)\leq A_{0}\kappa_{[\alpha]}2^{k+2}}\nonumber\\
&\lesssim 2^{-(\mathbb{Q}+[\alpha]+1)k}\int_{\rho(y)\leq 2^{k+2}}\frac{|\Omega(y)|}{\rho(y)^{\mathbb{Q}+\alpha-[\alpha]-1}}dy\chi_{\rho(x)\leq A_{0}\kappa_{[\alpha]}2^{k+2}}\nonumber\\
&\lesssim \frac{\|\Omega\|_{L^{1}(\Sigma)}}{2^{(\mathbb{Q}+\alpha)k}}\chi_{\rho(x)\leq A_{0}\kappa_{[\alpha]}2^{k+2}},
\end{align}
where  the last inequality follows by using polar coordinates \eqref{polar}.

\textbf{Case 1:} If $\rho(x)\leq A_{0}^{2}\kappa_{[\alpha]}2^{k+4}$, then
\begin{align}\label{cmn145}
\bigg|R^{\alpha}\ast \sum_{s=-\infty}^{0}B_{k+s}^{\alpha}\Omega\ast\Delta[2^{k}]\phi (x)\bigg|&=\left|\int_{\HH}R^{\alpha}(xy^{-1}) \sum_{s=-\infty}^{0}B_{k+s}^{\alpha}\Omega\ast\Delta[2^{k}]\phi(y)dy\right|\nonumber\\
&\lesssim\int_{d(x,y)\leq A_{0}^{3}\kappa_{[\alpha]}2^{k+5}}\frac{\|\Omega\|_{L^{1}(\Sigma)}}{2^{(\mathbb{Q}+\alpha)k}}\frac{1}{d(x,y)^{\mathbb{Q}-\alpha}}dy\lesssim 2^{-\mathbb{Q}k}\|\Omega\|_{L^{1}(\Sigma)}.
\end{align}

\textbf{Case 2:} If $\rho(x)\geq A_{0}^{2}\kappa_{[\alpha]}2^{k+4}$, then
$\rho((2^{-\ell}\circ x)z)\geq \frac{1}{A_{0}}\rho(2^{-\ell}\circ x)-\rho(z)\gtrsim 2^{-\ell}\rho(x)$
whenever $\rho(z)\leq 2^{k-\ell+2}\kappa_{[\alpha]}$. Similar to the proof of \eqref{vuiui100}, we use the $[\alpha]$-order cancellation condition of $\Omega$ and Lemma \ref{citetaylor} to get that for any sufficient large constant $N$,
\begin{align}\label{cmn245}
\bigg|R^{\alpha}\ast \sum_{s=-\infty}^{0}B_{k+s}^{\alpha}\Omega\ast\Delta[2^{k}]\phi (x)\bigg|
%&\lesssim \sum_{\ell\in\mathbb{Z}}2^{-(\mathbb{Q}+[\alpha]+1-\alpha)\ell}\int_{\HH}\sup\limits_{\substack{\rho(z)\leq \kappa_{[\alpha]}\rho(2^{-\ell}\circ y)\\d(I)=[\alpha]+1}}|(X^{I}\zeta)\big((2^{-\ell}\circ x)z\big)|\rho( y)^{[\alpha]+1}\Big|\sum_{s=-\infty}^{0}B_{k+s}^{\alpha}\Omega\ast\Delta[2^{k}]\phi(y)\Big|dy\nonumber\\
&\lesssim \sum_{\ell\in\mathbb{Z}}2^{-(\mathbb{Q}-\alpha)\ell}2^{([\alpha]+1)(k-\ell)}(1+2^{-\ell}\rho(x))^{-N}\Big\|\sum_{s=-\infty}^{0}B_{k+s}^{\alpha}\Omega\ast\Delta[2^{k}]\phi\Big\|_{1}\nonumber\\
%&\lesssim \sum_{2^{\ell}\leq \rho(x)}2^{-(\mathbb{Q}-\alpha)\ell}2^{([\alpha]+1)(k-\ell)}(2^{-\ell}\rho(x))^{-N}2^{-\alpha k}\|\Omega\|_{L^{1}(\Sigma)}\nonumber\\
%&+ \sum_{2^{\ell}\geq \rho(x)}2^{-(\mathbb{Q}-\alpha)\ell}2^{([\alpha]+1)(k-\ell)}2^{-\alpha k}\|\Omega\|_{L^{1}(\Sigma)}
&\lesssim
\|\Omega\|_{L^{1}(\Sigma)}\frac{2^{([\alpha]+1-\alpha)k}}{\rho(x)^{\mathbb{Q}+1+[\alpha]-\alpha}}.
\end{align}
Combining the estimates \eqref{cmn145} and \eqref{cmn245} together, we conclude that
\begin{align}\label{estimateI222}
\sup\limits_{k\in\mathbb{Z}}\bigg|\sum_{s=-\infty}^{0}f\ast R^{\alpha}\ast B_{k+s}^{\alpha}\Omega\ast\Delta[2^{k}]\phi(x)\bigg|\lesssim \|\Omega\|_{L^{1}(\Sigma)}Mf(x),
\end{align}
where $Mf$ is the Hardy-Littlewood maximal function.

This, together with the $L^{p}$ boundedness of the Hardy-Littlewood maximal function, yields
\begin{align*}
{\rm I_{2}}\lesssim \|\Omega\|_{L^{1}(\Sigma)}\|f\|_{p}.
\end{align*}

\textbf{Estimate of ${\rm I_{3}}$.}

%We first claim that there exists a constant $\tau>0$, such that for any $s\geq0$,
%\begin{align}
%\|\sup\limits_{k\in\mathbb{Z}}|f\ast R^{\alpha}\ast B_{k+s}^{\alpha}\Omega\ast (\delta_{0}-\Delta[2^{k}]\phi)|\|_{2}\lesssim 2^{-\tau s}\|\Omega\|_{1}\|f\|_{2}.
%\end{align}
To begin with, we write
\begin{align*}
\sum_{s=1}^{\infty}f\ast R^{\alpha}\ast B_{k+s}^{\alpha}\Omega\ast (\delta_{0}-\Delta[2^{k}]\phi)=:\sum_{s=1}^{\infty}G_{k,s}^{\alpha}f.
\end{align*}
Noting that $S_{j}f\rightarrow f$ as $j\rightarrow -\infty$, we have the following identity:
\begin{align}\label{gksj}
G_{k,s}^{\alpha}=G_{k,s}^{\alpha}S_{k+s}+\sum_{j=1}^{\infty}G_{k,s}^{\alpha}(S_{k+s-j}-S_{k+s-(j-1)})=:\sum_{j=0}^{\infty}G_{k,s,j}^{\alpha},
\end{align}
where $G_{k,s,0}^{\alpha}:=G_{k,s}^{\alpha}S_{k+s}$ and $G_{k,s,j}^{\alpha}:=G_{k,s}^{\alpha}(S_{k+s-j}-S_{k+s-(j-1)})$ for $j\geq 1$. Then
\begin{align*}
{\rm I_{3}}=\bigg\|\sup\limits_{k\in\mathbb{Z}}\bigg|\sum_{s=1}^{\infty}f\ast R^{\alpha}\ast B_{k+s}^{\alpha}\Omega\ast (\delta_{0}-\Delta[2^{k}]\phi)\bigg|\bigg\|_{p}\leq\sum_{j=0}^{\infty}\sum_{s=1}^{\infty}\Big\|\sup\limits_{k\in\mathbb{Z}}|G_{k,s,j}^{\alpha}f|\Big\|_{p}.
\end{align*}
To continue, we establish the following inequalities: for any $j\geq 0$,
\begin{align}\label{ggl2}
\Big\|\sup\limits_{k\in\mathbb{Z}}|G_{k,s,j}^{\alpha}f|\Big\|_{2}\lesssim 2^{-\tau(j+s)}\|\Omega\|_{L^{1}(\Sigma)}\|f\|_{2}
\end{align}
for some $\tau>0$,
and
\begin{align}\label{gglp}
\Big\|\sup\limits_{k\in\mathbb{Z}}|G_{k,s,j}^{\alpha}f|\Big\|_{L^{1,\infty}}\lesssim (1+j+s)\|\Omega\|_{L^{1}(\Sigma)}\|f\|_{1}.
\end{align}
Applying the Marcinkiewicz interpolation theorem between \eqref{ggl2} and \eqref{gglp} and a standard vector-valued duality argument (see for example \cite[Theorem 5.17]{JDbook}), we see that for any $1<p<\infty$,
\begin{align*}
\Big\|\sup\limits_{k\in\mathbb{Z}}|G_{k,s,j}^{\alpha}f|\Big\|_{p}\lesssim 2^{-\tau(j+s)}\|\Omega\|_{L^{1}(\Sigma)}\|f\|_{p}.
\end{align*}
Therefore,
\begin{align*}
{\rm I_{3}}\leq\sum_{j=0}^{\infty}\sum_{s=1}^{\infty}2^{-\tau(j+s)}\|\Omega\|_{L^{1}(\Sigma)}\|f\|_{p}\lesssim\|\Omega\|_{L^{1}(\Sigma)}\|f\|_{p}.
\end{align*}

We now return to verify the inequalities \eqref{ggl2} and \eqref{gglp}.

For the estimate \eqref{ggl2}, it follows from Khinchin's inequality that for any $s\geq1$ and $i\in\mathbb{Z}$,
\begin{align*}
&\bigg\|\sup\limits_{k\in\mathbb{Z}}|f\ast\Psi_{k+s-i}\ast R^{\alpha}\ast B_{k+s}^{\alpha}\Omega\ast (\delta_{0}-\Delta[2^{k}]\phi)|\bigg\|_{2}\\
&\leq \bigg\|\bigg(\sum_{k\in\mathbb{Z}}|f\ast\Psi_{k+s-i}\ast R^{\alpha}\ast B_{k+s}^{\alpha}\Omega\ast (\delta_{0}-\Delta[2^{k}]\phi)|^{2}\bigg)^{1/2}\bigg\|_{2}\\
&\lesssim \bigg\|\Big\|\sum_{k\in\mathbb{Z}}r_{k}(t)f\ast\Psi_{k+s-i}\ast R^{\alpha}\ast B_{k+s}^{\alpha}\Omega\ast (\delta_{0}-\Delta[2^{k}]\phi)\Big\|_{L^{2}([0,1])}\bigg\|_{2}\\
&\lesssim \sum_{\ell=0}^{\infty}\sup\limits_{t\in[0,1]}\bigg\|\sum_{k\in\mathbb{Z}}r_{k}(t)f\ast\Psi_{k+s-i}\ast R^{\alpha}\ast B_{k+s}^{\alpha}\Omega\ast \Psi_{k-\ell}\bigg\|_{2}=:\sum_{\ell=0}^{\infty}\sup\limits_{t\in[0,1]}\bigg\|\sum_{k\in\mathbb{Z}}G_{k,s,i,\ell}^{\alpha}(t)f\bigg\|_{2}.
\end{align*}
By Cotlar-Knapp-Stein Lemma (see \cite{s93}), it suffices to show that:
\begin{align}\label{hhhhou}
&\|(G_{k,s,i,\ell}^{\alpha}(t))^{*}G_{k^{\prime},s,i,\ell}^{\alpha}(t)\|_{2\rightarrow 2}+\|G_{k^{\prime},s,i,\ell}^{\alpha}(t)(G_{k,s,i,\ell}^{\alpha}(t))^{*}\|_{2\rightarrow 2}\lesssim 2^{-2\tau(\ell+s+|i|)}2^{-\tau|k-k^{\prime}|}\|\Omega\|_{L^{1}(\Sigma)}^{2}.
\end{align}
We only estimate the first term, since the second one can be estimated similarly.

On the one hand, by Proposition \ref{keypro} with $\mu_{j}^{\alpha}$ chosen to be $B_{j}^{\alpha}\Omega$, Young's inequality and the estimate \eqref{cacan},
\begin{align}\label{zsdfxc}
\|(G_{k,s,i,\ell}^{\alpha}(t))^{*}G_{k^{\prime},s,i,\ell}^{\alpha}(t)f\|_{2}
&=\|(f\ast\Psi_{k^{\prime}+s-i}\ast R^{\alpha}\ast B_{k^{\prime}+s}^{\alpha}\Omega)\ast (\Psi_{k^{\prime}-\ell}\ast\Psi_{k-\ell})\ast (B_{k+s}^{\alpha}\tilde{\Omega}\ast R^{\alpha}\ast \Psi_{k+s-i}) \|_{2}\nonumber\\
&\lesssim 2^{-2\tau|i|}2^{-|k-k^{\prime}|}\|\Omega\|_{L^{1}(\Sigma)}^{2}\|f\|_{2}.
\end{align}

On the other hand, we also have
\begin{align}\label{zxsdfacv}
\|(G_{k,s,i,\ell}^{\alpha}(t))^{*}G_{k^{\prime},s,i,\ell}^{\alpha}(t)f\|_{2}
&=\|(f\ast\Psi_{k^{\prime}+s-i})\ast (R^{\alpha}\ast B_{k^{\prime}+s}^{\alpha}\Omega\ast \Psi_{k^{\prime}-\ell})\ast(\Psi_{k-\ell}\ast B_{k+s}^{\alpha}\tilde{\Omega}\ast R^{\alpha})\ast \Psi_{k+s-i}\|_{2}\nonumber\\
&\lesssim 2^{-2\tau(s+\ell)}\|\Omega\|_{L^{1}(\Sigma)}^{2}\|f\|_{2}.
\end{align}

Taking geometric means of \eqref{zsdfxc} and \eqref{zxsdfacv}, we obtain \eqref{hhhhou} and therefore,
\begin{align}\label{makey222}
\bigg\|\sup\limits_{k\in\mathbb{Z}}|f\ast\Psi_{k+s-i}\ast R^{\alpha}\ast B_{k+s}^{\alpha}\Omega\ast (\delta_{0}-\Delta[2^{k}]\phi)|\bigg\|_{2}
&\lesssim\sum_{\ell=0}^{\infty}\sup\limits_{t\in[0,1]}\Big\|\sum_{k\in\mathbb{Z}}G_{k,s,i,\ell}^{\alpha}(t)f\Big\|_{2}\nonumber\\
&\lesssim\sum_{\ell=0}^{\infty} 2^{-\tau(\ell+s+|i|)}\|\Omega\|_{L^{1}(\Sigma)}\|f\|_{2}\nonumber\\
&\lesssim2^{-\tau(s+|i|)}\|\Omega\|_{L^{1}(\Sigma)}\|f\|_{2}.
\end{align}
By replacing $i$ with $j$, it is direct that \eqref{makey222} implies  \eqref{ggl2} for $j\geq 1$. While for $j=0$, the left-hand side of  \eqref{ggl2} is dominated by the summation of the left-hand side \eqref{makey222} over $i$ from $-\infty$ to $0$, which gives the required estimate.

Next we verify \eqref{gglp}, which, by a standard argument of Calder\'{o}n-Zygmund decomposition (see for example \cite{JDbook,s93}), can be reduced to showing the following uniform H\"{o}rmander condition: there exists a constant $C_{\mathbb{Q},\alpha}>0$ such that for any $j\geq 0$ and $s\geq1$,
\begin{align}\label{frone}
\int_{\rho(x)\geq 2A_{0}\rho(y)}\sup\limits_{k\in\mathbb{Z}}|K_{k+s,j,1}^{\alpha}(y^{-1}x)-K_{k+s,j,1}^{\alpha}(x)|dx\leq C_{\mathbb{Q},\alpha}(1+j)\|\Omega\|_{L^{1}(\Sigma)},
\end{align}
and
\begin{align}\label{seone}
\int_{\rho(x)\geq 2A_{0}\rho(y)}\sup\limits_{k\in\mathbb{Z}}|K_{k+s,j,1}^{\alpha}\ast\Delta[2^{k}]\phi(y^{-1}x)-K_{k+s,j,1}^{\alpha}\ast\Delta[2^{k}]\phi(x)|dx\leq C_{\mathbb{Q},\alpha}(1+j)\|\Omega\|_{L^{1}(\Sigma)},
\end{align}
where $K_{k+s,j,1}^{\alpha}$ is defined in \eqref{denotee}. Note that the estimate \eqref{frone} is a direct consequence of Lemma \ref{hormander222}. Next, we borrow the argument in the proof of Lemma \ref{hormander} to show \eqref{seone}. To this end, we first decompose $R^{\alpha}\ast B_{k+s}^{\alpha}\Omega\ast \Delta[2^{k}]\phi$ as follows.
\begin{align*}
& R^{\alpha}\ast B_{k+s}^{\alpha}\Omega\ast\Delta[2^{k}]\phi\\
&=R^{\alpha}\ast B_{k+s}^{\alpha}\Omega\ast\Delta[2^{k}]\phi\Big(1-\eta_{0}\Big(\frac{x}{A_{0}^{3}\kappa_{[\alpha]}2^{k+s+5}}\Big)\Big)+ R^{\alpha}\ast B_{k+s}^{\alpha}\Omega\ast\Delta[2^{k}]\phi\eta_{0}\Big(\frac{x}{A_{0}^{3}\kappa_{[\alpha]}2^{k+s+5}}\Big)\\
&=:J^{1}_{\alpha,k,s}+J^{2}_{\alpha,k,s}.
\end{align*}

Note that if $\rho(x)\geq A_{0}^{3}\kappa_{[\alpha]}2^{k+s+5}$ and $\rho(y)\leq 2^{k}$, then $\rho(xy^{-1})\geq A_{0}^{2}\kappa_{[\alpha]}2^{k+s+4}$.
Hence, following the proof of inequalities \eqref{ererer1000} and \eqref{omit2}, we deduce that
%\begin{align*}
%|J^{1}_{\alpha,k,s}(x)|&\leq\int_{\HH}|R^{\alpha}\ast B_{k+s}^{\alpha}\Omega(xy^{-1})\Delta[2^{k}]\phi(y)|dy\\
%&\lesssim\int_{d(x,y)\geq A_{0}^{2}\kappa_{[\alpha]}2^{k+s+4}}\frac{2^{([\alpha]+1-\alpha)(k+s)}}{d(x,y)^{\mathbb{Q}+1+[\alpha]-\alpha}}|\Delta[2^{k}]\phi(y)|dy\|\Omega\|_{L^{1}(\Sigma)}\lesssim\frac{2^{([\alpha]+1-\alpha)(k+s)}}{\rho(x)^{\mathbb{Q}+1+[\alpha]-\alpha}}\|\Omega\|_{L^{1}(\Sigma)},
%\end{align*}
%and that for any $i=1,2,\ldots,n$,
%\begin{align*}
%|X_{i}J^{1}_{\alpha,k,s}(x)|\lesssim\frac{2^{([\alpha]+1-\alpha)(k+s)}}{\rho(x)^{\mathbb{Q}+2+[\alpha]-\alpha}}\|\Omega\|_{L^{1}(\Sigma)}.
%\end{align*}
%Therefore,
there exists a constant $C_{\mathbb{Q},\alpha}>0$ such that for any $j\geq 0$ and $s\geq1$,
\begin{align}\label{firstpart33}
\int_{\rho(x)\geq 2A_{0}\rho(y)}\sum_{k\in\mathbb{Z}}\bigg|\Delta[2^{k+s-j}]\phi\ast J^{1}_{\alpha,k,s}(y^{-1}x)-\Delta[2^{k+s-j}]\phi\ast J^{1}_{\alpha,k,s}(x)\bigg|dx\leq C_{\mathbb{Q},\alpha}\|\Omega\|_{L^{1}(\Sigma)},
\end{align}
and that
\begin{align}\label{secondpart4444}
\int_{\rho(x)\geq 2A_{0}\rho(y)}\sum_{k\in\mathbb{Z}}\bigg|\Delta[2^{k+s-j}]\phi\ast J^{2}_{\alpha,k,s}(y^{-1}x)-\Delta[2^{k+s-j}]\phi\ast J^{2}_{\alpha,k,s}(x)\bigg|dx\leq C_{\mathbb{Q},\alpha}(1+j)\|\Omega\|_{L^{1}(\Sigma)}.
\end{align}
Combining the inequalities \eqref{firstpart33} and \eqref{secondpart4444}, we obtain \eqref{seone}.

Finally, by combining the estimates of ${\rm I_{1}}$, ${\rm I_{2}}$ and ${\rm I_{3}}$, the proof of Theorem \ref{main1} is complete.
\hfill $\square$
\bigskip
\section{weighted $L^{p}$ estimate}\label{weightsection}
\setcounter{equation}{0}

Throughout this section, unless we mention the contrary, we suppose that $\Omega$ satisfies the following assumption.

\textbf{Assumption: } Let $0\leq\alpha<\mathbb{Q}$. Suppose that $\Omega\in L^{q}(\Sigma)$  for some $q>\mathbb{Q}/\alpha$, and satisfies the cancellation condition of order $[\alpha]$.
\subsection{Kernel truncation and frequency localization}\label{kf}
To begin with, we borrow the ideas in \cite{HRT} to modify the decomposition in Section \ref{sssss2}. Since $S_{j}f\rightarrow f$ as $j\rightarrow -\infty$, for any sequence of integers $\{N(j)\}_{j=0}^{\infty}$ with $0=N(0)<N(1)<\cdots<N(j)\rightarrow +\infty$, we have
\begin{align*}
T_{k}^{\alpha}(-\Delta_{\HH})^{-\alpha/2}=T_{k}^{\alpha}(-\Delta_{\HH})^{-\alpha/2}S_{k}+\sum_{j=1}^{\infty}T_{k}^{\alpha}(-\Delta_{\HH})^{-\alpha/2}(S_{k-N(j)}-S_{k-N(j-1)}).
\end{align*}
In this way, $T_{\Omega,\alpha}(-\Delta_{\HH})^{-\alpha/2}=\sum_{j=0}^{\infty}\tilde{T}_{j}^{\alpha,N}$, where $\tilde{T}_{0}^{\alpha,N}:=\sum_{k\in\mathbb{Z}}T_{k}^{\alpha}(-\Delta_{\HH})^{-\alpha/2}S_{k}$,
%\begin{align*}
%\tilde{T}_{0}^{\alpha}:=\tilde{T}_{0}^{\alpha,N}:=\sum_{k\in\mathbb{Z}}T_{k}^{\alpha}(-\Delta_{\HH})^{-\alpha/2}S_{k}
%\end{align*}
and for $j\geq 1$,
%\begin{align*}
%\tilde{T}_{j}^{\alpha}:=\sum_{k\in\mathbb{Z}}T_{k}^{\alpha}(-\Delta_{\HH})^{-\alpha/2}(S_{k-j}-S_{k-(j-1)}),
%\end{align*}
\begin{align}\label{b222}
\tilde{T}_{j}^{\alpha,N}:=\sum_{k\in\mathbb{Z}}T_{k}^{\alpha}(-\Delta_{\HH})^{-\alpha/2}(S_{k-N(j)}-S_{k-N(j-1)}).
\end{align}
\subsection{Calder\'{o}n--Zygmund theory of $\tilde{T}_{j}^{\alpha,N}$}
Let $K_{j}^{\alpha,N}$ be the kernel of $\tilde{T}_{j}^{\alpha,N}$, then we have the following lemma.
\begin{lemma}\label{cal}
The operator $\tilde{T}_{j}^{\alpha,N}$ is a Calder\'{o}n--Zygmund operator satisfying: there exists a constant $C_{\mathbb{Q},\alpha,q}>0$ such that for any $j\geq 0$,
\begin{align}\label{sizec}
|K_{j}^{\alpha,N}(x)|\leq C_{\mathbb{Q},\alpha,q}\frac{\|\Omega\|_{L^{q}(\Sigma)}}{\rho(x)^{\mathbb{Q}}},
\end{align}
and if $\rho(x)\geq 2A_{0}\rho(y)$,
\begin{align}\label{jkl}
|K_{j}^{\alpha,N}(y^{-1}x)-K_{j}^{\alpha,N}(x)|\leq C_{\mathbb{Q},\alpha,q} \frac{\omega_{j}(\rho(y)/\rho(x))}{\rho(x)^{\mathbb{Q}}},
\end{align}
where $
\omega_{j}(t)\leq \|\Omega\|_{L^{q}(\Sigma)}\min\{1,2^{N(j)}t\}$ and $\|\omega_{j}\|_{{\rm Dini}}\lesssim (1+N(j))\|\Omega\|_{L^q(\Sigma)}$.
\end{lemma}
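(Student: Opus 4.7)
The plan is to verify directly the two Calder\'on--Zygmund conditions on the kernel $K_j^{\alpha,N} = \sum_{i=N(j-1)+1}^{N(j)} K_i^{\alpha}$ by summing over $i$ the pointwise estimates for the individual $K_i^\alpha$, reusing the machinery already developed in Lemma \ref{hormander}. Following that proof, I split $R^\alpha \ast A_k^\alpha K_\alpha^0$ into a far piece $K^1_{\alpha,k}$ (supported away from the annulus $\rho(x)\sim 2^k$) and a near piece $K^2_{\alpha,k}$.

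For the size bound \eqref{sizec}, the far piece was shown in \eqref{cajk2} to satisfy a pointwise bound which, after convolution with $\Delta[2^{k-i}]\phi$ and summation over $k$ with $2^k \lesssim \rho(x)$, yields a contribution of order $\|\Omega\|_{L^1(\Sigma)}/\rho(x)^{\mathbb{Q}}$, uniformly in the range of $i$. The new ingredient is a \emph{pointwise} bound on the near piece, since Lemma \ref{hormander} only produced an $L^1$ estimate. Applying H\"older's inequality on the sphere with exponent $q > \mathbb{Q}/\alpha$,
\begin{align*}
|K^2_{\alpha,k}(x)|
&\lesssim 2^{-(\mathbb{Q}+\alpha)k}\int_{\rho(y)\sim 2^k}\frac{|\Omega(\rho(y)^{-1}\circ y)|}{d(x,y)^{\mathbb{Q}-\alpha}}\,dy \\
&\lesssim 2^{-(\mathbb{Q}+\alpha)k}\|\Omega\|_{L^q(\Sigma)}\left(\int_{d(x,y)\lesssim 2^k}d(x,y)^{-(\mathbb{Q}-\alpha)q'}\,dy\right)^{1/q'}
\lesssim 2^{-\mathbb{Q}k}\|\Omega\|_{L^q(\Sigma)},
\end{align*}
where the last step uses $(\mathbb{Q}-\alpha)q' < \mathbb{Q}$. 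Convolving with the $L^1$-normalized $\Delta[2^{k-i}]\phi$ preserves this bound, and at any fixed $x$ only the single scale $2^k \sim \rho(x)$ contributes to the total sum.

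For the smoothness estimate \eqref{jkl}, I would reproduce the derivation of \eqref{eeeer} for the far piece to obtain a modulus of continuity $\|\Omega\|_{L^1(\Sigma)}\min\{1,t\}/\rho(x)^{\mathbb{Q}}$, uniformly in $i$. For the near piece, applying Lemma \ref{appen} to $\phi$ at the dilated scale $2^{k-i}$ and combining with the pointwise $L^q$-bound above produces a modulus of continuity behaving like $\|\Omega\|_{L^q(\Sigma)}\min\{1, 2^i \rho(y)/2^k\}$. Summing over $i \in \{N(j-1)+1, \ldots, N(j)\}$ and the single contributing scale $2^k \sim \rho(x)$, the finest scale $2^{k-N(j)}$ dominates, giving $\omega_j(t) \leq \|\Omega\|_{L^q(\Sigma)}\min\{1, 2^{N(j)}t\}$. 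The Dini estimate $\|\omega_j\|_{{\rm Dini}} \lesssim (1+N(j))\|\Omega\|_{L^q(\Sigma)}$ then follows from the elementary computation $\int_0^1 \min\{1, 2^{N(j)}t\}\frac{dt}{t} \lesssim 1+N(j)$.

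The main obstacle is the pointwise size bound on the near piece $K^2_{\alpha,k}$: the argument in Lemma \ref{hormander} only delivered an integrated estimate, which is insufficient for the Calder\'on--Zygmund size condition. Closing this gap is precisely where the strengthened hypothesis $\Omega \in L^q(\Sigma)$ with $q > \mathbb{Q}/\alpha$ becomes indispensable, as it guarantees the local integrability of $d(x,y)^{-(\mathbb{Q}-\alpha)q'}$ and hence the boundedness of the Riesz potential applied to $A_k^\alpha K_\alpha^0$ near the annulus of singularities.
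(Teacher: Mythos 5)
Your proposal is correct and rests on the same two key estimates as the paper's own proof: the pointwise H\"{o}lder bound near the singular annulus, which is exactly the paper's \eqref{caaa1} (and is precisely where $q>\mathbb{Q}/\alpha$, i.e. $(\mathbb{Q}-\alpha)q'<\mathbb{Q}$, enters; note your middle display drops the factor $2^{\mathbb{Q}k/q}$ coming from normalizing $\|\Omega\|_{L^{q}(\Sigma)}$ over the annulus, though the final bound $2^{-\mathbb{Q}k}\|\Omega\|_{L^{q}(\Sigma)}$ is right), and the Taylor/cancellation decay \eqref{caaa2}, taken from \eqref{cajk2}, away from it. The organization differs: the paper does not sum block-by-block over $i\in\{N(j-1)+1,\dots,N(j)\}$, but works directly with the mollifier at scale $2^{k-N(j)}$, proves the gradient bound \eqref{gra}, $|X_{i}K_{j}^{\alpha,N}(x)|\lesssim 2^{N(j)}\|\Omega\|_{L^{q}(\Sigma)}\rho(x)^{-\mathbb{Q}-1}$, by letting the derivative fall on $\Delta[2^{k-N(j)}]\phi$ when $\rho(x)\lesssim 2^{k}$ and on $R^{\alpha}\ast A_{k}^{\alpha}K_{\alpha}^{0}$ (via \eqref{sisisi}) when $\rho(x)\gtrsim 2^{k}$, and then gets \eqref{jkl} from the pointwise mean value theorem on $\HH$ combined with the size bound \eqref{kkkkkl}; your per-block route reaches the same conclusion since $\sum_{i\leq N(j)}2^{i}t\lesssim 2^{N(j)}t$. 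Two points need care in your version: first, Lemma \ref{appen} is an integrated ($L^{1}$) mean value inequality and only yields a H\"{o}rmander-type bound as in Lemma \ref{hormander}; for the pointwise smoothness condition \eqref{jkl} you must use the pointwise mean value theorem on homogeneous groups, i.e. $|\Delta[2^{k-i}]\phi(y^{-1}u)-\Delta[2^{k-i}]\phi(u)|\lesssim\rho(y)\,2^{-(\mathbb{Q}+1)(k-i)}$, as the paper does. Second, when you sum the per-block moduli over $i$ (and the far-piece contribution, which is merely uniform in $i$), the saturated branches add up to roughly $N(j)-N(j-1)$, so the cap $\omega_{j}(t)\leq\|\Omega\|_{L^{q}(\Sigma)}$ must come from the size bound \eqref{sizec} together with $N(j)\lesssim 2^{N(j)}$, not from the sum itself; similarly, at fixed $x$ all scales $2^{k}\gtrsim\rho(x)$ (not only $2^{k}\sim\rho(x)$) contribute to the near piece, the sum being geometric. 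Finally, since the lemma asserts that $\tilde{T}_{j}^{\alpha,N}$ is a Calder\'{o}n--Zygmund operator, you should also record its $L^{2}$ boundedness, which follows from Lemma \ref{ckt} as the paper notes.
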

\begin{proof}
We only give the proof of the case $0<\alpha<\mathbb{Q}$ and $j\geq 1$ since the case $\alpha=0$ and the case $j=0$ are much more simpler. The $L^{2}$ boundedness of $\tilde{T}_{j}^{\alpha,N}$ follows from Lemma \ref{ckt} and the equality \eqref{equa} directly. Next we estimate the expression $R^{\alpha}\ast A_{k}^{\alpha}K_{\alpha}^{0}$.

\textbf{Case 1:} If $\rho(x)\leq A_{0}^{3}\kappa_{[\alpha]}2^{k+5}$,
then we first note that
\begin{align}\label{indd}
|A_{k}^{\alpha}K_\alpha^0(x)|
&=2^{-k}\left|\int_{-\infty}^{+\infty}\varphi(2^{-k}t)t^{-\mathbb{Q}-\alpha}K_{\alpha}(t^{-1}\circ x)\chi_{1\leq \rho(t^{-1}\circ x)\leq 2}(x)dt\right|\nonumber\\
&=\left|\int_{-\infty}^{+\infty}t\varphi(t)(2^{k}t)^{-\mathbb{Q}-\alpha}K_{\alpha}\big((2^{k}t)^{-1}\circ x\big)\chi_{2^{k}t\leq\rho(x)\leq 2^{k+1}t}(x)dt\right|\nonumber\\
&\lesssim \rho(x)^{-\mathbb{Q}-\alpha}|\Omega(x)|\chi_{2^{k-1}\leq\rho(x)\leq 2^{k+2}}(x).
\end{align}
%which means that
%\begin{align*}
%|A_{k}^{\alpha}K_{\alpha}^{0}(x)|
%\lesssim\rho(x)^{-\mathbb{Q}-\alpha}|\Omega(x)|\chi_{2^{k-1}\leq\rho(x)\leq 2^{k+2}}(x).
%\end{align*}
%\begin{align*}
%|A_{k}^{\alpha}K_{\alpha}^{0}(x)|
%&=2^{-k}\left|\int_{-\infty}^{+\infty}\varphi(2^{-k}t)t^{-\mathbb{Q}-\alpha}K_{\alpha}(t^{-1}\circ x)\chi_{1\leq \rho(t^{-1}\circ x)\leq 2}(x)dt\right|\\
%&=\left|\int_{-\infty}^{+\infty}t\varphi(t)(2^{k}t)^{-\mathbb{Q}-\alpha}K_{\alpha}((2^{k}t)^{-1}\circ x)\chi_{2^{k}t\leq\rho(x)\leq 2^{k+1}t}(x)dt\right|\\
%&\leq C\rho(x)^{-\mathbb{Q}-\alpha}|\Omega(x)|\chi_{2^{k-1}\leq\rho(x)\leq 2^{k+2}}(x).
%\end{align*}
This, in combination with \eqref{Rie}, yields that for $q>\mathbb{Q}/\alpha$,
\begin{align}\label{caaa1}
|R^{\alpha}\ast A_{k}^{\alpha}K_{\alpha}^{0}(x)|
&\lesssim\int_{2^{k-1}\leq \rho(z)\leq 2^{k+2}}\frac{|\Omega(z)|}{\rho(z)^{\mathbb{Q}+\alpha}}\frac{1}{d(x,z)^{\mathbb{Q}-\alpha}}dz\nonumber\\
&\lesssim\left(\int_{2^{k-1}\leq \rho(z)\leq 2^{k+2}}\frac{|\Omega(z)|^{q}}{\rho(z)^{(\mathbb{Q}+\alpha)q}}dz\right)^{1/q}\left(\int_{d(x,z)\lesssim 2^{k}}\frac{1}{d(x,z)^{(\mathbb{Q}-\alpha)q^{\prime}}}dz\right)^{1/q^{\prime}}\lesssim 2^{-\mathbb{Q}k}\|\Omega\|_{L^{q}(\Sigma)}.
\end{align}

\textbf{Case 2:} If $\rho(x)\geq A_{0}^{3}\kappa_{[\alpha]}2^{k+5}$, then  using the decomposition \eqref{rieszde} and Lemma \ref{citetaylor}, we get
%$\rho((2^{-\ell}\circ x)z)\geq \frac{1}{A_{0}}\rho(2^{-\ell}\circ x)-\rho(z)\gtrsim 2^{-\ell}\rho(x)$
%whenever $\rho(z)\leq 2^{k-\ell+2}\kappa_{[\alpha]}\rho(y)$. Next, recall from the proof of Proposition \ref{keypro} that
%$$
%R^{\alpha}(x)=\sum_{\ell\in\mathbb{Z}}R_{\ell}^{\alpha}(x)=\sum_{\ell\in\mathbb{Z}}2^{-(\mathbb{Q}-\alpha)\ell}\zeta(2^{-\ell}\circ x),
%$$
%where $\zeta(x)$ is a smooth function supported in the region $\{\frac{1}{2}\leq \rho(x)\leq 2\}$, and satisfying
%\begin{align*}
%|\zeta(xy^{-1})-L_{x}^{\zeta}(y^{-1})|\lesssim \rho(y)^{[\alpha]+1}\sup\limits_{\rho(z)\leq \kappa_{[\alpha]}\rho(y),d(I)=[\alpha]+1}|X^{I}\zeta(xz)|,
%\end{align*}
%where $L_{x}^{\zeta}$ is the left Taylor polynomial of $\zeta$ at $x$ of homogeneous degree $[\alpha]$. It follows from $[\alpha]$-order cancellation property of $\Omega$  that for any sufficient large $N$,
\begin{align}\label{caaa2}
|R^{\alpha}\ast A_{k}^{\alpha}K_{\alpha}^{0}(x)|
\lesssim \frac{2^{([\alpha]+1-\alpha)k}}{\rho(x)^{\mathbb{Q}+1+[\alpha]-\alpha}}\|\Omega\|_{L^{1}(\Sigma)}.
\end{align}
Next, we combine the estimates \eqref{caaa1} and \eqref{caaa2} to estimate $\Delta[2^{k-N(j)}]\phi\ast R^{\alpha}\ast A_{k}^{\alpha}K_{\alpha}^{0}$.
Since $\supp\phi\subset\{x\in\HH:\rho(x)\leq\frac{1}{100}\}$, we have
\begin{align}\label{hjhj}
|\Delta[2^{k-N(j)}]\phi\ast R^{\alpha}\ast A_{k}^{\alpha}K_{\alpha}^{0}(x)|
&\lesssim \|\Omega\|_{L^{q}(\Sigma)}\int_{\rho(y)\leq 2^{k}}2^{-\mathbb{Q}k}\chi_{d(x,y)\leq A_{0}^{3}\kappa_{[\alpha]} 2^{k+5}}|\Delta[2^{k-N(j)}]\phi(y)|dy\nonumber\\
&\quad+ \|\Omega\|_{L^{1}(\Sigma)}\int_{\rho(y)\leq 2^{k}}\frac{2^{([\alpha]+1-\alpha)k}}{d(x,y)^{\mathbb{Q}+1+[\alpha]-\alpha}}\chi_{d(x,y)>A_{0}^{3}\kappa_{[\alpha]}2^{k+5}}|\Delta[2^{k-N(j)}]\phi(y)|dy\nonumber\\
&\lesssim \|\Omega\|_{L^{q}(\Sigma)}2^{-\mathbb{Q}k}\chi_{\rho(x)\leq A_{0}^{4}\kappa_{[\alpha]} 2^{k+6}}(x)+\|\Omega\|_{L^{1}(\Sigma)}\frac{2^{([\alpha]+1-\alpha)k}}{\rho(x)^{\mathbb{Q}+1+[\alpha]-\alpha}}\chi_{\rho(x)\geq A_{0}^{2} \kappa_{[\alpha]} 2^{k+4}}(x).
\end{align}
This, together with triangle's inequality, implies
\begin{align}\label{kkkkkl}
|K_{j}^{\alpha,N}(x)|
&\lesssim \sum_{k\in\mathbb{Z}}\left(\|\Omega\|_{L^{q}(\Sigma)}2^{-\mathbb{Q}k}\chi_{\rho(x)\leq A_{0}^{4}\kappa_{[\alpha]} 2^{k+6}}(x)+\|\Omega\|_{L^{1}(\Sigma)}\frac{2^{([\alpha]+1-\alpha)k}}{\rho(x)^{\mathbb{Q}+1+[\alpha]-\alpha}}\chi_{\rho(x)\geq A_{0}^{2} \kappa_{[\alpha]}2^{k+4}}(x)\right)\nonumber\\
&\lesssim \frac{\|\Omega\|_{L^{q}(\Sigma)}}{\rho(x)^{\mathbb{Q}}}.
\end{align}

To estimate \eqref{jkl}, we first estimate $X_{i} \Delta[2^{k-N(j)}]\phi\ast R^{\alpha}\ast A_{k}^{\alpha}K_{\alpha}^{0}(x)$ for any $i=1,2,\cdots,n$. We also consider it into two cases.

\textbf{Case 1:} If $\rho(x)\leq A_{0}^{3}\kappa_{[\alpha]}2^{k+5}$, by the estimates \eqref{caaa1} and \eqref{caaa2} and the fact $\supp X_{i}\phi\subset\{x\in\HH:\rho(x)\leq \frac{1}{100}\}$, we see that
\begin{align}\label{baaa1}
&|X_{i}\Delta[2^{k-N(j)}]\phi\ast R^{\alpha}\ast A_{k}^{\alpha}K_{\alpha}^{0}(x)|\nonumber\\
&\lesssim\|\Omega\|_{L^{q}(\Sigma)}\int_{d(x,y)<2^{k-N(j)}}\bigg(2^{-\mathbb{Q}k}\chi_{\rho(y)\leq A_{0}^{3}\kappa_{[\alpha]} 2^{k+5}}+\frac{2^{([\alpha]+1-\alpha)k}}{\rho(y)^{\mathbb{Q}+1+[\alpha]-\alpha}}\chi_{\rho(y)\geq A_{0}^{3}\kappa_{[\alpha]}2^{k+5}}\bigg)2^{-(\mathbb{Q}+1)(k-N(j))}dy\nonumber\\
&\lesssim\|\Omega\|_{L^{q}(\Sigma)}2^{N(j)}2^{-(\mathbb{Q}+1)k}.
\end{align}

\textbf{Case 2:} If $\rho(x)\geq A_{0}^{3}\kappa_{[\alpha]}2^{k+5}$, we claim that
\begin{align*}
|X_{i} \Delta[2^{k-N(j)}]\phi\ast R^{\alpha}\ast A_{k}^{\alpha}K_{\alpha}^{0}(x)|
\lesssim \frac{2^{([\alpha]+1-\alpha)k}}{\rho(x)^{\mathbb{Q}+2+[\alpha]-\alpha}}\|\Omega\|_{L^{1}(\Sigma)}.
\end{align*}
% we first note that if $\rho(x)\geq A_{0}\kappa_{[\alpha]}2^{k+3}$ and $\rho(z)\leq 2^{k-\ell+2}\kappa_{[\alpha]}\rho(y)$, then
%$$\rho((2^{-\ell}\circ x)z)\geq \frac{1}{A_{0}}\rho(2^{-\ell}\circ x)-\rho(z)\gtrsim 2^{-\ell}\rho(x),$$
%and, similar to the proof of \eqref{caaa2}, for any $i=1,2,\ldots,n$,
%\begin{align*}
%&|X_{i} R^{\alpha}\ast A_{k}^{\alpha}K_{\alpha}^{0}(x)|\\
%&\lesssim \sum_{\ell\in\mathbb{Z}}2^{-(\mathbb{Q}-\alpha+1)\ell}\left|\int_{\HH}[(X_{i}\zeta)(2^{-\ell}\circ (xy^{-1}))-L_{2^{-\ell}\circ x}^{X_{i}\zeta}(2^{-\ell}\circ y^{-1})]A_{k}^{\alpha}K_{\alpha}^{0}(y)dy\right|\nonumber\\
%&\lesssim \sum_{\ell\in\mathbb{Z}}2^{-(\mathbb{Q}-\alpha+1)\ell}\int_{\HH}\rho(2^{-\ell}\circ y)^{[\alpha]+1}\sup\limits_{\rho(z)\leq \kappa_{[\alpha]}\rho(2^{-\ell}\circ y),d(I)=[\alpha]+1}|(X^{I}X_{i}\zeta)((2^{-\ell}\circ x)z)||A_{k}^{\alpha}K_{\alpha}^{0}(y)|dy\nonumber\\
%&\lesssim \sum_{2^{\ell}\leq \rho(x)}2^{-(\mathbb{Q}-\alpha+1)\ell}2^{(k-\ell)([\alpha]+1)}(2^{-\ell}\rho(x))^{-N}2^{-k\alpha}\|\Omega\|_{1}\nonumber\\
%&+ \sum_{2^{\ell}\leq \rho(x)}2^{-(\mathbb{Q}-\alpha+1)\ell}2^{(k-\ell)([\alpha]+1)}2^{-k\alpha}\|\Omega\|_{1}\lesssim \frac{2^{k([\alpha]+1-\alpha)}}{\rho(x)^{\mathbb{Q}+2+[\alpha]-\alpha}}\|\Omega\|_{1}.
%\end{align*}
To show this inequality, we use the decomposition \eqref{rieszde} and Lemma \ref{citetaylor} to see that if $\rho(x)\geq A_{0}^{2}\kappa_{[\alpha]}2^{k+4}$, then
\begin{align}\label{caaa3}
|X_{i} R^{\alpha}\ast A_{k}^{\alpha}K_{\alpha}^{0}(x)|\lesssim \frac{2^{([\alpha]+1-\alpha)k}}{\rho(x)^{\mathbb{Q}+2+[\alpha]-\alpha}}\|\Omega\|_{L^{1}(\Sigma)}.
\end{align}
Next, we combine the estimates \eqref{baaa1} and \eqref{caaa3} to estimate $X_{i}\Delta[2^{k-N(j)}]\phi\ast R^{\alpha}\ast A_{k}^{\alpha}K_{\alpha}^{0}$. Since $\supp\phi\subset\{x\in\HH:\rho(x)\leq\frac{1}{100}\}$ and $\rho(x)\geq A_{0}^{3}\kappa_{[\alpha]}2^{k+5}$, we have
\begin{align*}
|X_{i} \Delta[2^{k-N(j)}]\phi\ast R^{\alpha}\ast A_{k}^{\alpha}K_{\alpha}^{0}(x)|
&=\int_{d(x,z)\geq A_{0}^{2}\kappa_{[\alpha]} 2^{k+4}}|\Delta[2^{k-N(j)}]\phi(z)||X_{i} R^{\alpha}\ast A_{k}^{\alpha}K_{\alpha}^{0}(z^{-1}x)|dz\\
&\lesssim \|\Omega\|_{L^{1}(\Sigma)}\int_{d(x,z)\geq A_{0}^{2}\kappa_{[\alpha]} 2^{k+4}}|\Delta[2^{k-N(j)}]\phi(z)|\frac{2^{([\alpha]+1-\alpha)k}}{d(x,z)^{\mathbb{Q}+2+[\alpha]-\alpha}}dz\nonumber\\
&\lesssim \frac{2^{([\alpha]+1-\alpha)k}}{\rho(x)^{\mathbb{Q}+2+[\alpha]-\alpha}}\|\Omega\|_{L^{1}(\Sigma)}.
\end{align*}
This, together with triangle's inequality and the fact that $N(j-1)\leq N(j)$, implies that for any $i=1,2,\cdots,n$,
\begin{align}\label{gra}
|X_{i} K_{j}^{\alpha,N}(x)|&\lesssim \sum_{k\in\mathbb{Z}}\|\Omega\|_{L^{q}(\Sigma)}\frac{2^{N(j)}}{2^{(\mathbb{Q}+1)k}}\chi_{\rho(x)\leq A_{0}^{3}\kappa_{[\alpha]} 2^{k+5}}+\sum_{k\in\mathbb{Z}}\|\Omega\|_{L^{q}(\Sigma)}\frac{2^{([\alpha]+1-\alpha)k}}{\rho(x)^{\mathbb{Q}+2+[\alpha]-\alpha}}\chi_{\rho(x)>A_{0}^{3}\kappa_{[\alpha]}2^{k+5}}\nonumber\\
&\lesssim 2^{N(j)}\frac{\|\Omega\|_{L^{q}(\Sigma)}}{\rho(x)^{\mathbb{Q}+1}}.
\end{align}
This, in combination with the mean value theorem on homogeneous groups (see for example \cite{FoSt}), implies that if $\rho(x)\geq 2A_{0}\rho(y)$, then
\begin{align*}
|K_{j}^{\alpha,N}(y^{-1}x)-K_{j}^{\alpha,N}(x)|\lesssim 2^{N(j)}\frac{\|\Omega\|_{L^{q}(\Sigma)}}{\rho(x)^{\mathbb{Q}+1}}\rho(y).
\end{align*}
This, combined with \eqref{kkkkkl}, yields
\begin{align*}
|K_{j}^{\alpha,N}(y^{-1}x)-K_{j}^{\alpha,N}(x)|\lesssim \frac{\omega_{j}(\rho(y)/\rho(x))}{\rho(x)^{\mathbb{Q}}},
\end{align*}
where $\omega_{j}(t)\leq \|\Omega\|_{L^{q}(\Sigma)}\min\{1,2^{N(j)}t\}$. Then a direct calculation shows that
\begin{align*}
\int_{0}^{1}\omega_{j}(t)\frac{dt}{t}\lesssim(1+N(j))\|\Omega\|_{q}.
\end{align*}
This ends the proof of Lemma \ref{cal}.
\end{proof}

\subsection{Quantitative weighted bounds for $ \sup\limits_{k\in\mathbb{Z}}|T_{\Omega,\alpha}f\ast\Delta[2^{k}]\phi |$}\label{seee}
Let $\phi$ be a cut-off function defined in Section \ref{preliminariessec}. Then the following $(L_{\alpha}^{p}(w), L^{p}(w))$ boundedness of discrete maximal function holds.
\begin{proposition}\label{discre}
For any $1<p<\infty$ and $w\in A_{p}$, there exists a constant $C_{\mathbb{Q},\alpha,p,q}>0$ such that
\begin{align*}
\Big\|\sup\limits_{k\in\mathbb{Z}}|T_{\Omega,\alpha}f\ast\Delta[2^{k}]\phi |\Big\|_{L^{p}(w)}\leq C_{\mathbb{Q},\alpha,p,q}\|\Omega\|_{L^{q}(\Sigma)}\{w\}_{A_p}(w)_{A_p}\|f\|_{L_{\alpha}^{p}(w)}.
\end{align*}
\end{proposition}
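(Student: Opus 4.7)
\textbf{Proof plan for Proposition \ref{discre}.} The strategy is to dominate the $k$-supremum by the Hardy--Littlewood maximal function, reducing the statement to a quantitative weighted bound for $T_{\Omega,\alpha}(-\Delta_{\HH})^{-\alpha/2}$ itself, which is then obtained by decomposing this operator into the Dini--Calder\'{o}n--Zygmund pieces $\tilde{T}_j^{\alpha,N}$ supplied by Section \ref{kf} and interpolating between their $L^p(w)$ and $L^2$ bounds.

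First, since $\phi\in C_c^{\infty}(\HH)$ is a nonnegative bump supported near the origin, a standard argument gives the pointwise majorization $\sup_{k\in\ZZ}|F\ast\Delta[2^k]\phi(x)|\lesssim MF(x)$ for every locally integrable $F$. By Buckley's quantitative weighted inequality for $M$ on spaces of homogeneous type (which is well known to give $\|M\|_{L^p(w)\to L^p(w)}\lesssim[w]_{A_p}^{1/(p-1)}\leq\{w\}_{A_p}(w)_{A_p}$), one obtains
\begin{equation*}
\Big\|\sup_{k\in\ZZ}|T_{\Omega,\alpha}f\ast\Delta[2^k]\phi|\Big\|_{L^p(w)}\lesssim \{w\}_{A_p}(w)_{A_p}\,\|T_{\Omega,\alpha}f\|_{L^p(w)}.
\end{equation*}
Setting $g=(-\Delta_{\HH})^{\alpha/2}f$, it remains to prove the quantitative weighted bound
\begin{equation*}
\|T_{\Omega,\alpha}(-\Delta_{\HH})^{-\alpha/2}g\|_{L^p(w)}\lesssim \|\Omega\|_{L^q(\Sigma)}\,\{w\}_{A_p}(w)_{A_p}\,\|g\|_{L^p(w)}.
\end{equation*}

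To establish this bound, I would use the decomposition $T_{\Omega,\alpha}(-\Delta_{\HH})^{-\alpha/2}=\sum_{j\geq 0}\tilde{T}_j^{\alpha,N}$ from Section \ref{kf} for a sequence $\{N(j)\}$ to be chosen later. By Lemma \ref{cal}, each $\tilde{T}_j^{\alpha,N}$ is an $\omega_j$-Calder\'{o}n--Zygmund operator with size constant $\lesssim\|\Omega\|_{L^q(\Sigma)}$ and Dini norm $\lesssim(1+N(j))\|\Omega\|_{L^q(\Sigma)}$. The quantitative maximal pointwise sparse domination for Dini--CZ operators on $\HH$ (provided by the appendix, in the spirit of \cite{DHL,Lerner4,HRT}) then delivers
\begin{equation*}
\|\tilde{T}_j^{\alpha,N}\|_{L^p(w)\to L^p(w)}\lesssim (1+N(j))\,\|\Omega\|_{L^q(\Sigma)}\,\{w\}_{A_p}(w)_{A_p},
\end{equation*}
while reapplying the Cotlar--Knapp--Stein/Khinchin scheme of Lemma \ref{ckt} to the telescoping form \eqref{b222} gives the unweighted decay $\|\tilde{T}_j^{\alpha,N}\|_{2\to 2}\lesssim 2^{-\tau N(j-1)}\|\Omega\|_{L^q(\Sigma)}$ for $j\geq 1$.

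Interpolating the two estimates via Stein--Weiss interpolation with change of measure, as in the scheme pictured in Figure 2, produces for some $\theta\in(0,1)$ depending on the $A_\infty$ characteristics of $w$ and $w^{1-p'}$ the bound
\begin{equation*}
\|\tilde{T}_j^{\alpha,N}\|_{L^p(w)\to L^p(w)}\lesssim (1+N(j))^{1-\theta}\,2^{-\tau\theta N(j-1)}\,\|\Omega\|_{L^q(\Sigma)}\{w\}_{A_p}(w)_{A_p}.
\end{equation*}
Choosing a geometric sequence such as $N(j)=2^j$, the prefactor $(1+N(j))^{1-\theta}2^{-\tau\theta N(j-1)}$ is summable in $j$, and adding the pieces yields the desired weighted bound. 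The main obstacle I anticipate is the interpolation step: because the $L^p(w)$ bound already carries the large factor $\{w\}_{A_p}(w)_{A_p}$, ordinary Riesz--Thorin would not preserve the quantitative form, so one must use the change-of-measure interpolation of \cite{HRT} with the interpolation parameter $\theta$ finely calibrated to $(w)_{A_p}$ via the reverse H\"{o}lder property of $A_\infty$ weights, and then verify that the resulting exponent of $\{w\}_{A_p}(w)_{A_p}$ stays exactly $1$ after summation in $j$.
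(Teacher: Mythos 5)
There is a genuine quantitative gap at the very first step of your plan. Dominating $\sup_{k}|T_{\Omega,\alpha}f\ast\Delta[2^{k}]\phi|$ by $M(T_{\Omega,\alpha}f)$ and then invoking a weighted bound for $T_{\Omega,\alpha}(-\Delta_{\HH})^{-\alpha/2}$ forces you to pay the \emph{product} of two weighted operator norms: a factor $\{w\}_{A_p}$ (at best, from the sharp bound for $M$, which is the paper's \eqref{HLM}; your $[w]_{A_p}^{1/(p-1)}$ is Buckley's weaker form) times the factor $\{w\}_{A_p}(w)_{A_p}$ you aim to prove for the singular integral. The best you can reach this way is $\{w\}_{A_p}^{2}(w)_{A_p}$, and with your bookkeeping $\bigl(\{w\}_{A_p}(w)_{A_p}\bigr)^{2}$, whereas the proposition asserts the single factor $\{w\}_{A_p}(w)_{A_p}$. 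Since the whole content of the statement is quantitative, this loss is fatal to the argument as proposed; qualitatively it would prove \emph{some} $L^p(w)$ bound, but not this one. There is also a double counting in your interpolation step: sparse domination for an $\omega_j$-Calder\'on--Zygmund operator yields only $\{w\}_{A_p}$ (not $\{w\}_{A_p}(w)_{A_p}$) per piece, and the factor $(w)_{A_p}$ is supposed to appear exactly once, from summing $\sum_j (1+N(j))2^{-\tau N(j-1)/(w)_{A_p}}\lesssim (w)_{A_p}$ after the change-of-measure interpolation with $\epsilon\sim 1/(w)_{A_p}$; writing the per-piece bound with $\{w\}_{A_p}(w)_{A_p}$ and then extracting a further $(w)_{A_p}$-type factor from the summation inflates the final power again.

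The paper avoids the composition loss by never separating the $k$-supremum from the operator: it applies the sparse domination (Lemma \ref{lem 3}) directly to the sublinear maximal operators $\mathcal{T}_j^{\alpha,N}f=\sup_{k}|\tilde{T}_j^{\alpha,N}f\ast\Delta[2^{k}]\phi|$. This requires exactly the ingredients your plan skips: kernel estimates for $K_j^{\alpha,N}\ast\Delta[2^{k}]\phi$ that are uniform in $k$ (Lemma \ref{caal}, including the two-sided smoothness \eqref{smo}--\eqref{smo2}), and weak type $(1,1)$ bounds with constant $(1+N(j))\|\Omega\|_{L^q(\Sigma)}$ for both $\mathcal{T}_j^{\alpha,N}$ and its grand maximal truncation $\mathcal{M}_j^{\alpha,N}$ (Lemma \ref{twoweak}, which also uses the appendix's maximal sparse domination for the truncated operator $\tilde{T}_j^{\alpha,N,\#}$). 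This yields \eqref{phiy1} with the single factor $\{w\}_{A_p}$, and only then does Stein--Weiss interpolation with change of measure and the choice $N(j)=2^{j}$ produce the one extra factor $(w)_{A_p}$. If you want to salvage your route, you must prove the weighted bound for the maximal-in-$k$ family directly at the level of each $\tilde{T}_j^{\alpha,N}$, i.e.\ reproduce the uniform-in-$k$ kernel analysis rather than reduce to $M\circ T_{\Omega,\alpha}$.
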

\begin{proof}
It suffices to show
\begin{align}\label{goalmax}
\Big\|\sup\limits_{k\in\mathbb{Z}}|T_{\Omega,\alpha}(-\Delta_{\HH})^{-\alpha/2}f\ast\Delta[2^{k}]\phi |\Big\|_{L^{p}(w)}\leq C_{\mathbb{Q},\alpha,p,q}\|\Omega\|_{L^{q}(\Sigma)}\{w\}_{A_p}(w)_{A_p}\|f\|_{L^{p}(w)}.
\end{align}
To this end, we apply the decomposition in Section \ref{kf} to see that
\begin{align*}
T_{\Omega,\alpha}(-\Delta_{\HH})^{-\alpha/2}f\ast \Delta[2^{k}]\phi =\sum_{j=0}^{\infty}\tilde{T}_{j}^{\alpha,N}f\ast\Delta[2^{k}]\phi.
\end{align*}
Thus, for $1<p<\infty$ and $w\in A_{p}$,
\begin{align*}
\Big\|\sup\limits_{k\in\mathbb{Z}}|T_{\Omega,\alpha}(-\Delta_{\HH})^{-\alpha/2}f\ast\Delta[2^{k}]\phi |\Big\|_{L^{p}(w)}
\leq \sum_{j=0}^{\infty}\Big\|\sup\limits_{k\in\mathbb{Z}}|\tilde{T}_{j}^{\alpha,N}f\ast\Delta[2^{k}]\phi|\Big\|_{L^{p}(w)}.
\end{align*}
It follows easily from Lemma \ref{ckt} and the equality \eqref{equa} that for $j\geq 1$,
\begin{align}\label{08}
&\Big\|\sup\limits_{k\in\mathbb{Z}}|\tilde{T}_{0}^{\alpha,N}f\ast\Delta[2^{k}]\phi|\Big\|_{2}\lesssim \|\tilde{T}_{0}^{\alpha,N}f\|_{2}\lesssim \|\Omega\|_{L^{q}(\Sigma)}\|f\|_{2},\nonumber\\
&\Big\|\sup\limits_{k\in\mathbb{Z}}|\tilde{T}_{j}^{\alpha,N}f\ast\Delta[2^{k}]\phi|\Big\|_{2}\lesssim \|\tilde{T}_{j}^{\alpha,N}f\|_{2}\lesssim 2^{-\tau N(j-1)}\|\Omega\|_{L^{q}(\Sigma)}\|f\|_{2}.
\end{align}
To continue, we claim that for any $j\geq 0$,
\begin{align}\label{phiy1}
\Big\|\sup\limits_{k\in\mathbb{Z}}|\tilde{T}_{j}^{\alpha,N}f\ast\Delta[2^{k}]\phi|\Big\|_{L^{p}(w)}\lesssim (1+N(j))\|\Omega\|_{L^{q}(\Sigma)}\{w\}_{A_{p}}\|f\|_{L^{p}(w)}.
\end{align}
We assume \eqref{phiy1} for the moment, whose proof will be given later. Taking $w=1$ in \eqref{phiy1} and then applying interpolation between \eqref{08} and \eqref{phiy1}, we see that for any $j\geq 1$,
\begin{align*}
\Big\|\sup\limits_{k\in\mathbb{Z}}|\tilde{T}_{j}^{\alpha,N}f\ast\Delta[2^{k}]\phi|\Big\|_{p}\lesssim \|\tilde{T}_{j}^{\alpha,N}f\|_{p}\lesssim 2^{-\tau_{p} N(j-1)}(1+N(j))\|\Omega\|_{L^{q}(\Sigma)}\|f\|_{p}
\end{align*}
for some constant $\tau_{p}>0$.

Next by choosing $\epsilon=\frac{1}{2}c_{\mathbb{Q}}/(w)_{A_{p}}$, we see that the estimate \eqref{phiy1} gives
\begin{align}
\Big\|\sup\limits_{k\in\mathbb{Z}}|\tilde{T}_{j}^{\alpha,N}f\ast\Delta[2^{k}]\phi|\Big\|_{L^{p}(w^{1+\epsilon})}\lesssim (1+N(j))\|\Omega\|_{L^{q}(\Sigma)}\{w\}_{A_{p}}^{1+\epsilon}\|f\|_{L^{p}(w^{1+\epsilon})}.
\end{align}
Then applying interpolation with change of measure, we obtain that
\begin{align}
\Big\|\sup\limits_{k\in\mathbb{Z}}|\tilde{T}_{j}^{\alpha,N}f\ast\Delta[2^{k}]\phi|\Big\|_{L^{p}(w)}\lesssim \|\Omega\|_{L^{q}(\Sigma)}(1+N(j))2^{-\tau_{p,\mathbb{Q}}N(j-1)/(w)_{A_{p}}}\{w\}_{A_{p}}\|f\|_{L^{p}(w)}
\end{align}
for some constant $\tau_{p,\mathbb{Q}}>0$.

If we choose $N(j)=2^{j}$, then
\begin{align*}
\Big\|\sup\limits_{k\in\mathbb{Z}}|T_{\Omega,\alpha}(-\Delta_{\HH})^{-\alpha/2}f\ast\Delta[2^{k}]\phi |\Big\|_{L^{p}(w)}
&\leq \sum_{j=0}^{\infty}\Big\|\sup\limits_{k\in\mathbb{Z}}|\tilde{T}_{j}^{\alpha,N}f\ast\Delta[2^{k}]\phi|\Big\|_{L^{p}(w)}\\
&\lesssim \sum_{j=0}^{\infty}\|\Omega\|_{L^{q}(\Sigma)}(1+N(j))2^{-\tau_{p,\mathbb{Q}}N(j-1)/(w)_{A_{p}}}\{w\}_{A_{p}}\|f\|_{L^{p}(w)}\\
&\lesssim \|\Omega\|_{L^{q}(\Sigma)}\{w\}_{A_{p}}(w)_{A_{p}}\|f\|_{L^{p}(w)}.
\end{align*}

Now we return to give the proof of \eqref{phiy1}. To this end, we first give the kernel estimates of $\{K_{j}^{\alpha,N}\ast\Delta[2^{k}]\phi \}$. We would like to establish the following inequalities.
\end{proof}
\begin{lemma}\label{caal}
There exists a constant $C_{\mathbb{Q},\alpha,q}>0$ such that for any $j\geq 0$,
\begin{align}\label{siz}
\sup\limits_{k\in\mathbb{Z}}|K_{j}^{\alpha,N}\ast\Delta[2^{k}]\phi (x)|\leq C_{\mathbb{Q},\alpha,q}\frac{\|\Omega\|_{L^{q}(\Sigma)}}{\rho(x)^{\mathbb{Q}}},
\end{align}
and if $\rho(x)\geq 2A_{0}\rho(y)$,
\begin{align}\label{smo}
\sup\limits_{k\in\mathbb{Z}}|K_{j}^{\alpha,N}\ast \Delta[2^{k}]\phi(y^{-1}x)-K_{j}^{\alpha,N}\ast \Delta[2^{k}]\phi (x)|\leq C_{\mathbb{Q},\alpha,q} \frac{\omega_{j}(\rho(y)/\rho(x))}{\rho(x)^{\mathbb{Q}}},
\end{align}
and
\begin{align}\label{smo2}
\sup\limits_{k\in\mathbb{Z}}|K_{j}^{\alpha,N}\ast \Delta[2^{k}]\phi(x y^{-1})-K_{j}^{\alpha,N}\ast \Delta[2^{k}]\phi (x)|\leq C_{\mathbb{Q},\alpha,q} \frac{\omega_{j}(\rho(y)/\rho(x))}{\rho(x)^{\mathbb{Q}}},
\end{align}
where $
\omega_{j}(t)\leq \|\Omega\|_{L^{q}(\Sigma)}\min\{1,2^{N(j)}t\}.$
\end{lemma}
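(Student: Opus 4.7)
The plan is to reduce each of the three estimates in Lemma \ref{caal} to the corresponding estimates for $K_{j}^{\alpha,N}$ already established in Lemma \ref{cal}, using a case split based on whether $\rho(x)$ is large or small compared to $2^{k}$. Throughout, I exploit the fact that $\Delta[2^{k}]\phi$ is an $L^{1}$-normalized smooth bump supported in $\{\rho(z)\lesssim 2^{k}\}$, with $\|\Delta[2^{k}]\phi\|_{\infty}\sim 2^{-\mathbb{Q}k}$ and $\|X_{i}\Delta[2^{k}]\phi\|_{\infty}\lesssim 2^{-(\mathbb{Q}+1)k}$, and that $\tilde{\phi}=\phi$.

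For the size bound \eqref{siz}, write $K_{j}^{\alpha,N}\ast \Delta[2^{k}]\phi(x)=\int K_{j}^{\alpha,N}(y)\Delta[2^{k}]\phi(y^{-1}x)\,dy$. If $\rho(x)\geq C_{0}2^{k}$, the bump localizes $y$ to $\rho(y^{-1}x)\lesssim 2^{k}\leq \rho(x)/2$, so that $\rho(y)\sim \rho(x)$; then \eqref{sizec} applied pointwise and integration against the $L^{1}$-normalized bump give the claim. For $\rho(x)\leq C_{0}2^{k}$, the bound $1/\rho^{\mathbb{Q}}$ is only barely locally integrable, so I would return to the finer decomposition underlying Lemma \ref{cal}, namely the two-part pointwise bound \eqref{hjhj} for each dyadic piece $\Delta[2^{k'-N(j)}]\phi\ast R^{\alpha}\ast A_{k'}^{\alpha}K_{\alpha}^{0}$. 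After a further convolution with $\Delta[2^{k}]\phi$, each summand is controlled by $\|\Omega\|_{L^{q}(\Sigma)}2^{-\mathbb{Q}k'}$ on a ball of radius $\sim 2^{\max(k,k')}$ and decays like $\rho(x)^{-\mathbb{Q}-1-[\alpha]+\alpha}$ outside; summing the resulting geometric series in $k'$ yields $\lesssim \|\Omega\|_{L^{q}(\Sigma)}/\rho(x)^{\mathbb{Q}}$ uniformly in $k$.

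For the left smoothness \eqref{smo}, assuming $\rho(x)\geq 2A_{0}\rho(y)$, I express the difference as $\int\bigl(K_{j}^{\alpha,N}(z^{-1}y^{-1}x)-K_{j}^{\alpha,N}(z^{-1}x)\bigr)\Delta[2^{k}]\phi(z)\,dz$. When $\rho(x)\geq C_{0}2^{k}$, the bump restricts $z$ so that $\rho(z^{-1}x)\sim \rho(x)\geq 2A_{0}\rho(y)$; then \eqref{jkl} is directly applicable inside the integral and integration against the $L^{1}$-normalized bump yields the bound $\omega_{j}(\rho(y)/\rho(x))/\rho(x)^{\mathbb{Q}}$. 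When $\rho(x)\lesssim 2^{k}$, I would use the gradient bound \eqref{gra} combined with the mean value theorem on $\HH$ to obtain a bound with the factor $\min\{1,2^{N(j)}\rho(y)/\rho(x)\}$, which reproduces the required modulus $\omega_{j}$ after multiplying by $\|\Omega\|_{L^{q}(\Sigma)}$.

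For the right smoothness \eqref{smo2}, a direct transfer is not available because the derivative estimate in the proof of Lemma \ref{cal} is produced via left-invariant vector fields. I would duplicate that argument with $X^{I}$ replaced by $Y^{I}$ and with the right-Taylor inequality in Lemma \ref{taylor} in place of the left one; this yields a parallel right-gradient bound $|Y_{i}K_{j}^{\alpha,N}(x)|\lesssim 2^{N(j)}\|\Omega\|_{L^{q}(\Sigma)}/\rho(x)^{\mathbb{Q}+1}$. The symmetry $\tilde{\phi}=\phi$ then ensures that the same two-case analysis carries over verbatim under right translation. The principal technical difficulty lies in the regime $\rho(x)\lesssim 2^{k}$, where the naive size bound on $K_{j}^{\alpha,N}$ is not strong enough to survive the mollification: one must descend into the original dyadic decomposition and sum precisely, taking care that the modulus $\omega_{j}(t)\leq \|\Omega\|_{L^{q}(\Sigma)}\min\{1,2^{N(j)}t\}$ survives the extra convolution unchanged. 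A secondary subtlety is the right-smoothness estimate, which requires revisiting rather than invoking Lemma \ref{cal}.
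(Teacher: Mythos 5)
Your treatment of the regime $\rho(x)\gtrsim 2^{k}$ is correct and coincides with the paper's (on the support of $z\mapsto\Delta[2^{k}]\phi(z^{-1}x)$ one has $\rho(z)\sim\rho(x)$, so \eqref{sizec}, \eqref{jkl} and \eqref{gra} transfer directly), and your plan for \eqref{smo2} via right Taylor polynomials and right-invariant gradients is consistent with the paper, which only says the calculation is similar. The genuine gap is in the regime $\rho(x)\lesssim 2^{k}$: the mechanism you propose there uses only size information and ignores cancellation, and it cannot work. After mollification at scale $2^{k}$, every dyadic piece $P_{k'}:=\Delta[2^{k'-N(j)}]\phi\ast R^{\alpha}\ast A_{k'}^{\alpha}K_{\alpha}^{0}$ with $k'\leq k$ is smeared over a ball of radius $\sim 2^{k}$ that contains $x$, and the only information in \eqref{hjhj} is that $\|P_{k'}\|_{1}\lesssim\|\Omega\|_{L^{q}(\Sigma)}$ uniformly in $k'$; this yields $|P_{k'}\ast\Delta[2^{k}]\phi(x)|\lesssim\|\Omega\|_{L^{q}(\Sigma)}2^{-\mathbb{Q}k}$ with no decay in $k-k'$, so the sum over the infinitely many $k'\leq k$ diverges. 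Your stated per-piece bound ($\|\Omega\|_{L^{q}(\Sigma)}2^{-\mathbb{Q}k'}$ on a ball of radius $\sim 2^{\max(k,k')}$) is even worse: for $\rho(x)\lesssim 2^{k}$ all $k'\in\mathbb{Z}$ contribute and $\sum_{k'}2^{-\mathbb{Q}k'}$ diverges as $k'\to-\infty$; there is no geometric series to sum. Likewise, for \eqref{smo} you cannot close the small-$\rho(x)$ case by pushing \eqref{gra} through the convolution in absolute value: when $\rho(x)\sim 2^{k}$ the annular support of $\Delta[2^{k}]\phi(z^{-1}x)$ contains points with $\rho(z)$ arbitrarily small, and the integral of $\rho(z)^{-\mathbb{Q}-1}$ against the bump diverges (already the size bound $\rho(z)^{-\mathbb{Q}}$ gives a logarithmic divergence for \eqref{siz} in this shell, which is the failure you correctly sensed but did not repair).

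The missing idea is the mean-zero property $\int_{\HH}K_{j}^{\alpha,N}(z)\,dz=0$. In the case $\rho(x)\leq A_{0}\kappa_{0}2^{k+4}$ the paper writes $K_{j}^{\alpha,N}\ast\Delta[2^{k}]\phi(x)=\int_{\HH}\big(\Delta[2^{k}]\phi(z^{-1}x)-\Delta[2^{k}]\phi(x)\big)K_{j}^{\alpha,N}(z)\,dz$, and pairs the size bound \eqref{sizec} with the Lipschitz bound $|\Delta[2^{k}]\phi(z^{-1}x)-\Delta[2^{k}]\phi(x)|\lesssim 2^{-(\mathbb{Q}+1)k}\rho(z)$; the extra factor $\rho(z)$ makes the integral converge and gives $\lesssim\|\Omega\|_{L^{q}(\Sigma)}2^{-\mathbb{Q}k}\lesssim\|\Omega\|_{L^{q}(\Sigma)}\rho(x)^{-\mathbb{Q}}$, which is \eqref{nmnm}. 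The gradient bound needed for \eqref{smo} in the same regime is the identical computation with $\phi$ replaced by $X_{i}\phi$ (the derivative is put on the bump, not on $K_{j}^{\alpha,N}$), which is also why no factor $2^{N(j)}$ appears there. If you insist on arguing at the level of the dyadic pieces, you must exploit their cancellation in the same way: the mean zero of $P_{k'}$ against the smoothness of $\Delta[2^{k}]\phi$ produces a factor $\sim 2^{k'-k}$ for $k'\leq k$, and only then does the sum in $k'$ become geometric. Without some such cancellation input your small-$\rho(x)$ estimates for both \eqref{siz} and \eqref{smo} do not hold as argued.
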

\begin{proof}
We first verify \eqref{siz}.

\textbf{Case 1:} If $\rho(x)\leq A_{0}\kappa_{0}2^{k+4}$, then since $\int_{\HH}K_{j}^{\alpha,N}(z)dz=0$,
\begin{align*}
K_{j}^{\alpha,N}\ast \Delta[2^{k}]\phi (x)=\int_{\HH}(\Delta[2^{k}]\phi(z^{-1}x)-\Delta[2^{k}]\phi(x))K_{j}^{\alpha,N}(z)dz.
\end{align*}
By the support of $\Delta[2^{k}]\phi$, we see that $\rho(z)\leq A_{0}^{2}\kappa_{0}2^{k+5}$. This, combined with \eqref{sizec} and the mean value theorem on homogeneous groups (see \cite{FoSt}) yields
\begin{align}\label{nmnm}
|K_{j}^{\alpha,N}\ast\Delta[2^{k}]\phi (x)|
&\leq \int_{\rho(z)\leq A_{0}^{2}\kappa_{0}2^{k+5}}|\Delta[2^{k}]\phi(z^{-1}x)-\Delta[2^{k}]\phi(x))||K_{j}^{\alpha,N}(z)|dz\nonumber\\
&\lesssim\|\Omega\|_{L^{q}(\Sigma)}\int_{\rho(z)\leq A_{0}^{2}\kappa_{0}2^{k+5}}|\Delta[2^{k}]\phi(z^{-1}x)-\Delta[2^{k}]\phi(x)|\frac{1}{\rho(z)^{\mathbb{Q}}}dz\nonumber\\
&\lesssim \frac{\|\Omega\|_{L^{q}(\Sigma)}}{2^{(\mathbb{Q}+1)k}}\int_{\rho(z)\leq A_{0}^{2}\kappa_{0}2^{k+5}}\frac{1}{\rho(z)^{\mathbb{Q}-1}}dz\lesssim \frac{\|\Omega\|_{L^{q}(\Sigma)}}{2^{\mathbb{Q}k}}\lesssim \frac{\|\Omega\|_{L^{q}(\Sigma)}}{\rho(x)^{\mathbb{Q}}}.
\end{align}

\textbf{Case 2:} If $\rho(x)\geq A_{0}\kappa_{0}2^{k+4}$, then by the support of $\Delta[2^{k}]\phi$, we see that $\rho(z)\geq \frac{\rho(x)}{2}$. This, in combination with \eqref{sizec}, implies
\begin{align*}
|K_{j}^{\alpha,N}\ast \Delta[2^{k}]\phi (x)|\lesssim\|\Omega\|_{L^{q}(\Sigma)}\int_{\rho(z)\geq\frac{\rho(x)}{2}}|\Delta[2^{k}]\phi(z^{-1}x)|\frac{1}{\rho(z)^{\mathbb{Q}}}dz\lesssim\frac{\|\Omega\|_{L^{q}(\Sigma)}}{\rho(x)^{\mathbb{Q}}}.
\end{align*}
Combining the estimates of two cases, we get \eqref{siz}.

Next, we verify \eqref{smo}. By the mean value theorem on homogeneous groups, it suffices to estimate $|X_{i} K_{j}^{\alpha,N}\ast \Delta[2^{k}]\phi(x)|$ for any $i=1,2,\cdots,n$.

\textbf{Case 1:} If $\rho(x)\leq A_{0}\kappa_{0}2^{k+4}$,
then \eqref{nmnm} holds with $\phi$ replaced by $X_{i}\phi$. Thus,
\begin{align*}
|X_{i} K_{j}^{\alpha,N}\ast\Delta[2^{k}]\phi (x)|=\frac{1}{2^{k}}|K_{j}^{\alpha,N}\ast\Delta[2^{k}](X_{i} \phi) (x)|
\lesssim \frac{\|\Omega\|_{L^{q}(\Sigma)}}{2^{(\mathbb{Q}+1)k}}\lesssim \frac{\|\Omega\|_{L^{q}(\Sigma)}}{\rho(x)^{\mathbb{Q}+1}}.
\end{align*}

\textbf{Case 2:} If $\rho(x)\geq A_{0}\kappa_{0}2^{k+4}$, then it follows from the support property of $\phi$ and \eqref{gra} that
\begin{align*}
|X_{i} K_{j}^{\alpha,N}\ast\Delta[2^{k}]\phi (x)|&\lesssim\|\Omega\|_{L^{q}(\Sigma)}\int_{\rho(z)\geq\frac{\rho(x)}{2}}|\Delta[2^{k}]\phi(z^{-1}x)|\frac{2^{N(j)}}{\rho(z)^{\mathbb{Q}+1}}dz\lesssim\frac{2^{N(j)}\|\Omega\|_{L^{q}(\Sigma)}}{\rho(x)^{\mathbb{Q}+1}}.
\end{align*}

Combining the estimates of two cases, we see that for if $\rho(x)\geq 2A_{0}\rho(y)$,
\begin{align}
\sup\limits_{k\in\mathbb{Z}}|K_{j}^{\alpha,N}\ast \Delta[2^{k}]\phi(y^{-1}x)-K_{j}^{\alpha,N}\ast\Delta[2^{k}]\phi (x)|\lesssim \|\Omega\|_{L^{q}(\Sigma)}2^{N(j)}\frac{\rho(y)}{\rho(x)^{\mathbb{Q}+1}}.
\end{align}
This, in junction with \eqref{siz}, yields \eqref{smo}. Following a similar calculation, we obtain \eqref{smo2} as well. This ends the proof of Lemma \ref{caal}.
\end{proof}

In the following, we recall the grand maximal truncated operator $\mathcal{M}_{T}$ on homogeneous groups defined as follows.
\begin{align*}
\mathcal{M}_{T}f(x):=\sup\limits_{B\ni x}\esssup\limits_{\xi\in B}|T(f\chi_{\HH\backslash C_{A_{0}}B})(\xi)|,
\end{align*}
where $C_{A_{0}}$ is a fixed constant depending only on $A_{0}$ (for the precise definition of $C_{A_0}$, we refer the readers to the notation $C_{\tilde{j}_{0}}$ in \cite{DGKLWY}), and the first supremum is taken over all balls $B\subset\HH$ containing~$x$.
\begin{lemma}\label{lem 3} Let  T be a sublinear operator. Assume that the operators $T$ and $\mathcal{M}_T$ are of weak type $(1,1)$, then for every compactly supported $f\in L^1(\HH),$ we have
%\begin{align*}
%\|Tf\|_{L^p(w)}\lesssim (\|T\|_{L^1\rightarrow L^{1,\infty}}+\|\mathcal{M}_T\|_{L^1\rightarrow L^{1,\infty}})\{w\}_{A_p}\|f\|_{L^p(w)}.
%\end{align*}
\begin{align*}
|Tf(x)|\leq C_{\mathbb{Q}}(\|T\|_{L^{1}\rightarrow L^{1,\infty}}+\|\mathcal{M}_{T}\|_{L^{1}\rightarrow L^{1,\infty}})\mathcal{A}_{\mathcal{S}}(|f|)(x),
\end{align*}
where $\mathcal{A}_{\mathcal{S}}$ is the sparse operator (see for example \cite{DGKLWY}).
\end{lemma}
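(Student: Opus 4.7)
The plan is to follow Lerner's pointwise sparse domination scheme, transplanted to the homogeneous-group setting via a dyadic cube system of Christ (or Hyt\"onen--Kairema) type. First I would reduce to a fixed ball $B_0 \supset \supp f$, since outside such a ball the sparse bound is immediate from a single term. Within $B_0$, the task is to build iteratively a sparse collection $\mathcal{S}$ of sub-cubes together with matching pointwise estimates.

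For each cube $B$ entering the construction, define the exceptional set
$$E_B := \left\{x \in B : |T(f\chi_{C_{A_0}B})(x)| > C_1 \langle |f| \rangle_{C_{A_0}B}\right\} \cup \left\{x \in B : \mathcal{M}_T(f\chi_{C_{A_0}B})(x) > C_1 \langle |f| \rangle_{C_{A_0}B}\right\},$$
with $C_1 := C_{\mathbb{Q}}(\|T\|_{L^1 \to L^{1,\infty}} + \|\mathcal{M}_T\|_{L^1 \to L^{1,\infty}})$. The two weak-type hypotheses together with Chebyshev's inequality give $|E_B| \leq \tfrac{1}{2}|B|$ once the absolute constant $C_{\mathbb{Q}}$ is chosen sufficiently large. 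A Calder\'on--Zygmund selection among the dyadic descendants of $B$ then extracts a pairwise disjoint family of sub-cubes $\{P_j\}$ exhausting $E_B$ modulo a null set and satisfying the sparseness condition $\sum_j |P_j| \leq \tfrac{1}{2}|B|$. On $B \setminus \bigcup_j P_j$ the very definition of $E_B$ yields
$$|Tf(x)| \leq |T(f\chi_{\HH \setminus C_{A_0}B})(x)| + C_1 \langle |f| \rangle_{C_{A_0}B},$$
and it is here that the grand-maximal operator $\mathcal{M}_T$ is essential: it promotes the pointwise control of $|T(f\chi_{C_{A_0}B})|$ on $B\setminus E_B$ to a true essential-supremum bound on the tail piece $|T(f\chi_{\HH\setminus C_{A_0}B})|$ for a.e.\ $x$ in each descendant, which is exactly what makes the recursion close.

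Iterating this construction downward on each $P_j$ produces a tree whose vertices form the sparse family $\mathcal{S}$, and the telescoping of the local bounds accumulates into
$$|Tf(x)| \leq C_1 \sum_{Q \in \mathcal{S}} \langle |f| \rangle_{C_{A_0}Q} \chi_Q(x) = C_1 \mathcal{A}_{\mathcal{S}}(|f|)(x),$$
as desired. The main anticipated obstacle is the bookkeeping with the quasi-metric constant $A_0$: the dilation $C_{A_0}$ hidden in the definition of $\mathcal{M}_T$ must be compatible with the enlargement constants of the Christ-type cubes so that the containment $C_{A_0}P_j \subset C_{A_0}B$ is controlled uniformly at every stage, and the almost everywhere exhaustion of $B_0$ by the iterated exceptional sets relies on the geometric contraction $|E_B|\leq \tfrac{1}{2}|B|$ passing cleanly through the recursion. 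These bookkeeping issues are handled essentially as in \cite{FL} via the Hyt\"onen--Kairema adjusted dyadic systems; once they are in place, the proof runs in parallel with the Euclidean Lerner--Nazarov argument.
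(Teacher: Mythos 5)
Your proposal is correct and coincides with the proof the paper relies on: the paper establishes Lemma \ref{lem 3} simply by citing Lerner's pointwise domination argument (\cite[Remark 4.1 and Theorem 4.2]{Ler1}) and its extension to homogeneous groups in \cite{FL}, and your sketch is exactly that argument --- localized exceptional sets controlled through the weak $(1,1)$ bounds for $T$ and $\mathcal{M}_T$, a Calder\'{o}n--Zygmund stopping-time selection with $\sum_j|P_j|\le\tfrac12|B|$, the grand maximal operator supplying the essential-supremum control of the tail at a good point of each stopping cube, and iteration over a dyadic system adapted to the quasi-metric. The only points to tighten in a full write-up (the stopping threshold must be taken small enough relative to the doubling constant so that each $P_j$ meets $B\setminus E_B$, and the passage from the local estimate on $B_0$ to all of $\HH$ uses an exhaustion by balls with bounded overlap rather than ``a single term'' outside $B_0$) are handled exactly as in \cite{Ler1} and \cite{FL}.
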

\begin{proof}
The result in the Euclidean setting was shown in \cite{Ler1}. In our setting, by noting that the homogeneous group is a special case of space of homogeneous type in the sense of Coifman--Weiss, we can obtain this result by following the approach in \cite{Ler1} and using the dyadic grid and sparse operator on space of homogeneous type as developed in \cite{DGKLWY}. We leave the details to readers.
%, while \cite{FL} (previous version in arxiv) essentially extended the result to homogeneous groups
%(see also \cite[Remark 4.1 and Theorem 4.2]{Ler1}).
\end{proof}

For simplicity, we denote $\mathcal{T}_{j}^{\alpha,N}f:=\sup\limits_{k\in\mathbb{Z}}|\tilde{T}_{j}^{\alpha,N}f\ast\Delta[2^{k}]\phi|$ and $\mathcal{M}_{j}^{\alpha,N}f:=\mathcal{M}_{\mathcal{T}_{j}^{\alpha,N}}f$.  It follows from  Lemma \ref{lem 3} and the quantitative weighted $L^{p}$ boundedness of sparse operators that the estimate \eqref{phiy1} is a direct consequence of the following lemma.
\begin{lemma}\label{twoweak}
There exists a constant $C_{\mathbb{Q},\alpha,q}>0$ such that for any $j\geq 0$,
\begin{align}\label{TT}
\|\mathcal{T}_{j}^{\alpha,N}f\|_{L^{1,\infty}}\leq C_{\mathbb{Q},\alpha,q}\|\Omega\|_{L^{q}(\Sigma)}(1+N(j))\|f\|_{1},
\end{align}
and
\begin{align}\label{MM}
\|\mathcal{M}_{j}^{\alpha,N}f\|_{L^{1,\infty}}\leq C_{\mathbb{Q},\alpha,q}\|\Omega\|_{L^{q}(\Sigma)}(1+N(j))\|f\|_{1}.
\end{align}
\end{lemma}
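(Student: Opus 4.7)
The plan is to deduce both weak-type $(1,1)$ bounds from Calder\'on--Zygmund theory applied to the family of convolution kernels $\{K_j^{\alpha,N}\ast \Delta[2^k]\phi\}_{k\in\mathbb Z}$, whose uniform size, regularity, and Hörmander-type bounds have just been collected in Lemma~\ref{caal}. The key quantitative input is that the modulus $\omega_j(t)\lesssim \|\Omega\|_{L^q(\Sigma)}\min\{1,2^{N(j)}t\}$ has Dini norm $\|\omega_j\|_{\mathrm{Dini}}\lesssim (1+N(j))\|\Omega\|_{L^q(\Sigma)}$, which is the sole source of the $(1+N(j))$ factor appearing in both \eqref{TT} and \eqref{MM}.

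For \eqref{TT}, I would first observe the pointwise domination $\mathcal T_j^{\alpha,N}f(x)\leq c\,M(\tilde T_j^{\alpha,N}f)(x)$, which follows from the approximate-identity structure of $\Delta[2^k]\phi$. Combining this with the $L^2$-boundedness of $M$ and of $\tilde T_j^{\alpha,N}$ (the latter uniform in $N(j)$ by summing the estimates of Lemma~\ref{ckt}) yields $\|\mathcal T_j^{\alpha,N}\|_{L^2\to L^2}\lesssim \|\Omega\|_{L^q(\Sigma)}$. A standard Calder\'on--Zygmund decomposition $f=g+\sum_i b_i$ at level $\lambda$ then handles the good part via $L^2$-Chebyshev; for the bad part, the cancellation $\int b_i=0$ together with the uniform Hörmander bound \eqref{smo} gives
\begin{equation*}
\int_{\HH\setminus 2A_0 B_i}\mathcal T_j^{\alpha,N}b_i(x)\,dx\lesssim \|\omega_j\|_{\mathrm{Dini}}\|b_i\|_1\lesssim (1+N(j))\|\Omega\|_{L^q(\Sigma)}\|b_i\|_1,
\end{equation*}
which after summation in $i$ produces \eqref{TT}.

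For \eqref{MM}, the plan is to use the right-translation regularity \eqref{smo2} to establish the pointwise oscillation
\begin{equation*}
\mathcal T_j^{\alpha,N}(f\chi_{\HH\setminus C_{A_0}B})(\xi)\leq \mathcal T_j^{\alpha,N}(f\chi_{\HH\setminus C_{A_0}B})(x)+c(1+N(j))\|\Omega\|_{L^q(\Sigma)}Mf(x)
\end{equation*}
for $x,\xi\in B$, obtained by a dyadic decomposition of the kernel-difference integral in $y\in\HH\setminus C_{A_0}B$. Taking essential suprema reduces \eqref{MM} to a weak-type $(1,1)$ bound for the maximal truncation $\mathcal T_j^{\alpha,N,\star}f(x):=\sup_{B\ni x}\mathcal T_j^{\alpha,N}(f\chi_{\HH\setminus C_{A_0}B})(x)$, which I would obtain through a Cotlar-type argument: for fixed $R$ and $k$, compare the truncated integral at $x$ with $(\tilde T_j^{\alpha,N}f)\ast\Delta[2^k]\phi(\xi)$ on a concentric ball $B(x,R/2)$, bound the smooth error via \eqref{smo2}, and select a good $\xi_0$ by combining the weak-$(1,1)$ estimate of $\mathcal T_j^{\alpha,N}$ from \eqref{TT} with that of $M$. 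The main obstacle will be the maximal-of-maximal structure: the supremum over $k$ inside $\mathcal T_j^{\alpha,N}$ must be linearized cleanly against the supremum over balls in $\mathcal M_j^{\alpha,N}$, and in the Cotlar selection one must avoid invoking nonexistent bounds such as the boundedness of $M$ on $L^{1,\infty}$, instead executing the good-$\xi_0$ argument uniformly in $k$ (via a Kolmogorov-type substitute on each dyadic annulus) so as to preserve the $(1+N(j))$ dependence throughout.
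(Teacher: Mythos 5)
Your treatment of \eqref{TT} is essentially the paper's: pointwise control of $\mathcal{T}_{j}^{\alpha,N}f$ by $M(\tilde{T}_{j}^{\alpha,N}f)$ gives the $L^{2}$ bound (via \eqref{08}), and a Calder\'on--Zygmund decomposition together with the uniform-in-$k$ H\"ormander bound \eqref{smo} and $\|\omega_{j}\|_{\rm Dini}\lesssim(1+N(j))\|\Omega\|_{L^{q}(\Sigma)}$ handles the bad part; this matches the argument in the text. For \eqref{MM}, however, you take a genuinely different route. The paper also starts by splitting off the oscillation and the local annulus via \eqref{smo2}, \eqref{siz}, but then it compares $K_{j}^{\alpha,N}\ast\Delta[2^{k}]\phi$ with the unsmoothed kernel $K_{j}^{\alpha,N}$, arriving at the pointwise bound $\mathcal{M}_{j}^{\alpha,N}f\lesssim(\|\Omega\|_{L^{q}(\Sigma)}+\|\omega_{j}\|_{\rm Dini})Mf+\tilde{T}_{j}^{\alpha,N,\#}f$, and then disposes of the truncated maximal operator $\tilde{T}_{j}^{\alpha,N,\#}$ by the appendix's sparse domination (Proposition \ref{dominnn}, which requires the unsmoothed kernel estimates of Lemma \ref{cal}) plus the weak $(1,1)$ bounds for sparse operators and $M$. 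You instead reduce, via the right-translation smoothness \eqref{smo2}, to the centered grand maximal truncation $\mathcal{T}_{j}^{\alpha,N,\star}$ and run a Cotlar-type argument fed by \eqref{TT}: split $f=f\chi_{C_{A_{0}}B}+f\chi_{\HH\setminus C_{A_{0}}B}$, control the far-field oscillation by \eqref{smo2} (uniformly in $k$), and handle the local piece and the comparison term by a Kolmogorov/$L^{\delta}$-average over a ball comparable to $B$, using $\|\mathcal{T}_{j}^{\alpha,N}\|_{L^{1}\to L^{1,\infty}}\lesssim(1+N(j))\|\Omega\|_{L^{q}(\Sigma)}$ and the boundedness of $M$ on $L^{1/\delta,\infty}$ for $0<\delta<1$ (this, rather than a "Kolmogorov substitute on dyadic annuli", is the clean way to execute your final step and is exactly the standard fix for the non-boundedness of $M$ on $L^{1,\infty}$). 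Your version is more self-contained: it avoids Proposition \ref{dominnn} and the unsmoothed-kernel Calder\'on--Zygmund estimates at this point, and it visibly preserves the $(1+N(j))$ factor since every ingredient carries it linearly. What the paper's route buys is economy within its own architecture: the sparse domination of maximal truncations is developed in the appendix anyway (its Lemma \ref{mt2} is itself a Cotlar-type estimate), and routing through $\tilde{T}_{j}^{\alpha,N,\#}$ reuses Lemma \ref{cal} directly. Do write out the Cotlar/Kolmogorov step carefully, since in your sketch it is the only place that is not yet a complete argument, but it is standard and all the required inputs (\eqref{siz}, \eqref{smo2}, \eqref{TT}) are at hand.
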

\begin{proof}

To begin with, it follows from  \eqref{smo} that the following uniform H\"{o}rmander inequality holds:
\begin{align*}
\int_{\rho(x)>2A_0\rho(y)}\sup\limits_{k\in\mathbb{Z}}|K_{j}^{\alpha,N}\ast \Delta[2^{k}]\phi (y^{-1}x)-K_{j}^{\alpha,N}\ast\Delta[2^{k}]\phi (x)|dx
%&\lesssim\int_{\rho(x)>2A_0\rho(y)}\frac{\omega_{j}(\rho(y)/\rho(x))}{\rho(x)^{\mathbb{Q}}}dx
\lesssim \|\omega_j\|_{{\rm Dini}}\lesssim(1+N(j))\|\omega\|_{L^q(\Sigma)}.
\end{align*}
Then the estimate \eqref{TT} is a standard argument of Calder\'{o}n-Zygmund decomposition (see for example \cite{JDbook,s93}).

Next, we verify \eqref{MM}. Let $x$, $\xi\in B:=B(x_{0},r)$. Let $B_{x}$ be the closed ball centered at $x$ with radius $4(A_{0}^{2}+C_{A_{0}})r$. Then $C_{A_{0}}B\subset B_{x}$, and we obtain
\begin{align*}
&\big|\sup\limits_{k\in\mathbb{Z}}|\tilde{T}_{j}^{\alpha,N}(f\chi_{\HH\backslash C_{A_{0}}B})\ast\Delta[2^{k}]\phi(\xi)|\big|\\
&\leq \big|\sup\limits_{k\in\mathbb{Z}}|\tilde{T}_{j}^{\alpha,N}(f\chi_{\HH\backslash B_{x}})\ast\Delta[2^{k}]\phi(\xi)|-\sup\limits_{k\in\mathbb{Z}}|\tilde{T}_{j}^{\alpha,N}(f\chi_{\HH\backslash B_{x}})\ast\Delta[2^{k}]\phi(x)|\big|\\
&\quad+\sup\limits_{k\in\mathbb{Z}}|\tilde{T}_{j}^{\alpha,N}(f\chi_{B_{x}\backslash C_{A_{0}}B})\ast\Delta[2^{k}]\phi(\xi)|+\sup\limits_{k\in\mathbb{Z}}|\tilde{T}_{j}^{\alpha,N}(f\chi_{\HH\backslash B_{x}})\ast\Delta[2^{k}]\phi(x)|=:{\rm I+II+III}.
\end{align*}

\textbf{Estimate of ${\rm I}$.} By \eqref{smo2},
\begin{align}\label{001}
{\rm I}&\leq \int_{\HH\backslash B_{x}}\sup\limits_{k\in\mathbb{Z}}|K_{j}^{\alpha,N}\ast \Delta[2^{k}]\phi (y^{-1}\xi)-K_{j}^{\alpha,N}\ast \Delta[2^{k}]\phi(y^{-1}x)||f(y)|dy\nonumber\\
&\lesssim\int_{d(x,y)>4A_{0}^{2}r}\frac{\omega_{j}(d(x,\xi)/d(x,y))}{d(x,y)^{\mathbb{Q}}}|f(y)|dy\nonumber\\
&\lesssim \sum_{\ell=0}^{\infty}\omega_{j}(2^{-\ell})(2^{\ell}r)^{-\mathbb{Q}}\int_{2^{\ell+2}A_{0}^{2}r\leq d(x,y)\leq 2^{\ell+3}A_{0}^{2}r}|f(y)|dy\lesssim \|\omega_{j}\|_{{\rm Dini}}Mf(x).
\end{align}

\textbf{Estimate of ${\rm II}$.} It can be verified directly that $d(x,y)\sim d(\xi,y)\sim r$ whenever $y\in B_{x}\backslash C_{A_{0}}B$. Then it follows from \eqref{siz} that
\begin{align}\label{002}
{\rm II}&\leq \int_{B_{x}\backslash C_{A_{0}}B}\sup\limits_{k\in\mathbb{Z}}|K_{j}^{\alpha,N}\ast \Delta[2^{k}]\phi (y^{-1}\xi)||f(y)|dy\nonumber\\
&\lesssim \|\Omega\|_{L^{q}(\Sigma)}\int_{ d(x,y)\sim r}\frac{1}{d(x,y)^{\mathbb{Q}}}|f(y)|dy\lesssim \|\Omega\|_{L^{q}(\Sigma)}Mf(x).
\end{align}

\textbf{Estimate of ${\rm III}$.} Note that
%since $\supp\Delta[2^{k}]\phi\subset \{x:\rho(x)\leq 2^{k}\}$. Therefore, by ???,
\begin{align*}
{\rm III}
&\leq \sup\limits_{k\in\mathbb{Z}}\left|\int_{d(x,y)\leq A_{0}\kappa_{0}2^{k+4}}K_{j}^{\alpha,N}\ast \Delta[2^{k}]\phi(y^{-1}x)(f\chi_{\HH\backslash B_{x}})(y)dy\right|\\
&\quad+ \sup\limits_{k\in\mathbb{Z}}\left|\int_{d(x,y)\geq A_{0}\kappa_{0}2^{k+4}}K_{j}^{\alpha,N}\ast \Delta[2^{k}]\phi(y^{-1}x)(f\chi_{\HH\backslash B_{x}})(y)dy\right|=:{\rm III_{1}+III_{2}}.
\end{align*}
For the term ${\rm III_{1}}$, it follows from \eqref{nmnm} that $|K_{j}^{\alpha,N}\ast\Delta[2^{k}]\phi (x)|\lesssim 2^{-\mathbb{Q}k}\|\Omega\|_{L^{q}(\Sigma)}$ whenever $\rho(x)\leq A_{0}\kappa_{0}2^{k+4}$. Hence,
\begin{align}\label{31}
{\rm III_{1}}\lesssim \|\Omega\|_{L^{q}(\Sigma)}\sup\limits_{k\in\mathbb{Z}}2^{-\mathbb{Q}k}\int_{d(x,y)\leq A_{0}\kappa_{0}2^{k+4}}|f(y)|dy\lesssim \|\Omega\|_{L^{q}(\Sigma)}Mf(x).
\end{align}
For the term ${\rm III_{2}}$,
\begin{align*}
{\rm III_{2}}&\leq
\sup\limits_{k\in\mathbb{Z}}\left|\int_{d(x,y)\geq A_{0}\kappa_{0}2^{k+4}}K_{j}^{\alpha,N}\ast (\Delta[2^{k}]\phi-\delta_{0})(y^{-1}x)(f\chi_{\HH\backslash B_{x}})(y)dy\right|\\
&\quad+\sup\limits_{k\in\mathbb{Z}}\left|\int_{d(x,y)\geq A_{0}\kappa_{0}2^{k+4}}K_{j}^{\alpha,N}(y^{-1}x)(f\chi_{\HH\backslash B_{x}})(y)dy\right|,
\end{align*}
where $\delta_{0}$ is the Dirac measure at 0. To continue, we first estimate $K_{j}^{\alpha,N}\ast(\Delta[2^{k}]\phi-\delta_{0})(x)$ for $\rho(x)\geq A_{0}\kappa_{0}2^{k+4}$. By the fact that $\int_{\HH}\Delta[2^{k}]\phi dx=1$  and \eqref{jkl},
\begin{align*}
|K_{j}^{\alpha,N}\ast(\Delta[2^{k}]\phi-\delta_{0})(x)|
&=\left|\int_{\HH}\Delta[2^{k}]\phi(y^{-1}x)K_{j}^{\alpha,N}(y)dy-K_{j}^{\alpha,N}(x)\right|\\
&=\left|\int_{\HH}\Delta[2^{k}]\phi(y^{-1}x)(K_{j}^{\alpha,N}(y)-K_{j}^{\alpha,N}(x))dy\right|\\
&\lesssim \int_{\HH}|\Delta[2^{k}]\phi(y^{-1}x)|\frac{\omega_{j}(2^{k}/\rho(x))}{\rho(x)^{\mathbb{Q}}}dy\lesssim \frac{\omega_{j}(2^{k}/\rho(x))}{\rho(x)^{\mathbb{Q}}}.
\end{align*}
Therefore,
\begin{align*}
{\rm III_{2}}&\lesssim \sup\limits_{k\in\mathbb{Z}}\int_{d(x,y)\geq A_{0}\kappa_{0}2^{k+4}}\frac{\omega_{j}(2^{k}/d(x,y))}{d(x,y)^{\mathbb{Q}}}|(f\chi_{\HH\backslash B_{x}})(y)|dy\\
&\quad+\sup\limits_{k\in\mathbb{Z}}\bigg|\int_{d(x,y)\geq \max\{A_{0}\kappa_{0}2^{k+4}, 4(A_{0}^{2}+C_{A_{0}})r\}}K_{j}^{\alpha,N}(y^{-1}x)f(y)dy\bigg|\lesssim\|\omega_{j}\|_{{\rm Dini}}Mf(x)+\tilde{T}_{j}^{\alpha,N,\#}f(x),
\end{align*}
where $\tilde{T}_{j}^{\alpha,N,\#}$ is the truncated maximal operator of $\tilde{T}_{j}^{\alpha,N}$ defined by
\begin{align}\label{maxdefi}
\tilde{T}_{j}^{\alpha,N,\#}f(x)=:\sup\limits_{\varepsilon>0}\left|\int_{d(x,y)>\varepsilon}K_j^{\alpha,N}(x,y)f(y)dy\right|.
\end{align}

This, along with \eqref{31}, implies
\begin{align}\label{003}
{\rm III}\lesssim(\|\Omega\|_{L^{q}(\Sigma)}+\|\omega_{j}\|_{{\rm Dini}})Mf(x)+\tilde{T}_{j}^{\alpha,N,*}f(x).
\end{align}

Combining the estimates \eqref{001}, \eqref{002} and \eqref{003}, we see that for any $x,\xi\in B$,
\begin{align*}
\sup\limits_{k\in\mathbb{Z}}|\tilde{T}_{j}^{\alpha,N}(f\chi_{\HH\backslash B_{x}})\ast\Delta[2^{k}]\phi(\xi)|\lesssim(\|\Omega\|_{L^{q}(\Sigma)}+\|\omega_{j}\|_{{\rm Dini}})Mf(x)+\tilde{T}_{j}^{\alpha,N,\#}f(x),
\end{align*}
which implies that
\begin{align*}
\mathcal{M}_{j}^{\alpha,N}f(x)\lesssim(\|\Omega\|_{L^{q}(\Sigma)}+\|\omega_{j}\|_{{\rm Dini}})Mf(x)+\tilde{T}_{j}^{\alpha,N,\#}f(x).
\end{align*}
By sparse domination theorem (see the arxiv version for more details),
\begin{align*}
|\tilde{T}_{j}^{\alpha,N,\#}f(x)|\lesssim (\|\Omega\|_{L^{q}(\Sigma)}+\|\omega_{j}\|_{{\rm Dini}}+\|\tilde{T}_{j}^{\alpha,N}\|_{2\rightarrow 2})\mathcal{A}_{\mathcal{S}}(|f|)(x).
\end{align*}
%where $\mathcal{A}_{\mathcal{S}}$ is the sparse operator (see for example \cite{LN}) satisfying
%\begin{align*}
%\|\mathcal{A}_{\mathcal{S}}f\|_{1,\infty}\lesssim\|f\|_{1}.
%\end{align*}
Moreover, it follows from Lemma \ref{ckt} that $\|\tilde{T}_{j}^{\alpha,N}\|_{2\rightarrow 2}\lesssim 2^{-\alpha N(j-1)}\|\Omega\|_{L^{q}(\Sigma)}\|f\|_{2}.$ Combining the above inequalities, the weak type (1,1) boundedness of the Hardy-Littlewood maximal operators and of sparse operators (see for example \cite{LN}) together, we see that
\begin{align*}
\|\mathcal{M}_{j}^{\alpha,N}f\|_{L^{1,\infty}}\lesssim \|\Omega\|_{L^{q}(\Sigma)}(1+N(j))\|f\|_{1},
\end{align*}
which verifies \eqref{MM}. This finishes the proof of Lemma \ref{twoweak}.
\end{proof}
\subsection{Proof of Theorem \ref{main2}}
In this subsection, we modify the ideas in Section \ref{Basic reduction} to give the proof of Theorem \ref{main2}. To begin with, recall from \eqref{maximalcontrol} that
\begin{align*}
T_{\Omega,\alpha}^{\#}f(x)&\leq M_{\Omega,\alpha}f(x)+\sup\limits_{k\in\mathbb{Z}}|T_{\Omega,\alpha}^{k}f(x)|.
\end{align*}
Denote $v_{\alpha,\varepsilon}(x):=\frac{\Omega(x)}{\rho(x)^{\mathbb{Q}+\alpha}}\chi_{\varepsilon< \rho(x)\leq 2^{[\log \varepsilon]+1}}$.  If $\alpha=0$, then it is direct that
\begin{align}\label{xixixi}
|v_{\alpha,\varepsilon}(x)|\lesssim \|\Omega\|_{L^{\infty}(\Sigma)}\varepsilon^{-\mathbb{Q}}\chi_{\rho(x)\leq 2^{[\log \varepsilon]+1}}\lesssim \|\Omega\|_{L^{\infty}(\Sigma)}2^{-\mathbb{Q}[\log \varepsilon]}\chi_{\rho(x)\leq 2^{[\log \varepsilon]+1}}.
\end{align}
If $0<\alpha<\mathbb{Q}$, then similar to the proofs of \eqref{caaa1} and \eqref{caaa2}, we get that
\begin{align}\label{yiyiyi}
|R^{\alpha}\ast v_{\alpha,\varepsilon}(x)|\lesssim \|\Omega\|_{L^{q}(\Sigma)}2^{-\mathbb{Q}[\log \varepsilon]}\chi_{\rho(x)\leq A_{0}^{3}\kappa_{[\alpha]}2^{[\log \varepsilon]+5}}+ \frac{2^{([\alpha]+1-\alpha)[\log \varepsilon]}}{\rho(x)^{\mathbb{Q}+1+[\alpha]-\alpha}}\|\Omega\|_{L^{1}(\Sigma)}\chi_{\rho(x)\geq A_{0}^{3}\kappa_{[\alpha]}2^{[\log \varepsilon]+5}}.
\end{align}
In both cases, the above inequalities imply that for $0\leq\alpha<\mathbb{Q}$,
\begin{align*}
|M_{\Omega,\alpha}(-\Delta_{\HH})^{-\alpha/2}f(x)|=\sup\limits_{\varepsilon>0}|f\ast R^{\alpha}\ast v_{\alpha,\varepsilon}(x)|\lesssim\|\Omega\|_{L^{q}(\Sigma)}Mf(x).
\end{align*}
Then by the sharp weighted boundedness of the Hardy--Littlewood maximal operator $M$ (see for example \cite[Corollary 1.10]{HP}), for any $1<p<\infty$,
 \begin{align}\label{HLM}
 \|Mf\|_{L^p(w)}\lesssim \{w\}_{A_{p}}\|f\|_{L^{p}(w)},
 \end{align}
and therefore, \begin{align}\label{a0}\|M_{\Omega,\alpha}f\|_{L^p(w)}\lesssim  \|\Omega\|_{L^{q}(\Sigma)}\{w\}_{A_p}\| f\|_{L_\alpha^p(w)}.\end{align}
Hence, to prove Theorem \ref{main2}, it suffices to show the $(L_{\alpha}^p(w), L^p(w))$ boundedness of $\sup\limits_{k\in\mathbb{Z}}|T_{\Omega,\alpha}^{k}f(x)|$.
To this end, we define the smooth truncated kernel by
\begin{align*}
K_{\alpha,k}(x):=K_{\alpha}(x)\int_{-\infty}^{+\infty}t\varphi(t)\chi_{\rho(x)\geq 2^{k+1} t}(x)dt,
\end{align*}
and the corresponding smooth truncated singular integral operator by
\begin{align}\label{tildeT}
\tilde{T}_{\Omega,\alpha}^{k}f(x):=f\ast K_{\alpha,k}(x).
\end{align}
%and the corresponding smooth truncated maximal singular integral operator by
%\begin{align*}
%\tilde{T}_{\Omega,\alpha}^{*}f(x)=\sup\limits_{k\in\mathbb{Z}}|\tilde{T}_{\Omega,\alpha}^{k}f(x)|.
%\end{align*}

We next show that the weighted estimate of $\sup\limits_{k\in\mathbb{Z}}|T_{\Omega,\alpha}^{k}f(x)|$ is equivalent to that of $\sup\limits_{k\in\mathbb{Z}}|\tilde{T}_{\Omega,\alpha}^{k}f(x)|$ with the same bound. To this end, we note that $K_{\alpha,k}(x)=cK_{\alpha}(x)\chi_{\rho(x)\geq 2^{k+1}}(x)$ when $\rho(x)\geq 2^{k+2}$ or $\rho(x)\leq 2^{k}$, where $c:=\int_{-\infty}^{+\infty}t\varphi(t)dt$ and that $K_{\alpha,k}(x)-cK_{\alpha}(x)\chi_{\rho(x)\geq 2^{k+1}}(x)$ satisfies the cancellation condition of order $[\alpha]$. Hence, if $\alpha=0$, then it is direct that
\begin{align}\label{xixixi}
|K_{\alpha,k}(x)-cK_{\alpha}(x)\chi_{\rho(x)\geq 2^{k+1}}(x)|\lesssim \|\Omega\|_{L^{\infty}(\Sigma)}2^{-\mathbb{Q}k}\chi_{\rho(x)\leq 2^{k+2}}.
\end{align}
If $0<\alpha<\mathbb{Q}$, then similar to the proofs of \eqref{caaa1} and \eqref{caaa2}, we have
\begin{align*}
&|R^{\alpha}\ast\big(K_{\alpha,k}(x)-cK_{\alpha}(x)\chi_{\rho(x)\geq 2^{k+1}}(x)\big)|\\
&\lesssim \|\Omega\|_{L^{q}(\Sigma)}2^{-\mathbb{Q}k}\chi_{\rho(x)\leq A_{0}^{3}\kappa_{[\alpha]}2^{k+5}}+\|\Omega\|_{L^{1}(\Sigma)}\frac{2^{([\alpha]+1-\alpha)k}}{\rho(x)^{\mathbb{Q}+1+[\alpha]-\alpha}}\chi_{\rho(x)\geq A_{0}^{3}\kappa_{[\alpha]}2^{k+5}}.
\end{align*}
In both cases,
\begin{align*}
|\tilde{T}_{\Omega,\alpha}^{k}(-\Delta_{\HH})^{-\alpha/2}f(x)-cT_{\Omega,\alpha}^{k}(-\Delta_{\HH})^{-\alpha/2}f(x)|
\lesssim\|\Omega\|_{L^{q}(\Sigma)}Mf(x).
\end{align*}
This, in combination with the inequality \eqref{HLM}, implies that to prove Theorem \ref{main2}, it suffices to obtain the $(L_{\alpha}^p(w), L^p(w))$ boundedness of $\sup\limits_{k\in\mathbb{Z}}|\tilde{T}_{\Omega,\alpha}^{k}f(x)|$. Observe that
\begin{align*}
\tilde{T}_{\Omega,\alpha}^{k}f(x)= T_{\Omega,\alpha}f\ast\Delta[2^{k}]\phi-\sum_{s=-\infty}^{0}f\ast A_{k+s}^{\alpha}K_{\alpha}^{0}\ast\Delta[2^{k}]\phi+\sum_{s=1}^{\infty}f\ast A_{k+s}^{\alpha}K_{\alpha}^{0}\ast(\delta_{0}-\Delta[2^{k}]\phi).
\end{align*}
From this equality we see that
\begin{align}\label{iiiiii}
&\Big\|\sup\limits_{k\in\mathbb{Z}}|\tilde{T}_{\Omega,\alpha}^{k}(-\Delta_{\HH})^{-\alpha/2}f(x)|\Big\|_{L^{p}(w)}\nonumber\\
&\leq \Big\|\sup\limits_{k\in\mathbb{Z}}|T_{\Omega,\alpha}(-\Delta_{\HH})^{-\alpha/2}f\ast\Delta[2^{k}]\phi|\Big\|_{L^{p}(w)}
+\Big\|\sup\limits_{k\in\mathbb{Z}}\Big|\sum_{s=-\infty}^{0}f\ast R^{\alpha}\ast A_{k+s}^{\alpha}K_{\alpha}^{0}\ast\Delta[2^{k}]\phi\Big|\Big\|_{L^{p}(w)}\nonumber\\
&+\Big\|\sup\limits_{k\in\mathbb{Z}}\Big|\sum_{s=1}^{\infty}f\ast R^{\alpha}\ast A_{k+s}^{\alpha}K_{\alpha}^{0}\ast(\delta_{0}-\Delta[2^{k}]\phi)\Big|\Big\|_{L^{p}(w)}=:{\rm I+II+III}.
\end{align}

\textbf{Estimate of ${\rm I}$.} By Proposition \ref{discre}, we see that for $q>\mathbb{Q}/\alpha$, $1<p<\infty$ and $w\in A_{p}$,
\begin{align*}
{\rm I}\lesssim\|\Omega\|_{L^{q}(\Sigma)}\{w\}_{A_p}(w)_{A_p}\|f\|_{L^{p}(w)}.
\end{align*}

\textbf{Estimate of ${\rm II}$.} Observe that
\begin{align*}
\sum_{s=-\infty}^{0}A_{k+s}^{\alpha}K_{\alpha}^{0}=K_{\alpha}(x)\int_{-\infty}^{+\infty}t\varphi(t)\chi_{\rho(x)\leq 2^{k+1} t}(x)dt.
\end{align*}
Note that \eqref{fjfjfjfj}-\eqref{estimateI222} hold with $B_{k+s}^{\alpha}\Omega$ replaced by $A_{k+s}^{\alpha}K_{\alpha}^{0}$. In particular, if $0<\alpha<\mathbb{Q}$,
\begin{align}\label{mite}
\sup\limits_{k\in\mathbb{Z}}\Big|\sum_{s=-\infty}^{0}f\ast R^{\alpha}\ast A_{k+s}^{\alpha}K_{\alpha}^{0}\ast\Delta[2^{k}]\phi(x)\Big|\lesssim \|\Omega\|_{L^{1}(\Sigma)}Mf(x).
\end{align}
For the case $\alpha=0$, by the 0-order cancellation property of $\Omega$ and the mean value theorem on homogeneous groups,
\begin{align*}
\Big|\sum_{s=-\infty}^{0}A_{k+s}^{0}K_{0}^{0}\ast\Delta[2^{k}]\phi(x)\Big|
&=2^{-\mathbb{Q}k}\bigg|\int_{\HH}\sum_{s=-\infty}^{0}A_{k+s}^{0}K_{0}^{0}(y)\big(\phi(2^{-k}\circ(y^{-1}x))-\phi(2^{-k}\circ x)\big)dy\bigg|\\
&\lesssim 2^{-(\mathbb{Q}+1)k}\int_{\rho(y)\leq 2^{k+2}}\frac{\Omega(y)}{\rho(y)^{\mathbb{Q}-1}}dy\chi_{\rho(x)\leq A_{0}2^{k+3}}\lesssim 2^{-\mathbb{Q}k}\|\Omega\|_{L^{1}(\Sigma)}\chi_{\rho(x)\leq A_{0}2^{k+3}},
\end{align*}
and therefore, \eqref{mite} also holds in this case. This, together with \eqref{HLM}, yields
\begin{align*}
{\rm II}\lesssim \|\Omega\|_{L^{1}(\Sigma)}\{w\}_{A_{p}}\|f\|_{L^{p}(w)}.
\end{align*}

\textbf{Estimate of ${\rm III}$.} We first claim that there exists a constant $\tau>0$, such that for any $s\geq 1$,
\begin{align}\label{makey}
\Big\|\sup\limits_{k\in\mathbb{Z}}|f\ast R^{\alpha}\ast A_{k+s}^{\alpha}K_{\alpha}^{0}\ast (\delta_{0}-\Delta[2^{k}]\phi)|\Big\|_{2}\lesssim 2^{-\tau s}\|\Omega\|_{L^{q}(\Sigma)}\|f\|_{2}.
\end{align}
To this end, by Khinchin's inequality and Lemma \ref{ckt},
\begin{align*}
&\Big\|\sup\limits_{k\in\mathbb{Z}}|f\ast R^{\alpha}\ast A_{k+s}^{\alpha}K_{\alpha}^{0}\ast (\delta_{0}-\Delta[2^{k}]\phi)|\Big\|_{2}
\leq \Big\|\Big(\sum_{k\in\mathbb{Z}}|f\ast R^{\alpha}\ast A_{k+s}^{\alpha}K_{\alpha}^{0}\ast (\delta_{0}-\Delta[2^{k}]\phi)|^{2}\Big)^{1/2}\Big\|_{2}\\
&\lesssim \bigg\|\Big\|\sum_{k\in\mathbb{Z}}r_{k}(t)f\ast R^{\alpha}\ast A_{k+s}^{\alpha}K_{\alpha}^{0}\ast (\delta_{0}-\Delta[2^{k}]\phi)\Big\|_{L^{2}([0,1])}\bigg\|_{2}\\
&\lesssim \sum_{\ell=0}^{\infty}\sup\limits_{t\in[0,1]}\Big\|\sum_{k\in\mathbb{Z}}r_{k}(t)f\ast R^{\alpha}\ast A_{k+s}^{\alpha}K_{\alpha}^{0}\ast \Psi_{k-\ell}\Big\|_{2}
= \sum_{\ell=0}^{\infty}\sup\limits_{t\in[0,1]}\Big\|\sum_{k\in\mathbb{Z}}r_{k-s}(t)f\ast R^{\alpha}\ast A_{k}^{\alpha}K_{\alpha}^{0}\ast \Psi_{k-s-\ell}\Big\|_{2}\\
&\lesssim\sum_{\ell=0}^{\infty} 2^{-\tau(\ell+s)}\|\Omega\|_{L^{q}(\Sigma)}\|f\|_{2}\lesssim 2^{-\tau s}\|\Omega\|_{L^{q}(\Sigma)}\|f\|_{2}.
\end{align*}

Next we show that
\begin{align}\label{4546}
\Big\|\sup\limits_{k\in\mathbb{Z}}|f\ast R^{\alpha}\ast A_{k+s}^{\alpha}K_{\alpha}^{0}\ast (\delta_{0}-\Delta[2^{k}]\phi)|\Big\|_{L^{p}(w)}\lesssim \|\Omega\|_{L^{q}(\Sigma)}\{w\}_{A_{p}}\|f\|_{L^{p}(w)}.
\end{align}
To begin with, we note that
\begin{align}\label{supde}
\sup\limits_{k\in\mathbb{Z}}|f\ast R^{\alpha}\ast A_{k+s}^{\alpha}K_{\alpha}^{0}\ast (\delta_{0}-\Delta[2^{k}]\phi)(x)|
&\leq \sup\limits_{k\in\mathbb{Z}}|f\ast R^{\alpha}\ast A_{k+s}^{\alpha}K_{\alpha}^{0}(x)|\nonumber\\
&\quad+\sup\limits_{k\in\mathbb{Z}}|f\ast R^{\alpha}\ast A_{k+s}^{\alpha}K_{\alpha}^{0}\ast \Delta[2^{k}]\phi(x)|.
\end{align}
To continue, for the case $\alpha=0$, by \eqref{indd} with $k$ replaced by $k+s$,
\begin{align}\label{mnbj}
|A_{k+s}^{0}K_{0}^{0}(x)|\lesssim 2^{-\mathbb{Q}(k+s)}\|\Omega\|_{L^{\infty}(\Sigma)}\chi_{\rho(x)\leq 2^{k+s+2}},
\end{align}
which implies that
\begin{align}\label{ewr2}
|A_{k+s}^{0}K_{0}^{0}\ast \Delta[2^{k}]\phi(x)|\lesssim \|\Omega\|_{L^{\infty}(\Sigma)}\int_{\rho(y)\leq 2^{k}}2^{-\mathbb{Q}(k+s)}\chi_{d(x,y)\leq 2^{k+s+2}}|\Delta[2^{k}]\phi(y)|dy.
\end{align}
Moreover, if $0<\alpha<\mathbb{Q}$, then by \eqref{caaa1} and \eqref{caaa2} with $k$ replaced by $k+s$, we have
\begin{align}\label{mnbj2}
|R^{\alpha}\ast A_{k+s}^{\alpha}K_{\alpha}^{0}(x)|\lesssim 2^{-\mathbb{Q}(k+s)}\|\Omega\|_{L^{q}(\Sigma)}\chi_{\rho(x)\leq A_{0}^{3}\kappa_{[\alpha]}2^{k+s+5}}+\frac{2^{([\alpha]+1-\alpha)(k+s)}}{\rho(x)^{\mathbb{Q}+1+[\alpha]-\alpha}}\|\Omega\|_{L^{1}(\Sigma)}\chi_{\rho(x)\geq A_{0}^{3}\kappa_{[\alpha]}2^{k+s+5}},
\end{align}
which implies that
\begin{align}\label{jkl1}
&| R^{\alpha}\ast A_{k+s}^{\alpha}K_{\alpha}^{0}\ast \Delta[2^{k}]\phi(x)|\nonumber\\
&\lesssim \|\Omega\|_{L^{q}(\Sigma)}\int_{\rho(y)\leq 2^{k}}2^{-\mathbb{Q}(k+s)}\chi_{d(x,y)\leq A_{0}^{3}\kappa_{[\alpha]} 2^{k+s+5}}|\Delta[2^{k}]\phi(y)|dy\nonumber\\
&\qquad+ \|\Omega\|_{L^{1}(\Sigma)}\int_{\rho(y)\leq 2^{k}}\frac{2^{([\alpha]+1-\alpha)(k+s)}}{d(x,y)^{\mathbb{Q}+1+[\alpha]-\alpha}}\chi_{d(x,y)>A_{0}^{3}\kappa_{[\alpha]} 2^{k+s+5}}|\Delta[2^{k}]\phi(y)|dy\nonumber\\
&\lesssim \|\Omega\|_{L^{q}(\Sigma)}2^{-\mathbb{Q}(k+s)}\chi_{\rho(x)\leq A_{0}^{4}\kappa_{[\alpha]}  2^{k+s+6}}(x)+\|\Omega\|_{L^{q}(\Sigma)}\frac{2^{([\alpha]+1-\alpha)(k+s)}}{\rho(x)^{\mathbb{Q}+1+[\alpha]-\alpha}}\chi_{\rho(x)\geq A_{0}^{2}\kappa_{[\alpha]} 2^{k+s+4}}(x).
\end{align}
In both cases, combining \eqref{mnbj} and \eqref{mnbj2}, we conclude that
\begin{align}\label{rr11}
\sup\limits_{k\in\mathbb{Z}}|f\ast R^{\alpha}\ast A_{k+s}^{\alpha}K_{\alpha}^{0}(x)|\lesssim \|\Omega\|_{L^{q}(\Sigma)}Mf(x).
\end{align}
Moreover, combining \eqref{ewr2} and \eqref{jkl1}, we conclude that
\begin{align}\label{rr22}
\sup\limits_{k\in\mathbb{Z}}|f\ast R^{\alpha}\ast A_{k+s}^{\alpha}K_{\alpha}^{0}\ast \Delta[2^{k}]\phi(x)|\lesssim \|\Omega\|_{L^{q}(\Sigma)}Mf(x).
\end{align}
Hence, it follows from \eqref{HLM}, \eqref{supde}, \eqref{rr11} and \eqref{rr22} that \eqref{4546} holds.

Now we back to the estimate of ${\rm I_{3}}$.
It follows from \eqref{4546} with $w$ replaced by $w^{1+\epsilon}$, where $\epsilon=\frac{1}{2}c_{\mathbb{Q}}/(w)_{A_{p}}$, that
\begin{align}\label{4547}
\Big\|\sup\limits_{k\in\mathbb{Z}}|f\ast R^{\alpha}\ast A_{k+s}^{\alpha}K_{\alpha}^{0}\ast (\delta_{0}-\Delta[2^{k}]\phi)|\Big\|_{L^{p}(w^{1+\epsilon})}\lesssim \|\Omega\|_{L^{q}(\Sigma)}\{w\}_{A_{p}}^{1+\epsilon}\|f\|_{L^{p}(w^{1+\epsilon})}.
\end{align}
Now interpolating between \eqref{makey} and \eqref{4547} with change of measures (\cite[Theorem 2.11]{SW}), we obtain that there exists a constant $\tau>0$ such that
\begin{align*}
\Big\|\sup\limits_{k\in\mathbb{Z}}|f\ast R^{\alpha}\ast A_{k+s}^{\alpha}K_{\alpha}^{0}\ast (\delta_{0}-\Delta[2^{k}]\phi)|\Big\|_{L^{p}(w)}\lesssim \|\Omega\|_{L^{q}(\Sigma)}2^{-\tau s/(w)_{A_{p}}}\{w\}_{A_{p}}\|f\|_{L^{p}(w)}.
\end{align*}
%Equivalently,
%\begin{align*}
%\|\sup\limits_{k\in\mathbb{Z}}|f\ast A_{k+s}^{\alpha}K_{\alpha}^{0}\ast (\delta_{0}-\Delta[2^{k}]\phi)|\|_{L^{p}(w)}\lesssim \|\Omega\|_{q}2^{-\theta s/(w)_{A_{p}}}\{w\}_{A_{p}}\|f\|_{L_{\alpha}^{p}(w)}.
%\end{align*}
Therefore,
\begin{align*}
{\rm III}
&\leq \sum_{s=1}^{\infty}\Big\|\sup\limits_{k\in\mathbb{Z}}\Big|f\ast R^{\alpha}\ast A_{k+s}^{\alpha}K_{\alpha}^{0}\ast(\delta_{0}-\Delta[2^{k}]\phi)\Big|\Big\|_{L^{p}(w)}\\
&\leq \|\Omega\|_{L^{q}(\Sigma)}\sum_{s=1}^{\infty}2^{-\tau s/(w)_{A_{p}}}\{w\}_{A_{p}}\|f\|_{L^{p}(w)}\leq \|\Omega\|_{L^{q}(\Sigma)}\{w\}_{A_{p}}(w)_{A_{p}}\|f\|_{L^{p}(w)}.
\end{align*}

Finally, by combining the estimates of ${\rm I}$, ${\rm II}$ and ${\rm III}$, the proof of Theorem \ref{main2} is complete.
\hfill $\square$
\bigskip

\bigskip

 \noindent
 {\bf Acknowledgements:}

The authors would like to express their sincere gratitude to the referees for their careful reading, patient reviewing, valuable corrections and constructive comments, which greatly improved the exposition of manuscript. The authors would like to thank Kangwei Li for helpful discussions on the sparse domination for maximal operators. Z. Fan would also like to thank Prof. Lixin Yan for useful discussions.  Y. Chen is supported by the National Natural Science Foundation of China, Grant No.  ~11871096 and ~11471033.
J. Li is supported by the Australian Research Council (ARC) through the
research grant DP220100285.

\end{document}